\newtheorem{theorem}{Theorem}[section]
\newtheorem{lemma}[theorem]{Lemma}
\newtheorem{proposition}[theorem]{Proposition}
\newtheorem{corollary}[theorem]{Corollary}
\theoremstyle{remark}
\newtheorem{remark}[theorem]{Remark}
\newtheorem{example}[theorem]{Example}
\numberwithin{equation}{section}
\newcommand{\T}{\mathscr{F}}
\newcommand{\R}{\mathscr{R}}
\newcommand{\Mbar}{\overline{M}}
\newcommand{\vir}{\operatorname{vir}}
\newcommand{\ev}{\operatorname{ev}}
\newcommand{\ft}{\operatorname{ft}}
\newcommand{\st}{\operatorname{st}}
\newcommand{\on}{\operatorname}
\newcommand{\s}{s}
\newcommand{\e}{\varepsilon}
\newcommand{\q}{q}
\newcommand{\bb}{\mathbb}
\newcommand{\scr}{\mathscr}
\newcommand{\N}{NE_f}
\newcommand{\f}{\mathscr{G}}
\title[Invariance of GW theory under a simple flop]
{Invariance of Gromov--Witten theory\\ under a simple flop}
\author{Y.~Iwao}
\author{Y.-P.~Lee}
\address{Y.~Iwao and Y.-P.~Lee: Department of Mathematics, University of Utah,
Salt Lake City, Utah 84112-0090, U.S.A.} 
\email{yshr@math.utah.edu}
\email{yplee@math.utah.edu}
\author{H.-W.~Lin}
\author{C.-L.~Wang}
\address{H.-W.~Lin and C.-L.~Wang: Department of Mathematics and
The NCU Center for Mathematics and Theoretic Physics,
National Central University,
Jhongli 32001, Taiwan; 
National Center for Theoretical Sciences (NCTS),
Hsinchu 30013, Taiwan.} 
\email{linhw@math.ncu.edu.tw}
\email{dragon@math.cts.nthu.edu.tw; dragon@math.ncu.edu.tw}
\begin{document}
\maketitle

\begin{abstract}
We show that the generating functions of Gromov--Witten invariants
\emph{with ancestors} are invariant under a simple flop, for all
genera, after an analytic continuation in the extended K\"ahler
moduli space. This is a sequel to \cite{LLW}.
\end{abstract}

\setcounter{section}{-1}

\section{Introduction}

\subsection{Statement of the main results}

Let $X$ be a smooth complex projective manifold and $\psi:X \to
\bar X$ a flopping contraction in the sense of minimal model
theory, with $\bar\psi: Z \cong \mathbb{P}^r \to pt$ the
restriction map to the extremal contraction. Assume that $N_{Z/X}
\cong \mathscr{O}_{\mathbb{P}^r}(-1)^{\oplus(r + 1)}$. It was
shown in \cite{LLW} that a simple $\mathbb{P}^r$ flop $f: X
\dashrightarrow X'$ exists and the graph closure $[\bar\Gamma_f]
\in A^*(X \times X')$ induces a correspondence $\T$ which
identifies the Chow motives $\hat{X}$ of $X$ and $\hat{X'}$ of
$X'$. Furthermore, the big quantum cohomology rings, or
equivalently genus zero Gromov--Witten invariants with $3$ or more
insertions, are invariant under a simple flop, after an analytic
continuation in the extended K\"ahler moduli space.

The goal of the current paper is to extend the results of \cite{LLW} 
to all genera. In the process we discovered the natural framework in the
\emph{ancestor} potential
\begin{equation*}
 \mathscr{A}_X (\bar{t}, \s) := \exp {\sum_{g=0}^{\infty}
 \hbar^{g - 1} \overline{F}_g^X(\bar{t}, \s)},
\end{equation*}
which is a formal series in the Novikov variables
$\{q^\beta\}_{\beta \in NE(X)}$ defined in the stable range $2g +
n \ge 3$. See Section~\ref{s:1} for the definitions.

The main results of this paper are the following theorems.

\begin{theorem} \label{t:main}
The total ancestor potential $\mathscr{A}_X$
(resp.~$\mathscr{A}_{X'}$) is analytic in the extremal ray
variable $q^\ell$ (resp.~$q^{\ell'}$). They are identified via
$\T$ under a simple flop, after an analytic continuation in the
extended K\"ahler moduli space $\omega \in H^{1,1}_{\mathbb{R}}(X)
+ i(\mathcal{K}_X \cup \T^{-1}\mathcal{K}_{X'})$ via
$$
q^\ell = e^{2\pi i (\omega.\ell)},
$$
where $\mathcal{K}_X$ (resp.~$\mathcal{K}_{X'}$) is the K\"ahler
cone of $X$ (resp.~$X'$).
\end{theorem}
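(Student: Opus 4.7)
The plan is to combine the genus-zero invariance established in \cite{LLW} with Givental--Teleman reconstruction of higher-genus theory, using the ancestor formalism as the natural bridge. The argument proceeds in three steps, followed by a technical subtlety near the wall.

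First, I would reduce to the extremal ray. Splitting the sum defining $\overline{F}_g^X$ by curve class $\beta = \beta_0 + d\ell$ (with $\ell$ spanning the ray contracted by $\psi$), the contributions with $\beta_0 \neq 0$ match termwise under $\T$: the correspondence is supported on the graph of the birational isomorphism $X \setminus Z \cong X' \setminus Z'$ away from $Z$, and ancestor classes pulled back from $\Mbar_{g,n}$ are insensitive to the change in target geometry. The substantive content thus concentrates in the pure extremal series $\sum_{d \ge 1} N_{g,n,d}\, q^{d\ell}$.

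Second, I would establish analyticity by a computation on the local model $X_{\on{loc}} = \on{Tot}(\mathscr{O}_{\mathbb{P}^r}(-1)^{\oplus(r+1)})$. Equivariant localization for the fiber-scaling $(\mathbb{C}^*)^{r+1}$-action expresses each $N_{g,n,d}$ as an integral on $\Mbar_{g,n}(\mathbb{P}^r, d)$; summing over $d$ produces a rational function of $q^\ell$, generalizing the Aspinwall--Morrison-type denominator found at genus zero in \cite{LLW}. This establishes analyticity in a neighborhood of $q^\ell = 0$ and makes the involution $q^\ell \leftrightarrow q^{\ell'}$ induced by the flop explicit.

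Third, I would lift the genus-zero identification to all genera via Givental's quantization. At a semisimple point of the Frobenius manifold of $X$, Givental--Teleman yields $\mathscr{A}_X = \widehat{R}_X \prod_i \tau_{\mathrm{KW}}$ with $R_X(z)$ uniquely determined by the genus-zero Frobenius structure and the Euler field. Since \cite{LLW} identifies these Frobenius structures via $\T$ after analytic continuation, $R_X$ and $R_{X'}$ are intertwined by $\T$, and the identification of the ancestor potentials follows.

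The hardest part will be controlling the non-semisimple locus, particularly on and near the wall in the extended K\"ahler moduli space where the analytic continuation takes place. Givental--Teleman reconstruction is a priori valid only on the semisimple open stratum, so one must either choose a semisimple base point and propagate the identification by analyticity of $\mathscr{A}_X$ in the ancestor variables, or invoke tautological relations to extend the identification across the wall. The delicate interplay between semisimplicity of the Frobenius manifold and the analytic continuation in $q^\ell$ is likely where the technical core of the argument lies.
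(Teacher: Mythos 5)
Your Step~3 (Givental--Teleman quantization to lift the genus-zero identification to all genera) is the heart of the paper's argument for the local models (Proposition~\ref{p:local}, Theorem~\ref{t:local}), and your concern about semisimplicity is addressed directly by Lemma~\ref{s-s}, which verifies that the small quantum cohomology of the local model $\mathbb{P}_{\mathbb{P}^r}(\mathscr{O}(-1)^{r+1}\oplus\mathscr{O})$ is semisimple without appealing to equivariance. The analyticity is not established by summing localization integrals as in your Step~2, but by the algebraic observation (Theorem~\ref{QH-inv}, Section~\ref{s:3.2}) that the genus-zero $n$-point functions land in the ring $\R_{loc}=\mathbb{C}[NE][\f]$, where $\f$ is the single rational function satisfying the functional equation $\T\f(q^\ell)=(-1)^r-\f(q^{\ell'})$; quantization, being an algebraic procedure, never leaves an integral extension of this ring. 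These are different but reasonable routes.

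Your Step~1, however, contains a genuine gap. You propose splitting $\beta=\beta_0+d\ell$ and claiming the $\beta_0\ne 0$ contributions ``match termwise under $\T$'' because the correspondence is an isomorphism away from $Z$. This does not work: the curve class $\beta_0+d\ell$ is not a geometric decomposition, and $\Mbar_{g,n}(X,\beta_0+d\ell)$ contains stable maps with components mapping into $Z$ in arbitrary ways, so the corresponding virtual integrals do depend sensitively on the geometry near $Z$. Moreover, $\T\ell=-\ell'$, so $\T$ does not send the Novikov monomial $q^{\beta_0+d\ell}$ to $q^{\T\beta_0+d\ell'}$; the mixing of extremal and non-extremal degrees under $\T$ is precisely why analytic continuation is needed at all, and a termwise matching statement is false. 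What replaces this step in the paper is the full degeneration analysis of Section~\ref{s:degeneration}: one degenerates $X$ to the normal cone $W_0 = Y_1\cup_E Y_2$ with $Y_1={\rm Bl}_Z X$ and $Y_2=\tilde E$, applies the degeneration formula to express absolute invariants in terms of relative invariants of $(Y_1,E)$ and $(\tilde E,E)$ (Proposition~\ref{p:4.6}), and then reduces the relative local invariants to absolute mixed invariants of special type on $\tilde E$ (Proposition~\ref{p:4.8}). The contributions from $(Y_1,E)$ and $(Y_1',E')$ are literally equal because $Y_1\cong Y_1'$; only the $(\tilde E,E)$ relative invariants require the invariance theorem. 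Without this mechanism, you have no valid reduction to the local model, and the subsequent quantization argument has nothing to bite on.

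A secondary issue: Theorem~\ref{t:main} is proved in the paper as a special case of Theorem~\ref{t:main-2}, and the degeneration formula forces one to deal with \emph{relative} invariants and hence with descendent insertions at the infinity divisor even if one only wants the pure ancestor statement. Your plan, by avoiding the degeneration step, also avoids this necessity, but only because the step it uses instead is unsound. Any correct version of your argument would have to reintroduce the mixed invariants of $f$-special type and Proposition~\ref{p:1.1} relating descendents to ancestors.
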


There are extensive discussions of analytic continuation and 
the K\"ahler moduli in Section~\ref{s:3}.
We note that the \emph{descendent} potential is in general
\emph{not} invariant under $\T$ (c.f.~\cite{LLW}, \S3). The
descendents and ancestors are related via a simple transformation
(\cite{KM, aG2}, c.f.~Proposition \ref{p:1.1}), but the
transformation is in general not compatible with $\T$.
Nevertheless we do have

\begin{theorem} \label{t:main-2}
For a simple flop $f$, any generating function of mixed invariant of
$f$-special type
\[
\langle \tau_{k_1,\bar{l}_1} \alpha_1, \cdots,
\tau_{k_n,\bar{l}_n} \alpha_n \rangle_{g},
\]
with $2g + n \ge 3$, is invariant under $\T$ up to analytic
continuation.
\end{theorem}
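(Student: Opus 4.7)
The strategy is to reduce Theorem \ref{t:main-2} to Theorem \ref{t:main} by means of the ancestor-to-descendent transformation of Kontsevich--Manin and Givental (Proposition \ref{p:1.1}). The plan proceeds in three stages: (i) unpack the definition of \emph{$f$-special type} and record the features that make such insertions compatible with $\T$; (ii) show that on this class of insertions the ancestor-descendent transformation respects $\T$; and (iii) combine these with the invariance of $\mathscr{A}_X$ established in Theorem \ref{t:main}.

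For stage (i), the condition of being $f$-special should guarantee that each descendent part, when paired with the cohomology class $\alpha_i$, lies in a subspace on which the entries of the genus-zero fundamental solution $S(\q, z)$ of $X$ are preserved by $\T$ up to analytic continuation. This is automatic when all $k_i = 0$, since the invariant is then purely an ancestor invariant and already controlled by Theorem \ref{t:main}. For general $k_i$ it is an extension of the genus-zero statements of \cite{LLW}, which handle three-point invariants with one descendent insertion in the relevant cohomology classes.

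For stage (ii), Proposition \ref{p:1.1} expresses the descendent potential as the action of a quantization of $S^{-1}$ on the ancestor potential. Restricted to $f$-special insertions, the operator $S(\q, z)$ intertwines the two sides of the flop; consequently the mixed generating function on $X$ is obtained by applying a $\T$-compatible operator to the $\T$-invariant series $\mathscr{A}_X$. Matching this to the parallel computation on $X'$ gives the desired equality of generating functions after analytic continuation in $\q^\ell$.

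The principal obstacle I anticipate is pinning down the $f$-special class precisely enough to guarantee that it is simultaneously (a) broad enough to include the mixed invariants of interest, (b) preserved by the operations appearing in the quantized transformation --- in particular, under multiplication by entries of $S(\q, z)$ and under the comparison identities relating $\psi$-classes on $\Mbar_{g,n}(X,\beta)$ to $\bar\psi$-classes pulled back from $\Mbar_{g,n}$ --- and (c) compatible with analytic continuation in the extremal variable $\q^\ell$. Property (c), which requires uniform control of the continuation under quantization, is where I expect most of the technical work to concentrate; the remaining steps are then a bookkeeping reduction to Theorem \ref{t:main}.
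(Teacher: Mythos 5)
Your overall idea---reduce Theorem \ref{t:main-2} to the ancestor case via Proposition \ref{p:1.1}---does appear in the paper, but only at the level of the \emph{local model} $\tilde E$ (Theorem \ref{t:local}), and you are missing the mechanism that gets you from a general $X$ down to that local model. The paper's actual proof route is: (1) degeneration to the normal cone of $Z$ in $X$, Propositions \ref{p:4.6} and \ref{p:4.8}, which reduces $f$-special mixed invariants on $X$ to mixed invariants of special type on $\tilde E = \mathbb{P}_Z(N_{Z/X}\oplus\mathscr O)$; (2) the ancestor invariance on $\tilde E$, Proposition \ref{p:local}, via Givental--Teleman quantization; (3) the descendent-to-ancestor reduction, Proposition \ref{p:1.1}, carried out \emph{on the local model}, where the residual genus-zero two-point series $\langle\tau_k\alpha, T_\nu\rangle_0(\s)$ with $\alpha\in j_*H^*(E)$ can be handled by a dimension/degree decomposition ($d_2=0$ trivial, $d_2>0$ by \cite{LLW} Theorem 5.6).

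The concrete gap in your proposal is in stages (i)--(ii): you assert that $f$-speciality ($\alpha.Z=0$) makes the entries of the fundamental solution $S(\q,z)$, equivalently the genus-zero two-point descendent series $\langle\tau_k\alpha, T_\nu\rangle_0(\s)$, compatible with $\T$ up to analytic continuation \emph{on a general $X$}. This is exactly the non-trivial step, and the paper never establishes it directly on $X$; the argument in Theorem \ref{t:local} depends essentially on the local model geometry (the $\gamma$-degree $d_2$ of a curve class and the fact that $\alpha$ is supported on the infinity divisor). For a general $X$ the only route to such statements that the paper provides is through the degeneration formula, and this is not ``bookkeeping'': it is the step that localizes the flop to $\tilde E$. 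Until you either supply an independent argument for $\T$-covariance of $S$ on $f$-special classes on a general $X$, or incorporate the degeneration to the normal cone, the reduction you describe does not close. Note also that Theorem \ref{t:main} is stated in the paper as a \emph{special case} of Theorem \ref{t:main-2} and proved through the same degeneration machinery; citing it as an already established input for Theorem \ref{t:main-2} on general $X$ is therefore circular unless you first redo its proof independently.
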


Here a mixed insertion $\tau_{k,\bar{l}\,}\alpha$ consists of
descendents $\psi^k$ and ancestors $\bar{\psi}^l$. Given $f: X
\dasharrow X'$ with exceptional loci $Z \subset X$ and $Z' \subset
X'$, a mixed invariant is of $f$-special type if for every
insertion $\tau_{k, \bar{l}\,} \alpha$ with $ k \ge 1$ we have $\alpha.Z = 0$. 
The generating function is a summation of all degrees and number of marked
points. See section 1 for the definitions. 
Theorem~\ref{t:main} is a special case of Theorem \ref{t:main-2} when no
descendent is present.

\subsection{Outline of the contents}

Section~1 contains some basic definitions as well as special
terminologies in Gromov--Witten theory used in the article. One of
the main ingredients of our proof of invariance in the higher
genus theory is Givental's quantization formalism
\cite{aG2} for \emph{semisimple} Frobenius manifolds. This is
reviewed in Section~2.

Another main ingredient, in comparing Gromov--Witten theory of $X$
and $X'$, is the degeneration analysis. 
We generalize the genus zero results of the degeneration analysis in 
\cite{LLW} to ancestor potentials in all genera. 
The analysis allows us to reduce the proofs of Theorem \ref{t:main} 
(and \ref{t:main-2}) from flops of $X$ to flops of
the local model $\mathbb{P}(N_{Z/X} \oplus \mathscr{O})$.

To keep the main idea clear, we choose to work on local models
first in Section~3 and postpone the degeneration analysis till
section~4. The local models are semi-Fano toric varieties and
localizations had been effectively used to solve the genus zero
case. The idea is to utilize Givental's quantization formalism on
the local models to derive the invariance in higher genus, up to
analytic continuation, from our results \cite{LLW} in genus zero.
In doing so, the key point is that local models have
semisimple quantum cohomology, and we trace the effect of
analytic continuation carefully during the process of quantization.
The issues of the analyticity of the Frobenius manifolds and
the analytic continuation involved in this study is discussed in 
the beginning Section~3.

The proofs of our main results Theorem~\ref{t:main} and
\ref{t:main-2}, as well as the degeneration analysis, are
presented in section~4.

In section~5 we include some discussions and calculations of the higher
genus Gromov--Witten invariants attached to the extremal rays.
Similar to the $g = 0$ case, there is also a classical defect
occurring at $(g, n,d) = (1,1,0)$
$$
-\frac{1}{24}\big[(c_{{\rm top} - 1}(X).\alpha)_X - (c_{{\rm top}
- 1}(X').\T\alpha)_{X'} \big].
$$
Our explicit formula in Theorem~\ref{t:g1} 
for the $g = 1$ invariants attached to the
extremal ray is seen to give quantum corrections to it.

The calculation of the explicit formula in genus one requires some
elementary combinatorics, and is included in the appendix.

\subsection{Some remarks on the crepant resolution conjecture}

A morphism $\psi:X \to \bar{X}$ is called a \emph{crepant
resolution}, if $X$ is smooth and $\bar{X}$ is
$\mathbb{Q}$-Gorenstein (e.g.~an orbifold) such that $\psi^*
K_{\bar{X}} = K_X$. In the case $\bar{X}$ is an orbifold, there is
a well-defined orbifold Gromov--Witten theory due to Chen--Ruan.
The \emph{crepant resolution conjecture} asserts a close relation
between the Gromov--Witten theory of $X$ and that of $\bar{X}$.

Crepant resolution conjecture, as formulated in \cite{CR}, still
uses descendent potentials rather than the ancestor potentials, as
advocated in \cite{ypL}. Yet ancestors often enjoy better
properties than the corresponding descendents, as exploited by
Getzler \cite{eG}.

Since different crepant resolutions are related by a
$K$-equivalent transformation, e.g.~a flop, the conjecture must be
consistent with a transformation under a flop. Although the
descendent potentials can be obtained from ancestor potentials via
a simple transformation, this very transformation actually spoils
the invariance under $\T$. The insistence in the descendents may
introduce unnecessary complication in the formulation of
the conjecture. This is especially relevant in the stronger form
of the conjecture when the orbifolds satisfy the Hard Lefschetz
conditions.

Our result suggests that a more natural framework to study crepant
resolution conjecture is to use ancestors rather than descendents.
We leave the interested reader to consult \cite{CR} and references
therein.

\subsection{Acknowledgements}

Part of this work was done during the second author's visit to the
NCU Center for Mathematics and Theoretic Physics (CMTP), Jhongli,
Taiwan in November 2007. He is grateful to the Mathematics
Department of National Central University for the hospitality
during his stay.

\section{Descendent and ancestor potentials} \label{s:1}

\subsection{The ancestor potential}

For the stable range $2g + m \ge 3$, let
\[
 \pi: = \ft \circ \st: \Mbar_{g,m+l}(X,\beta) \to \Mbar_{g, m}
\]
be the composition of the \emph{stabilization morphism}
$\st: \Mbar_{g,m+l}(X,\beta) \to \Mbar_{g,m+l}$
defined by forgetting the map and the \emph{forgetful morphism}
$\ft: \Mbar_{g,m+l} \to \Mbar_{g,m}$
defined by forgetting the last $l$ points.
The \emph{gravitational ancestors} are defined to be
\begin{equation} \label{e:pi}
  \bar{\psi}_i := \pi^* \psi_i
\end{equation}
for $i=1,\ldots,m$.

Let $\{T_\mu\}$ be a basis of $H^*(X, \mathbb{Q})$. Denote
$\bar{t} = \sum_{\mu, k} \bar{t}_k^{\mu} \bar{\psi}^k T_{\mu}$,
$\s = \sum_{\mu} \s^{\mu} T_{\mu}$, and let
\[
 \begin{split}
   \overline{F}^X_g (\bar{t}, \s) = &\sum_{m,l,\beta}
   \frac{q^{\beta}}{m! l!}
   \langle {\bar{t}}^m, \s^l \rangle_{g, m+l, \beta} \\
   = &\sum_{m,l,\beta} \frac{q^{\beta}}{m! l!}
     \int_{[\Mbar_{g,m+l}(X,\beta)]^{\vir}}
     \prod_{i=1}^m \sum_{k, \mu} \bar{t}_{k}^{\mu} \bar{\psi}_i^k
     \ev_i^* T_{\mu}
     \prod_{i=m+1}^{m+l} \sum_{\mu} \s^{\mu} \ev_i^* T_{\mu}
 \end{split}
\]
be the generating function of genus $g$ ancestor invariants.
The ancestor potential is defined to be the formal expression
\begin{equation*}
 \mathscr{A}_X (\bar{t}, \s) := \exp {\sum_{g=0}^{\infty}
 \hbar^{g - 1} \overline{F}_g^X(\bar{t}, \s)}.
\end{equation*}
Note that $\mathscr{A}$ depends on $\s$ (variables on the
Frobenius manifold), in addition to $\bar{t} = \sum
\bar{t}_k^{\mu} T_{\mu} z^k$ (variables on the ``Fock space'').

Let $j$ be one of the first $m$ marked points such that
$\bar{\psi}_j$ is defined. Define $D_j$ be the (virtual) divisor
on $\Mbar_{g,m+l}(X,\beta)$ defined by the image of the gluing
morphism
\[
 \sum_{\beta'+\beta''=\beta} \sum_{l' + l''=l}
 \Mbar_{0, \{ j \}+l'+ \bullet}(X,\beta') \times_X
  \Mbar_{g,(m-1)+l''+ \bullet}(X,\beta'') \to \Mbar_{g,m+l}(X,\beta),
\]
where $\bullet$ represents the gluing point;
$\Mbar_{g,(m-1)+l''+\bullet}(X,\beta'')$ carries all first $m$
marked points \emph{except the $j$-th one}, which is carried by
$\Mbar_{0, \{ j \}+l'+\bullet}(X,\beta')$. Ancestor and descendent
invariants are related by the simple geometric equation
\begin{equation} \label{e:compare}
(\psi_j - \bar{\psi}_j) \cap [\Mbar_{g,m+l}(X,\beta)]^{\vir} =
[D_j]^{\vir}.
\end{equation}
This can be easily seen from the definitions of $\psi$ and $\bar{\psi}$.
The morphism $\pi$ in \eqref{e:pi} contracts only rational curves
during the processes of forgetful and stabilization morphisms. The
(virtual) difference of $\psi$ and $\bar{\psi}$ is exactly $D_j$.

\subsection{The mixed invariants}

We will consider more general \emph{mixed invariants} with mixed
ancestor and descendent insertions. Denote by
\[
\langle \tau_{k_1,\bar{l}_1} \alpha_1, \cdots,
\tau_{k_n,\bar{l}_n} \alpha_n \rangle_{g,n,\beta}
\]
the invariants with mixed descendent and ancestor insertion
$\psi_i^{k_i} \bar{\psi}_i^{l_i} \ev_i^* \alpha_i$ at the $i$-th
marked point and let
\[
  \langle \tau_{k_1,\bar{l}_1} \alpha_1, \cdots,
    \tau_{k_n,\bar{l}_n} \alpha_n \rangle_g (\s)
  := \sum_{l, \beta} \frac{q^{\beta}}{l!}
  \langle \tau_{k_1,\bar{l}_1} \alpha_1 , \cdots,
 \tau_{k_n,\bar{l}_n} \alpha_n, \s^l \rangle_{g,n+l,\beta},
\]
$$
\langle \tau_{k_1,\bar{l}_1} \alpha_1, \cdots,
\tau_{k_n,\bar{l}_n} \alpha_n \rangle_g (\bar{t}, \s) := \sum_{m,
l, \beta} \frac{q^{\beta}}{m! l!} \langle \tau_{k_1,\bar{l}_1}
\alpha_1 , \cdots, \tau_{k_n,\bar{l}_n} \alpha_n, \bar{t}^m, \s^l
\rangle_{g,n+m+l,\beta} .
$$
to be the generating functions.

Equation~\eqref{e:compare} can be rephrased in terms of these
generating functions.

\begin{proposition} \label{p:1.1}
In the stable range $2g + n \ge 3$,
\begin{equation} \label{e:1.3}
\begin{split}
&\langle \tau_{k+1,\bar{l}\,} \alpha_{1}, \cdots \rangle_g (\bar{t},
\s) \\
&= \langle \tau_{k, \overline{l+1}\,} \alpha_{1}, \cdots \rangle_g
(\bar{t}, \s) + \sum_{\nu} \langle \tau_{k\,} \alpha_{1}, T_{\nu}
\rangle_0(\s)\, \langle \tau_{\overline{l}\,} T^{\nu}, \cdots
\rangle_g(\bar{t}, \s)
\end{split}
\end{equation}
where $\cdots$
denote the same mixed insertions.
\end{proposition}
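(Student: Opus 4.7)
The plan is to apply the cycle-level identity \eqref{e:compare} once and then push forward through a boundary splitting.  Writing $\psi_1^{k+1}\bar\psi_1^l = \psi_1^k\bar\psi_1^{l+1} + \psi_1^k\bar\psi_1^l(\psi_1 - \bar\psi_1)$ and capping with $[\Mbar_{g,n+m+l}(X,\beta)]^{\vir}$, equation \eqref{e:compare} converts the last summand into $\psi_1^k\bar\psi_1^l\cap[D_1]^{\vir}$.  Integrating against the remaining insertions and summing over $m,l,\beta$ with the factors $q^\beta/(m!\,l!)$, the first piece reproduces verbatim the first term on the right-hand side of \eqref{e:1.3}, so the task reduces to identifying the $[D_1]^{\vir}$-contribution with the splitting term.

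For that, I pass to the components $\Mbar_{0,\{1\}+l'+\bullet}(X,\beta')\times_X\Mbar_{g,(n+m-1)+l''+\bullet}(X,\beta'')$ of $D_1$ indexed by $\beta'+\beta''=\beta$ and $l'+l''=l$.  Two geometric facts are needed: on the rational bubble $C_1$ the descendent $\psi_1$ restricts to the internal descendent class $\psi_1^{C_1}$; and since $\pi = \ft\circ\st$ first forgets the map and then forgets the last $l$ marks, the bubble $C_1$ (after $\ft$ retaining only point $1$ and the node) is always contracted, so $\pi$ factors through the projection onto the $C_2$-factor and $\bar\psi_1|_{D_1} = \bar\psi_\bullet^{C_2}$, the ancestor class at the node on $C_2$.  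All $m$ of the $\bar{t}$-insertions are ancestor marks and therefore sit on $C_2$ by the definition of $D_1$, while the $\s^l$ insertions distribute as $l'$ on $C_1$ and $l''$ on $C_2$ in $\binom{l}{l'}$ ways.  A diagonal splitting $\sum_\nu T_\nu\otimes T^\nu$ for the fibre product over $X$ then factors the virtual integral into a $C_1$-piece and a $C_2$-piece.

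Summing over all parameters and using $\binom{l}{l'}/l! = 1/(l'!\,l''!)$ to regroup factorials, the $C_1$-piece assembles into $\langle \tau_k\alpha_1,T_\nu\rangle_0(\s)$ (no $\bar{t}$-argument, because no $\bar{t}$-marks reach $C_1$), while the $C_2$-piece becomes $\langle \tau_{\bar{l}\,}T^\nu,\cdots\rangle_g(\bar{t},\s)$ after relabelling $\bullet$ as the first marked point on $C_2$; the stability hypothesis $2g+n\ge 3$ guarantees $2g+(n+m)\ge 3$ for all $m\ge 0$, so the ancestor classes and $\pi$ are everywhere defined.  The main obstacle is the restriction assertion $\bar\psi_1|_{D_1} = \bar\psi_\bullet^{C_2}$: one must trace $\ft\circ\st$ carefully along $D_1$, in particular when $l'\ge 1$ so that $C_1$ survives $\st$ but becomes unstable once $\ft$ removes the extra marks, and then check that the resulting factorisation of $\pi$ pulls $\psi_1$ back to the ancestor at $\bullet$.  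Everything else is standard Künneth/diagonal bookkeeping of the Novikov and factorial factors.
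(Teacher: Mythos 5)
Your proof is correct and follows the route the paper itself sketches (the paper states Proposition~\ref{p:1.1} is a rephrasing of the cycle-level identity~\eqref{e:compare} but does not write out the derivation): cap $\psi_1^k\bar\psi_1^l$ against \eqref{e:compare}, restrict to the boundary $D_1$, and split the virtual integral via the Künneth/diagonal decomposition, with the identification $\bar\psi_1|_{D_1}=\bar\psi_\bullet^{C_2}$ as the key geometric input. You correctly isolate that last restriction assertion as the nontrivial point and sketch why $C_1$ is always contracted by $\ft\circ\st$ (it already is by $\st$ when $l'=0$ and $\beta'\neq 0$, and after $\ft$ discards the $l'$ $\s$-marks otherwise); the combinatorial bookkeeping $\binom{l}{l'}/l!=1/(l'!\,l''!)$ is also right, so the argument is complete.
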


In fact, only one special type of the mixed invariants will be needed.
Let $(X, E)$ be a smooth pair with $j: E \hookrightarrow X$ a smooth
(infinity) divisor.
At the $i$-th marked point, if $k_i \ne 0$, then we require that
$\alpha_i = \e_i \in j_* H^*(E) \subset H^*(X)$. This type of
invariants will be called \emph{mixed invariants of special type}
and the marked points with $k_i \ne 0$ will be called
\emph{marked points at infinity}.

For a birational map $f: X \dasharrow X'$ with exceptional
loci $Z \subset X$, a mixed invariant is said to be of $f$-special
type if $\alpha.Z=0$ for every insertion $\tau_{k, \bar{l}\,} \alpha$
with $ k \neq 0$. When $(X_{loc}, E)$ comes form the
local model of $(X, Z)$, namely $X_{loc} := \tilde E =
\mathbb{P}_Z(N_{Z/X} \oplus \mathscr{O})$ with $E$ being the
infinity divisor, these two notions of special type agree.

Proposition \ref{p:1.1} will later be used (c.f.~Theorem
\ref{t:local}) in the following setting. Suppose that under a flop
$f: X \dasharrow X'$ we have invariance of ancestor generating
functions. To extend the invariance to allow also descendents we
may reduce the problem to the $g = 0$ case and with at most one
descendent insertion $\tau_{k\,} \alpha$.
For local models, it is important that the invariants are of special type
to ensure the invariance.

\section{Review of Givental's quantization formalism}

\subsection{Formal ingredients in the geometric Gromov--Witten theory}
For a projective smooth variety $X$, Gromov--Witten theory of $X$
consists of the following ingredients
\begin{enumerate}
\item[(i)] $H := H^*(X, \mathbb{Q})$ is a $\mathbb{Q}$-vector
space, assumed of rank $N$. Let $\{ T_{\mu} \}_{\mu=1}^N$ be a
basis of $H$ and $\{ \s^{\mu} \}_{\mu=1}^N$ be the dual
coordinates with ${\partial}/{\partial \s^{\mu}} = T_{\mu}$.
$\mathbf{1} \in H^0(X)$, the (dual of) fundamental class, is a
special element. $H$ carries a symmetric bilinear form, Poincar\'e
pairing,
\[
  ( \cdot, \cdot ) : H \otimes H \to \mathbb{Q}.
\]
Define
\[
  g_{\mu \nu} := ( T_{\mu}, T_{\nu})
\]
and $g^{\mu \nu}$ to be the inverse matrix.

\item[(ii)] Let $\mathscr{H}_t := \oplus_{k=0}^{\infty} H$ be the
infinite dimensional complex vector space with basis $\{ T_{\mu}
\psi^k \}$. $\mathscr{H}_t$ has a natural $\mathbb{Q}$-algebra
structure:
\[
 T_{\mu} \psi^{k_1} \otimes T_{\nu} \psi^{k_2} \mapsto
 (T_{\mu} \cup T_{\nu})  \psi^{k_1 + k_2}.
\]
Let $\{ t^{\mu}_k \}$, $\mu=1, \ldots, N$, $k=0, \ldots, \infty$,
be the dual coordinates of the basis $\{ T_{\mu} \psi^k \}$. We
note that at each marked point, the insertion is
$\mathscr{H}_t$-valued. Let
\[
 t:= \sum_{k, \mu} t^{\mu}_k T_{\mu} \psi^k
\]
denote a general element in the vector space $\mathscr{H}_t$.

\item[(iii)] The generating function of descendents
$F^X_g(t)$ is a formal function on $\mathscr{H}_t$. The generating
function of ancestors $\bar{F}^X_g(\bar{t})$ is a formal function
of $(\s,\bar{t}) \in H \times \mathscr{H}_t$.

\item[(iv)] $H$ carries a (big quantum cohomology) ring structure.
Let $\s^{\mu} = t^{\mu}_0$ and $F_0(\s) = F_0 (t)|_{t_k = 0, \
\forall\, k > 0}$. The ring structure is defined by
\[
T_{\mu_1} *_{\s} T_{\mu_2} := \sum_{\nu,\nu'} \frac{\partial^3
F_0(\s)}{\partial \s^{\mu_1} \partial \s^{\mu_2}
\partial \s^{\nu}}  g^{\nu \nu'} T_{\nu} .
\]
$\mathbf{1}$ is the identity element of the ring.

\item[(v)] The Dubrovin connection $\nabla_z$ on the tangent bundle $TH$
is defined by
\[
\nabla_z := d - z^{-1} \sum_{\mu} d \s^{\mu} (T_{\mu} *).
\]
The quantum cohomology differential equation
\begin{equation} \label{e:qde}
\nabla_z S =0
\end{equation}
has a fundamental solution $J = (J_{\mu, \nu} (\s, z^{-1}))$, an
$N \times N$ matrix-valued function, in (formal) power series of
$z^{-1}$ satisfying the conditions
\begin{equation} \label{e:sympl}
J(\s, z^{-1}) = Id + O(z^{-1})\ \text{and} \ J^*(\s, -z^{-1})
J(\s, z^{-1})= Id,
\end{equation}
where ${}^*$ denotes the adjoint with respect to $(\cdot.\cdot)$.

\item[(vi)] The non-equivariant genus zero Gromov--Witten theory
is graded, i.e.~with a \emph{conformal} structure. The grading is
determined by an \emph{Euler field} $E \in \Gamma(T_X)$,
\begin{equation} \label{Euler}
 E = \sum_{\mu} (1 - \frac{1}{2} \deg T_{\mu} ) \s^{\mu}
 \frac{\partial}{\partial \s^{\mu}} + c_1(T_X).
\end{equation}
\end{enumerate}

\subsection{Semisimple Frobenius manifolds}

The concept of Frobenius manifolds was originally introduced by
B.~Dubrovin. We assume that the readers are familiar with the
definitions of the Frobenius manifolds. See \cite{LP2} Part I for
an introduction. The quantum product $*$, together with Poincar\'e
pairing, and the special element $\mathbf{1}$, defines on $H$ a
Frobenius manifold structure $(QH, *)$.

A point $\s \in H$ is called a \emph{semisimple} point if the
quantum product at the tangent algebra $(T_{\s}H, *_{\s})$ at $\s
\in H$ is isomorphic to $\oplus_1^N \mathbb{C}$ as an algebra.
$(QH,*)$ is called semisimple if the semisimple points is dense in
$H$. If $(QH, *)$ is semisimple, it has idempotents $\{ \epsilon_i
\}_1^N$
\[
  \epsilon_i * \epsilon_j = \delta_{ij} \epsilon_i .
\]
defined up to $S_N$ permutations. The \emph{canonical coordinates}
$\{ u^i \}_1^N$ are defined by ${\partial}/{\partial u^i} =
\epsilon_i$. When the Euler field is present, the canonical
coordinates are also uniquely defined up to permutations. We will
often use the \emph{normalized} form
\[
\tilde{\epsilon_i} = \frac{1}{\sqrt{(\epsilon_i, \epsilon_i)}}\,
\epsilon_i.
\]

\begin{lemma}
$\{ \epsilon_i \}$ and $\{ \tilde{\epsilon}_i \}$ form orthogonal bases.
\end{lemma}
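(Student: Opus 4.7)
The plan is to deduce orthogonality of $\{\epsilon_i\}$ directly from the Frobenius (associativity) axiom for the pairing, namely $(a*b, c) = (a, b*c)$, which is built into any Frobenius manifold together with its non-degenerate invariant pairing (here the Poincar\'e pairing restricted to the tangent space $T_\s H$). All of the work happens at a fixed semisimple point, where the idempotent decomposition $\epsilon_i * \epsilon_j = \delta_{ij}\epsilon_i$ is given.

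The key computation I would carry out first: apply Frobenius invariance with $a = b = \epsilon_i$ and $c = \epsilon_j$. Using $\epsilon_i * \epsilon_i = \epsilon_i$ on the left and $\epsilon_i * \epsilon_j = \delta_{ij}\epsilon_i$ on the right, one gets
\[
(\epsilon_i, \epsilon_j) \;=\; (\epsilon_i * \epsilon_i, \epsilon_j) \;=\; (\epsilon_i, \epsilon_i * \epsilon_j) \;=\; \delta_{ij}\,(\epsilon_i, \epsilon_i),
\]
which already delivers the orthogonality statement for $\{\epsilon_i\}$.

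Next I would check that the diagonal entries $(\epsilon_i,\epsilon_i)$ are nonzero, so that the normalization $\tilde\epsilon_i := \epsilon_i/\sqrt{(\epsilon_i,\epsilon_i)}$ is well defined. If $(\epsilon_i,\epsilon_i) = 0$ for some $i$, then combined with the orthogonality just established, $\epsilon_i$ would pair trivially against the basis $\{\epsilon_1,\dots,\epsilon_N\}$ of $T_\s H$; this contradicts the non-degeneracy of the Poincar\'e pairing. Once this is in place, bilinearity and the previous identity give $(\tilde\epsilon_i,\tilde\epsilon_j) = \delta_{ij}$, so $\{\tilde\epsilon_i\}$ is orthonormal (in particular orthogonal), finishing the lemma.

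There is no substantive obstacle here: the lemma is essentially bookkeeping, once the Frobenius compatibility $(a*b,c) = (a,b*c)$ and non-degeneracy of the pairing are invoked. The only minor point to be careful about is that $\sqrt{(\epsilon_i,\epsilon_i)}$ is taken in $\mathbb{C}$, which is harmless since we have already passed to the complexified Frobenius manifold where canonical coordinates live.
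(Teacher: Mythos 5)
Your proposal is correct and uses exactly the same key computation as the paper: $(\epsilon_i, \epsilon_j) = (\epsilon_i * \epsilon_i, \epsilon_j) = (\epsilon_i, \epsilon_i * \epsilon_j) = \delta_{ij}(\epsilon_i, \epsilon_i)$. The extra remark that non-degeneracy forces $(\epsilon_i,\epsilon_i)\neq 0$, which makes the normalization $\tilde\epsilon_i$ well defined, is a helpful addendum that the paper leaves implicit.
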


\begin{proof}
\begin{align*}
(\epsilon_i, \epsilon_j) &= (\epsilon_i * \epsilon_i, \epsilon_j)
= (\epsilon_i, \epsilon_i * \epsilon_j)\\
&= (\epsilon_i, \delta_{ij} \epsilon_i ) = \delta_{ij}
(\epsilon_i, \epsilon_i).
\end{align*}
\end{proof}

When the quantum cohomology is semisimple,
the quantum differential equation \eqref{e:qde} has
a fundamental solution of the following type
\[
 \mathbf{R} (\s,z):= \Psi(\s)^{-1} R(\s, z) e^{\mathbf{u}/z} ,
\]
where $(\Psi_{\mu i}) := (T_{\mu}, \tilde{\epsilon_i} )$ is the
transition matrix from $\{ \tilde{\epsilon_i} \}$ to $\{ T_{\mu}
\}$; $\mathbf{u}$ is the diagonal matrix $(\mathbf{u}_{ij}) =
\delta_{ij} u^i$. The main information of $\mathbf{R}$ is carried
by $R(\s,z)$, which is a (formal) power series in $z$. One notable
difference between $J(\s, z^{-1})$ and $R(\s,z)$ is that the
former is a (formal) power series in $z^{-1}$ while the latter is
a (formal) power series in $z$. 
See \cite{aG2} and Theorem~1 in Chapter~1 of \cite{LP2}.

\subsection{Preliminaries on quantization}

Let $\mathscr{H}_q := H[z]$. Let $\{ T_{\mu} z^k
\}_{k=0}^{\infty}$ be a basis of $\mathscr{H}_q$, and $\{
q^{\mu}_k \}$ the dual coordinates. We define an isomorphism of
$\mathscr{H}_q$ to $\mathscr{H}_t$ as an affine vector space via a
\emph{dilaton shift} ``$t = q + z$'':
\begin{equation} \label{shift}
   t^{\mu}_k = q^{\mu}_k + \delta^{\mu \mathbf{1}} \delta_{k 1}.
\end{equation}
The cotangent bundle $\mathscr{H} := T^* \mathscr{H}_q$ has a
natural symplectic structure
$$
\Omega = \sum_{k, \mu, \nu} g_{\mu \nu}\, dp^\mu_k \wedge dq^\nu_k
$$
where $\{ p^{\mu}_k \}$ are the dual coordinates in the fiber
direction of $\mathscr{H}$ in the natural basis $\{ T_{\mu}
z^{-k-1} \}_{k=0}^{\infty}$. $\mathscr{H}$ is naturally isomorphic
to the $H$-valued Laurent series in $z^{-1}$, $H[\![z^{-1}]\!]$.
In this way, then
$$
\Omega(f, g) = {\rm Res}_{z = 0} (f(-z), g(z)).
$$

To quantize an infinitesimal symplectic transformation on
$(\mathscr{H}, \Omega)$, or its corresponding quadratic
hamiltonians, we recall the standard Weyl quantization. An
identification $\mathscr{H}=T^* \mathscr{H}_q$ of the symplectic
vector space $\mathscr{H}$ (the \emph{phase space}) as a cotangent
bundle of $\mathscr{H}_q$ (the \emph{configuration space}) is
called a polarization. The ``Fock space'' will be a certain class
of functions $f(\hbar, q)$ on $\mathscr{H}_q$ (containing at least
polynomial functions), with additional formal variable $\hbar$
(``Planck's constant''). The classical observables are certain
functions of $p, q$. The quantization process is to find for the
classical mechanical system on $(\mathscr{H}, \Omega)$ a
``quantum'' system on the Fock space such that the classical
observables, like the hamiltonians $h(q,p)$ on $\mathscr{H}$, are
quantized to become operators $\widehat{h}(q, {\partial}/{\partial
q})$ on the Fock space.

Let $A(z)$ be an $\on{End}(H)$-valued Laurent formal series in $z$
satisfying
\[
  \Omega(A f, g) + \Omega(f, A g)=0,
\]
for all $f,g \in \mathscr{H}$. That is, $A(z)$ defines an
infinitesimal symplectic transformation. $A(z)$ corresponds to a
quadratic polynomial \footnote{Due to the nature of the infinite
dimensional vector spaces involved, the ``polynomials'' here might
have infinite many terms, but the degrees remain finite.} $P(A)$
in $p, q$
\[
  P(A)(f) := \frac{1}{2} \Omega(Af, f) .
\]

Choose a \emph{Darboux coordinate system} $\{ q^i_k, p^i_k \}$ so
that $\Omega = \sum dp^i_k \wedge dq^i_k$. The quantization $P
\mapsto \widehat{P}$ assigns
\begin{equation} \label{e:wq}
\begin{split}
&\widehat{1}= 1, \  \widehat{p_k^{i}}= \sqrt{\hbar}
\frac{\partial}{\partial q^{i}_k}, \
\widehat{q^{i}_k} = q^{i}_k / {\sqrt{\hbar}}, \\
&\widehat{p^{i}_k p^{j}_l} = \widehat{p^{i}_k} \widehat{p^{j}_l}
=\hbar \frac{\partial}{\partial q^{i}_k}
\frac{\partial}{\partial q^{j}_l}, \\
&\widehat{p^{i}_k q^{j}_l} = q^{j}_l \frac{\partial}{\partial q^{i}_k},\\
&\widehat{q^{i}_k q^{j}_l} = {q}^{i}_k {q}^{j}_l /\hbar ,
\end{split}
\end{equation}
In summary, the quantization is the process
\[
\begin{matrix}
  A   &\mapsto & P(A)  &\mapsto & \widehat{P(A)} \\
  \text{inf. sympl. transf.}  &\mapsto & \text{quadr. hamilt.}
    &\mapsto & \text{operator on Fock sp.}.
\end{matrix}
\]
It can be readily checked that the first map is a Lie algebra
isomorphism: The Lie bracket on the left is defined by $[A_1,
A_2]=A_1 A_2 - A_2 A_1$ and the Lie bracket in the middle is
defined by Poisson bracket
\[
\{ P_1(p,q), P_2(p,q) \} = \sum_{k,i} \frac{\partial P_1}{\partial
p^{i}_k} \frac{\partial P_2}{\partial q^{i}_k} -\frac{\partial
P_2}{\partial p^{i}_k} \frac{\partial P_1}{\partial q^{i}_k}.
\]
The second map is close to be a Lie algebra homomorphism. Indeed
\[
 [\widehat{P_1},\widehat{P_2}] =  \widehat{\{ P_1, P_2 \}} +
  \mathscr{C}(P_1,P_2),
\]
where the cocycle $\mathscr{C}$, in orthonormal coordinates,
vanishes except
\[
 \mathscr{C}(p^{i}_k p^{j}_l, q^{i}_k q^{j}_l) =
 -\mathscr{C}(q^{i}_k q^{j}_l, p^{i}_k p^{j}_l)
 = 1 + \delta^{i j} \delta_{kl}.
\]

\begin{example} \label{e:2.2}
Let $\dim H=1$ and $A(z)$ be multiplication by $z^{-1}$.
It is easy to see that $A(z)$ is infinitesimally symplectic.
\begin{equation} \label{e:cse}
 \begin{split}
 P(z^{-1})= &-\frac{q_0^2}{2} - \sum_{m=0}^{\infty} q_{m+1} p_m \\
 \widehat{P(z^{-1})} = &-\frac{q_0^2}{2}
            - \sum_{m=0}^{\infty} q_{m+1} \frac{\partial}{\partial q_m}.
 \end{split}
\end{equation}
\end{example}

Note that one often has to quantize the symplectic instead of the
infinitesimal symplectic transformations.
Following the common practice in physics, define
\begin{equation} \label{e:q}
  \widehat{e^{A(z)}} := e^{\widehat{A(z)}},
\end{equation}
for $A(z)$ an infinitesimal symplectic transformation.

\subsection{Ancestor potentials via quantization}

Let $N$ be the rank of $H=H^*(X)$ and $\mathscr{D}_{N}({\bf t}) =
\prod_{i = 1}^N \mathscr{D}_{pt}(t^i)$ be the descendent potential
of $N$ points, where
\begin{equation*}
\mathscr{D}_{pt}(t^i) \equiv \mathscr{A}_{pt} (t^i) := \exp
{\sum_{g=0}^{\infty}
 \hbar^{g - 1} F^{pt}_g(t^i)}
\end{equation*}
is the total descendent potential on a point 
and $t^i = \sum_k t^i_k z^k$.

Suppose that $(QH, *)$ is semisimple, then the ancestor potential
can be reconstructed from the $\mathscr{D}_{N}({\bf t})$ via the
the quantization formalism.

First of all, $\{ \tilde{\epsilon}_i \}$ define an
\emph{orthonormal basis} for $H$ with canonical coordinates
$\{u^i\}_{i = 1}^N$. Therefore, the dual coordinates $(p^i_k,
q^i_k)$ of the basis $\{ \tilde{\epsilon}_i z^k \}_{k \in
\mathbb{Z}}$ for $\mathscr{H}$ form a Darboux coordinate system.
The coordinate system ${\bf t} = \{t^i_k\}$ is related to ${\bf q}
= \{q^i_k\}$ by the dilaton shift \eqref{shift}.
Note that $\partial/\partial q^i_k = \partial/\partial t^i_k$.

The following beautiful formula was first formulated by Givental \cite{aG2}.
Many special cases have since been solved by Givental and others
\cite{BCK}, \cite{ypL2}.
It was completely established by C.~Teleman in a recent preprint \cite{cT}.

\begin{theorem}[\cite{aG2, cT}] \label{quantization}
\begin{equation} \label{e:ancestor}
\mathscr{A}_X (\bar t, s) = e^{\bar{c}(\s)}
\widehat{\Psi}^{-1}(\s) \widehat{R}_X(s, z)
e^{\widehat{\mathbf{u}/z}}(\s) \mathscr{D}_{N}({\bf t}),
\end{equation}
where $\bar{c}(\s) = \frac{1}{48} \ln \det
(\epsilon_i, \epsilon_j)$.
\end{theorem}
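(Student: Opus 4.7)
The plan is to identify the two sides of \eqref{e:ancestor} as semisimple cohomological field theories (CohFTs) sharing the same underlying Frobenius manifold $(QH,*)$, and then to invoke the uniqueness theorem of Teleman \cite{cT}. On the left, the ancestor potential $\mathscr{A}_X(\bar t,\s)$ packages, for each $(g,n)$ in the stable range, a class $\Omega^X_{g,n}(\s)\in H^*(\Mbar_{g,n})\otimes (H^*)^{\otimes n}$ whose integrals against monomials $\prod \bar\psi_i^{k_i}$ recover the ancestor invariants; the system $\{\Omega^X_{g,n}\}$ satisfies the CohFT gluing axioms by the boundary splitting of $[\Mbar_{g,n}(X,\beta)]^{\vir}$, a higher-genus analogue of the argument behind \eqref{e:compare}.

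The right-hand side, when expanded by the Weyl rules \eqref{e:wq}, also defines a CohFT. In the canonical frame $\{\tilde\epsilon_i\}$ the innermost operator $e^{\widehat{\mathbf{u}/z}}$ acts on the decoupled $N$-point descendent potential $\mathscr{D}_N$ by translating each $t^i$ by $u^i/z$, turning $\mathscr{D}_N$ into a product of $N$ Witten--Kontsevich $\tau$-functions decorated by the canonical coordinates. The outer factor $\widehat{R}_X$, being the quantization of the upper-triangular infinitesimal symplectic transformation associated to $R(\s,z)=Id+O(z)$, resolves by its Feynman-diagram expansion into a sum over stable graphs with vertices carrying the local point-CohFTs above and edges carrying bilinear propagators built from $R$; this is exactly Givental's $R$-matrix action on the trivial $N$-copy point CohFT. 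The operator $\widehat\Psi^{-1}$ rotates back to the flat basis $\{T_\mu\}$, while the scalar $e^{\bar{c}(\s)}$ absorbs the genus-one determinant anomaly arising because $R$ is symplectic but not orthogonal.

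The match of genus zero data is immediate from the construction of $\Psi$, $\mathbf{u}$ and $R$ out of the Frobenius structure $(QH,*,\mathbf{1},(\cdot,\cdot),E)$ coming from \eqref{Euler}: the idempotent decomposition produces $\Psi$ and $\mathbf{u}$, and $R$ is the unique power-series gauge transformation diagonalizing the Dubrovin connection $\nabla_z$ of \eqref{e:qde} subject to the symplectic condition \eqref{e:sympl}. Thus both sides of \eqref{e:ancestor} are semisimple CohFTs sharing identical genus zero data. The main obstacle, and the real content of \cite{cT}, is then the uniqueness statement: in the semisimple regime the Givental group acts freely and transitively on the set of CohFTs with prescribed Frobenius structure, so two CohFTs with the same $(\Psi,\mathbf{u},R)$ must coincide. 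Teleman's proof of this fact rests on Mumford's Grothendieck--Riemann--Roch formula for the Hodge bundle on $\Mbar_{g,n}$ together with the classification of tautological relations in the semisimple sector; that is the one step I would not attempt to reproduce.
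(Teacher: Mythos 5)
The paper does not supply a proof of Theorem~\ref{quantization}; it cites Givental \cite{aG2} for the formula and Teleman \cite{cT} for its proof, and the text following the statement merely checks that the operators on the right-hand side of \eqref{e:ancestor} are well defined and recalls that the factor $e^{\widehat{\mathbf{u}/z}}$ may be dropped by the string equation. So there is no internal proof to compare your sketch against. Your outline --- interpret both sides as semisimple CohFTs over the same Frobenius manifold, match the genus-zero data produced by $(\Psi,\mathbf{u},R)$, and appeal to Teleman's uniqueness for the rest --- is the correct high-level reading of the Givental--Teleman argument, and your acknowledgment that Teleman's uniqueness is the one hard step you would not reproduce is in keeping with how the paper itself treats it.

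A few technical points in your description should nonetheless be corrected. The operator $e^{\widehat{\mathbf{u}/z}}$ does not act by ``translating each $t^i$ by $u^i/z$'': by Example~\ref{e:2.2}, $\widehat{\mathbf{u}/z}$ is (up to the central $-q_0^2/2$ term) the string-equation operator in each of the $N$ point factors, so $e^{\widehat{\mathbf{u}/z}}\mathscr{D}_N = \mathscr{D}_N$ exactly as the paper's remark records; a shift of $t^i$ by $u^i/z$ would not even preserve $\mathscr{H}_t = H[z]$. Teleman's uniqueness in \cite{cT} is not proved from ``Mumford's GRR together with the classification of tautological relations in the semisimple sector''; its essential structural input is Harer stability and the Madsen--Weiss theorem on the stable cohomology of moduli spaces of curves, with GRR for the Hodge bundle entering only as a computational tool, and the classification of tautological relations is a later, strictly harder result that plays no role. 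Finally, the characterization of $e^{\bar c(\s)}$ as compensating for $R$ being ``symplectic but not orthogonal'' is not the standard one: $\bar c(\s) = \frac{1}{48}\ln\det(\epsilon_i,\epsilon_j)$ is the determinant contribution to the genus-one $G$-function of the semisimple Frobenius manifold, a normalization fixed by the genus-one comparison independently of the particular $R$.
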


Note that it is not very difficult to check that $\ln {R}_X(s, z)$
defines an infinitesimal symplectic transformation. See e.g.~\cite{aG2, LP2}.
$\widehat{R}_X(s, z)$ is therefore well-defined.
By Example~\ref{e:2.2}, $e^{\widehat{\mathbf{u}/z}}$ is also well-defined.
Since the quantization involves only the $z$ variable,
$\widehat{\Psi}^{-1}(\s)$ really is the induced transformation
from canonical coordinates to flat coordinates. 
No quantization is needed.

\begin{remark}
The operator $e^{\widehat{\mathbf{u}/z}}$ can be removed from
the above expression. It is shown in \cite{aG2} that the string
equation implies that $e^{\widehat{\mathbf{u}/z}}\mathscr{D}_{N} =
\mathscr{D}_{N}$.
\end{remark}

\section{Analytic Continuation and Local models} \label{s:3}

In the first part of this section, we discuss the issues of the 
analyticity of the Frobenius manifolds and the analytic continuation 
involved in the study of the flops $f: X \dashrightarrow X'$.
We then move to the study of the local models. 
There the semisimplicity of the Frobenius manifolds and the quantization
formalism are used to reduce the invariance of Gromov--Witten theory
to the semi-classical (genus zero) case.

\subsection{Review of the genus zero theory}

Let $f:X \dasharrow X'$ be a simple $\mathbb{P}^r$ flop with $\T$
being the graph correspondence. This subsection rephrases the
analytic continuation of big quantum rings proved in \cite{LLW} in
more algebraic terms.

Let $\N$ be the cone of curve classes $\beta \in NE(X)$ with $\T
\beta \in NE(X')$, i.e.~the classes which are effective on both
sides. Let
$$
\f(q) = \frac{q}{1 - (-1)^{r + 1}q}
$$
be the rational function coming from the generating function of
three points Gromov--Witten invariants attached to the extremal ray
$\ell \subset Z \cong \mathbb{P}^r$ with positive degrees. Namely
for any $i, j, k \in \mathbb{N}$ with $i + j + k = 2r +1$,
$$
\f(q^\ell)= \sum_{d \ge 1} \langle h^i, h^j, h^k \rangle_{0, 3,
d\ell}\, q^{d\ell},
$$
where $h$ denotes a class in $X$ which restricts to the hyperplane
class of $Z$.

Define the ring
\begin{equation}
\R = \widehat{\mathbb{C}[\N]}[\f],
\end{equation}
which can be regarded as certain algebraization of the Novikov
ring $\widehat{NE(X)}$ in the $q^\ell$ variable. Notice that $\R$
is canonically identified with its counterpart $\R' =
\widehat{\mathbb{C}[\N']}[\f']$ under $\T$ since $\T \N = \N'$ and
\begin{equation} \label{f-eq}
\T \f(q^\ell) = (-1)^r - \f(q^{\ell'})
\end{equation}
(via $\f(q) + \f(q^{-1}) = (-1)^r$).

\begin{theorem} \label{QH-inv} 
The genus zero $n$-point functions with $n \ge 3$ lie in $\R$:
$$
\langle \alpha \rangle^X \in \R
$$
for all $\alpha \in H^*(X)^{\oplus n}$. Moreover $\T \langle
\alpha \rangle^X = \langle \T \alpha \rangle^{X'}$ in $\R'$.
\end{theorem}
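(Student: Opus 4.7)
The plan is to deduce Theorem~\ref{QH-inv} from the corresponding statement for three-point functions, which is the principal genus-zero invariance theorem of \cite{LLW}. First I would quote the $n=3$ case in the form needed here: for every $\mu,\nu,\rho$, the three-point function
\[
\langle T_\mu, T_\nu, T_\rho \rangle^X(\s) = \frac{\partial^3 F_0^X}{\partial \s^\mu\, \partial \s^\nu\, \partial \s^\rho}(\s)
\]
has each $\s$-Taylor coefficient in $\R$, the extremal-ray $q^\ell$-dependence being absorbed into a polynomial in $\f(q^\ell)$ with coefficients in $\widehat{\mathbb{C}[\N]}$, and this element is identified with the corresponding three-point function on $X'$ under $\T$ via the key relation \eqref{f-eq}. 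The ring $\R$ was designed precisely to accommodate the Novikov variables of both sides together with the rational function $\f$ coming from extremal-ray invariants, so that the algebraic formulation here is a repackaging of the analytic-continuation statement of \cite{LLW}.

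Second, I would pass from $n=3$ to general $n \ge 3$ via the elementary observation that the $n$-point generating function is an iterated $\s$-derivative of the three-point one:
\[
\langle T_{\mu_1}, \ldots, T_{\mu_n} \rangle^X(\s) = \frac{\partial^{\,n-3}}{\partial \s^{\mu_4}\cdots \partial \s^{\mu_n}} \langle T_{\mu_1}, T_{\mu_2}, T_{\mu_3} \rangle^X(\s),
\]
since both sides equal the $n$-th partial derivative of $F_0^X$ at $\s$. Because each derivation $\partial/\partial \s^\mu$ preserves the property of having $\R$-valued coefficients in the $\s$-Taylor expansion, the $n$-point function inherits this from the three-point function, and multilinearity in the insertions yields $\langle \alpha \rangle^X \in \R$ for every $\alpha \in H^*(X)^{\oplus n}$.

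Third, I would track $\T$-equivariance through the differentiation. Since $\T$ acts linearly on $H^*(X)$ and identifies the flat coordinate systems $\s$ on the two Frobenius manifolds, it intertwines $\partial/\partial \s^\mu$ on the $X$ side with $\partial/\partial \s^{\T \mu}$ on the $X'$ side. Combined with the $n=3$ identity $\T \langle T_{\mu_1}, T_{\mu_2}, T_{\mu_3} \rangle^X = \langle \T T_{\mu_1}, \T T_{\mu_2}, \T T_{\mu_3} \rangle^{X'}$ in $\R'$ and iterating $n-3$ times, this gives $\T \langle \alpha \rangle^X = \langle \T \alpha \rangle^{X'}$ in $\R'$ for all $n \ge 3$.

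The substantive work sits entirely in the $n=3$ base case, where extremal contributions must be controlled by a mixture of divisor-type recursions and degeneration to the local model; this is the content of \cite{LLW}. The only step I expect to require genuine care in the present proof is confirming that the main theorem of \cite{LLW} is exactly the $n=3$ case of Theorem~\ref{QH-inv} as formulated in the algebraic language of $\R$. The reduction from general $n$ to $n=3$ by iterated $\s$-differentiation, together with the propagation of $\T$-invariance through this differentiation, is purely formal and poses no further obstacle.
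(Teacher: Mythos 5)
Your proposal misses the substantive part of the proof. The theorem has two halves: the $\T$-invariance $\T\langle\alpha\rangle^X = \langle\T\alpha\rangle^{X'}$ and the $\R$-valuedness $\langle\alpha\rangle^X\in\R$. You quote both as if they were already in \cite{LLW}, but the paper explicitly says the opposite: \cite{LLW} establishes the $\T$-invariance (for all $n\ge 3$, not only $n=3$) but \emph{not} the statement that $\langle\alpha\rangle^X\in\R$. Showing that each $n$-point function is an honest element of $\widehat{\mathbb{C}[\N]}[\f]$, rather than merely a formal power series admitting an analytic continuation across the K\"ahler wall, is the new content here. It is not a ``repackaging'': a priori the sum over extremal degrees could give any holomorphic function of $q^\ell$ on $\mathbb{P}^1\setminus\{(-1)^{r+1}\}$, and the theorem pins it down to a polynomial in $\f$ with coefficients in $\widehat{\mathbb{C}[\N]}$. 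The paper's proof of this is nontrivial: it invokes the degeneration formula to split each invariant into relative invariants of $(Y,E)$ (shown to lie in $\widehat{\mathbb{C}[\N]}$ via effectivity of curve classes on both sides) and of the local model $(\tilde E,E)$, then uses the reconstruction theorem of \cite{LLW}, \S5 to show the local contributions are generated by the functional equation \eqref{f-eq} and quasi-linearity, and finally observes by induction that $\delta^m\f$ is a polynomial in $\f$. None of this appears in your argument.

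Your reduction of general $n$ to $n=3$ via iterated $\s$-differentiation is also not doing any work. Once you assume, as your ``base case,'' that the three-point generating function $\langle T_\mu,T_\nu,T_\rho\rangle^X(\s)$ has all $\s$-Taylor coefficients in $\R$, you have already assumed the conclusion for every $n\ge 3$: the $\s$-coefficients of that generating function \emph{are} the $n$-point functions, by definition. So the passage from $n=3$ to general $n$ is a tautology, and the claimed ``genuine care'' you flag at the end — confirming that \cite{LLW} is ``exactly the $n=3$ case in the algebraic language of $\R$'' — is in fact the entire proof, left undone. You should instead follow the degeneration-to-local-model route and carry out the $\R$-valuedness argument directly.
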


\begin{proof}
This is the main result of \cite{LLW} except the statement that
$\langle \alpha \rangle^X \in \R$.
For this, the degeneration analysis in \S~4 of \cite{LLW} implies
$$
\langle \alpha \rangle^{\bullet X} = \sum_\mu m(\mu) \sum_I
\langle \alpha_1 \mid \e_I, \mu \rangle^{\bullet (Y, E)}\langle
\alpha_2 \mid \e^I, \mu \rangle^{\bullet (\tilde E, E)}.
$$
(A generalization to all genera is presented in the next section.)
Here $Y = {\rm Bl}_Z X = \bar\Gamma_f \subset X \times X'$;
$\langle \cdot \rangle^{\bullet}$ denote invariants with
possibly disconnected domain curves.
Under the projections $\phi: Y
\to X$ and $\phi': Y \to X'$, the variable $q^{\beta_1}$ for
$\beta_1 \in NE(Y)$ is identified with $q^{\phi_* \beta_1} \in
NE(X)$. If $q^{\beta_1}$ appears in the sum of contact type $\mu$,
then $(E.\beta_1) = |\mu| \ge 0$. Also
$$
\T \phi_* \beta_1 = \phi'_* \beta_1 + |\mu|\ell' \in NE(X').
$$
Hence $\langle \alpha_1 \mid \e_I, \mu \rangle^{\bullet (Y, E)}
\in \widehat{\mathbb{C}[\N]}$.

For the local model $\tilde E := \mathbb{P}_Z(N_{Z/X} \oplus
\mathscr{O})$, the process in \cite{LLW} (\S5, Theorem 5.6) via
reconstruction theorem shows that there are indeed only two
generators of the functional equations. One is (\ref{f-eq}), which
is the source of analytic continuation. Another one is the
quasi-linearity (\cite{LLW}, Lemma 5.4), which is an identity in
$\mathbb{C}[\N]$ where no analytic continuation is needed.

Denote
\begin{equation} \label{e:delta}
 \delta = \delta_h = q^{\ell} \frac{d}{dq^{\ell}}.
\end{equation}
Then all other analytic continuation come from $\delta^m \f$'s with
$m \ge 0$. It remains to show that $\delta^m \f$ is a polynomial in $\f$.
This follows easily from $\delta \f = \f + (-1)^{r + 1} \f^2$ and
$\delta(\f_1 \f_2) = (\delta \f_1) \f_2 + \f_1 \delta \f_2$ by
induction on $m$.
\end{proof}

\subsection{Integral structure on local models} \label{s:3.2}

For $X = \tilde E$, the above proof shows that
\begin{equation}
\langle \alpha \rangle \in \mathbb{C}[\N][\f] =: \R_{loc}
\end{equation}
without the need of taking completion, where $\N =
\mathbb{Z}_+\gamma + \mathbb{Z}_+(\gamma + \ell)$. In fact for a
given set of insertions $\alpha$ and genus $g$, the virtual
dimension count shows that the \emph{contact weight} $d_2 :=
(E.\beta)$ is fixed among all $\beta = d_1 \ell + d_2 \gamma$ in
the series $\langle \tau_{k, \bar l}\alpha \rangle_g^X$. Hence for
$g = 0$ we must have
\begin{equation*}
\langle \alpha \rangle^X = q^{d_2 \gamma} (p_0(\f) + q^{\ell}
p_1(\f) + \cdots + q^{d_2 \ell} p_{d_2}(\f))
\end{equation*}
for certain polynomials $p_i(\f) \in \mathbb{Z}[\f]$.

In particular $\langle \alpha \rangle$ is an analytic function
over the extended K\"ahler moduli $\omega \in
H^{1,1}_\mathbb{R}(X) + i(\mathcal{K}_X \cup
\T^{-1}\mathcal{K}_{X'})$ via the identification
\begin{equation} \label{e:3.5}
 q^\beta = e^{2\pi i (\omega.\beta)}.
\end{equation}
Thus analytic continuation can be taken in the traditional complex
analytic sense or as isomorphisms in the ring $\R_{loc} \cong
\R'_{loc}$.

\subsection{Analytic structure on the Frobenius manifolds}

The Frobenius manifold corresponding to $X$ is a priori a formal
scheme, given by the formal completing $\widehat{H}_X$ of
$H^*(X,\mathbb{C})$ at the origin. 
The big quantum product takes values in the \emph{Novikov ring}, 
or equivalently the \emph{formal} K\"ahler moduli 
$\widehat{\mathbb{C}[NE(X)]}$. 
The divisor axiom implies that one may combine $H^2(X, \mathbb{C})$
directions of the Frobenius manifold and the formal K\"ahler moduli
into a formal completion at $q=0$ of the complex torus
\begin{equation} \label{e:3.8}
 q \in \frac{H^2(X,\mathbb{C})}{H^2(X,\mathbb{Z})}.
\end{equation}
Indeed, let $s = s' + s_1$ be a point in the Frobenius manifold
with $s_1 \in H^2(X, \mathbb{C})$.
The divisor axiom says that
$$
\langle \alpha \rangle_\beta (s)\,q^\beta = \langle \alpha
\rangle_\beta (s')\, q^\beta e^{(s_1. \beta)}.
$$
Compared with \eqref{e:3.5}, this suggests an identification of the formal
K\"ahler moduli and the corresponding underlying Frobenius manifold
in the $H^2(X)$ direction into the complex torus \eqref{e:3.8}.
\footnote{In string theory, the identification of \emph{weights} 
$q^\beta = e^{2\pi i(\omega.\beta)}$ is essential in matching the $A$ model 
and $B$ model moduli spaces in mirror symmetry (cf.~\cite{CoKa}). 
It is generally believed that the GW theory converges in the ``large radius 
limit'', i.e.~when ${\rm Im}\,\omega$ is large.} 
Note that the ``origin'' of $s_1=0$ is moved to the origin of $q=0$ under
this identification.
In the case $\ell$ is the only primitive numerical class of curves and 
$h$ an ample class such that $(h.\ell) = 1$, one may set $s_1 = t h$.
Thus, we have the familiar identification
\begin{equation*}
q = q^\ell = e^t,
\end{equation*}
which will be used in the appendix. 
This identification can be done at the analytic
level in some directions of $H^2(X)$ when the convergence is known.

Let $f:X \dasharrow X'$ be a simple $\mathbb{P}^r$ flop and $h$ be
an ample divisor class dual to the extremal ray $\ell$,
i.e.~$(h.\ell) = 1$. Then $H^2(X, \mathbb{Z}) = \mathbb{Z}h \oplus
H^2(X, \mathbb{Z})^{\perp_\ell}$. Theorem~\ref{QH-inv} gives an
analytic structure on $\widehat{H}_X$ in the $h$-direction:
\begin{corollary} \label{c:3.2}

(i) The Frobenius manifold structure on $\widehat{H}_X$ can be
extended to
$$
 H_X := \widehat{H}_X^{\perp_\ell} \times
 (\mathbb{P}^1_{q^{\ell}} \setminus (-1)^{r+1}).
$$

(ii) $H_X \cong H_{X'}$.

(iii) If $X$ is the local model, $H_X$ is an analytic manifold.
\end{corollary}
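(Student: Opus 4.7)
The plan is to read all three parts off Theorem~\ref{QH-inv} together with its refinement in \S\ref{s:3.2}, exploiting that the rational function $\f(q^\ell)=q^\ell/(1-(-1)^{r+1}q^\ell)$ is analytic on $\mathbb{P}^1_{q^\ell}$ except at its unique pole $q^\ell=(-1)^{r+1}$. Consequently any element of $\R=\widehat{\mathbb{C}[\N]}[\f]$, being polynomial in $\f$ with coefficients formal in the remaining Novikov and Frobenius directions, is honestly analytic in the $q^\ell$ variable over $\mathbb{P}^1_{q^\ell}\setminus\{(-1)^{r+1}\}$.

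For (i), the Frobenius structure on $\widehat H_X$ is determined by the three-point genus zero correlators $\langle T_\mu, T_\nu, T_\rho\rangle^X$, which by Theorem~\ref{QH-inv} lie in $\R$. Using the divisor axiom together with the identification \eqref{e:3.5}, the $h$-direction of the underlying Frobenius manifold (where $h$ is the ample class dual to $\ell$) combines with the Novikov variable $q^\ell$ into a single analytic coordinate on $\mathbb{P}^1_{q^\ell}$, while the orthogonal directions remain formal. The observation above then extends the structure constants analytically across $\mathbb{P}^1_{q^\ell}\setminus\{(-1)^{r+1}\}$, producing the enlarged space $H_X$.

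For (ii), Theorem~\ref{QH-inv} supplies a canonical identification $\R\cong\R'$ under $\T$; combined with \eqref{f-eq} and $\T\ell=-\ell'$, this corresponds at the level of coordinates to the biregular involution $q^\ell\leftrightarrow 1/q^{\ell'}$ on $\mathbb{P}^1$. A short computation using $\f(q)+\f(q^{-1})=(-1)^r$ shows that this involution fixes the excluded value $(-1)^{r+1}$, hence restricts to an isomorphism of the excised lines. Together with the induced isomorphism $\widehat H_X^{\perp_\ell}\cong\widehat H_{X'}^{\perp_{\ell'}}$ and the intertwining identity $\T\langle\alpha\rangle^X=\langle\T\alpha\rangle^{X'}$, this yields the Frobenius isomorphism $H_X\cong H_{X'}$.

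For (iii), \S\ref{s:3.2} sharpens the conclusion for the local model $X=\tilde E$: each correlator lies in the \emph{polynomial} ring $\mathbb{C}[\N][\f]=\R_{loc}$, and the virtual dimension argument fixes the contact weight $d_2=(E.\beta)$ within each series. Consequently every structure constant is a finite polynomial in $q^\gamma, q^\ell,\f$, i.e.~a rational function on the torus $H^2(X,\mathbb{C})/H^2(X,\mathbb{Z})$ with poles only along the pole locus of $\f$. These honest analytic functions define the Frobenius structure on $H_X$, giving (iii). The most delicate step is the book-keeping in (ii): matching the purely algebraic identification $\T q^\ell=1/q^{\ell'}$ at the level of Novikov rings with the analytic change of coordinates on the excised projective lines, and verifying that the excluded points on the two sides coincide under $\T$. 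Once this is in place, the remaining claims are immediate consequences of Theorem~\ref{QH-inv} and the polynomial presentation available on the local model.
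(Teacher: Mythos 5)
Your proposal is correct and takes essentially the same route as the paper. The paper's own proof is terser: it observes that by Theorem~\ref{QH-inv} all invariants are polynomials in $\f$ with formal coefficients, hence analytic in $\f\in\mathbb{C}$, equivalently analytic in $q^\ell\in\mathbb{P}^1\setminus\{(-1)^{r+1}\}$, giving (i); it then states that (ii) follows from (i) and (iii) from \S\ref{s:3.2}, without spelling out the bookkeeping. You fill in exactly that bookkeeping for (ii) — the coordinate change $q^\ell\leftrightarrow 1/q^{\ell'}$ induced by $\T\ell=-\ell'$, the fact that it carries the excluded point $(-1)^{r+1}$ on one side to the excluded point on the other (since $(-1)^{-(r+1)}=(-1)^{r+1}$), and the compatibility with $\T\langle\alpha\rangle^X=\langle\T\alpha\rangle^{X'}$ — which is a welcome addition but not a different method. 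One small cosmetic point: you invoke $\f(q)+\f(q^{-1})=(-1)^r$ to see the excluded values match, whereas the simple observation that $1/(-1)^{r+1}=(-1)^{r+1}$ already does the job; either way the conclusion is right.
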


\begin{proof}
Indeed, Theorem~\ref{QH-inv} says that, as functions of $\f$, all
invariants are defined on $\f \in \mathbb{C}$. Equivalently, as
function of $q^{\ell}$, all invariants are defined on
$\mathbb{P}^1 \setminus {(-1)^{r+1}}$. This proves (i). (ii)
follows from (i), and (iii) from Section~\ref{s:3.2}
\end{proof}

Corollary~\ref{c:3.2} and results of the previous subsections show
that the Frobenius manifold structures on the quantum cohomology
of $X$ and $X'$ are isomorphic. The former is a series expansion
of analytic functions at $q^{\ell} =0$, and the latter at
$q^{\ell} = \infty$. Considered as a one-parameter family
\[
 H_X \to  \mathbb{P}^1_{q^{\ell}} \setminus (-1)^{r+1},
\]
it produces a family of product structure on
$\widehat{H}_X^{\perp_\ell} \otimes {\widehat{\mathbb{C}[\N]}}$.
At two special points $0$ and $\infty$, the Frobenius structure
specializes to the big quantum cohomology modulo extremal rays of
$X$ and of $X'$ respectively.
The term ``analytic continuation'' used in this paper can thus
be understood in this way.

\subsection{Semisimplicity of big quantum ring for local models}
\label{s-s-local}

Toric varieties admits a nice big torus action and its equivariant
cohomology ring is always semisimple, hence as a deformation the
equivariant big quantum cohomology ring (the Frobenius manifold)
is also semisimple. Givental's quantization formalism works in
the equivariant setting, hence one way to prove the higher genus
invariance for local models is to extend results in \cite{LLW} to
the equivariant setting. This can in principle be done, but here
we take a direct approach which requires no more work.

\begin{lemma} \label{s-s}
For $X = \mathbb{P}_{\mathbb{P}^r}(\scr O(-1)^{r + 1} \oplus \scr
O)$, $QH^*(X)$ is semisimple.
\end{lemma}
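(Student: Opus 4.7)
The plan is to write down the small quantum cohomology of the local model $X$ explicitly via the Batyrev--Givental presentation for semi-Fano toric manifolds, verify that it is generically reduced, and then upgrade generic semisimplicity of the small quantum product to semisimplicity of the big quantum cohomology by openness of the semisimple locus on the Frobenius manifold.

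First I record that $X = \mathbb{P}_{\mathbb{P}^r}(\mathscr{O}(-1)^{\oplus(r+1)} \oplus \mathscr{O})$ is a smooth projective toric variety of Picard rank two with $-K_X = (r+2)\,\xi$ nef, so $X$ is semi-Fano. Its Mori cone is generated by the extremal ray $\ell$ (a line in the base $\mathbb{P}^r$) and the fiber line $\gamma$, and its classical cohomology is
\begin{equation*}
H^*(X, \mathbb{C}) = \mathbb{C}[h, \xi] \big/ \bigl(h^{r+1},\ \xi(\xi-h)^{r+1}\bigr),
\end{equation*}
of total dimension $(r+1)(r+2)$, where $h$ is the pullback of the hyperplane class of $\mathbb{P}^r$ and $\xi$ is the tautological class.

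Next I would invoke Givental's mirror theorem for semi-Fano toric manifolds to obtain a Batyrev-type presentation
\begin{equation*}
QH^*_{\mathrm{sm}}(X) = \mathbb{C}[q^\ell, q^\gamma][h, \xi] \big/ \bigl(\widetilde{R}_1,\ \widetilde{R}_2\bigr),
\end{equation*}
where $\widetilde{R}_1, \widetilde{R}_2$ are the two primitive-collection relations quantized in the Novikov variables $q^\ell, q^\gamma$: the fiber relation $\xi(\xi-h)^{r+1} - q^\gamma$ is standard, while the base relation $h^{r+1}$ acquires a nontrivial deformation in $q^\ell$ governed by the rational function $\f(q^\ell)$ of \S3.1. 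Consistency with the genus zero invariants of \cite{LLW} recalled in Theorem \ref{QH-inv} provides an independent check on the form of these relations. I would then verify directly that for generic $(q^\ell, q^\gamma)$ the system $\widetilde{R}_1 = \widetilde{R}_2 = 0$ cuts out $(r+1)(r+2)$ reduced points in $(\mathbb{C}^*)^2$, which is a Jacobian (or resultant) computation in two variables.

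By the divisor axiom, varying the Novikov parameters $(q^\ell, q^\gamma)$ is the same as moving along the $H^2(X)$-direction of the Frobenius manifold, so the above shows that the small quantum product is semisimple at a generic point in this direction. Since the semisimple locus is open in the irreducible Frobenius manifold $\widehat{H}_X$, it is dense, and hence $(QH^*(X), *)$ is semisimple. The main obstacle will be pinning down the exact $q^\ell$-correction in $\widetilde{R}_1$: unlike the strictly Fano case, the negative summand $\mathscr{O}(-1)^{\oplus(r+1)}$ contributes extremal-ray invariants in every degree, producing a non-polynomial (but explicit) deformation controlled by $\f(q^\ell)$. Once this relation is in hand, the semisimplicity verification reduces to a direct two-variable check.
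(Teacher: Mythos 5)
Your overall strategy — a Batyrev-type presentation of the small quantum ring, a verification of semisimplicity there, then openness of the semisimple locus to conclude for the full formal Frobenius manifold — is exactly the paper's route. Where you diverge is in the two technical steps. First, your anticipated ``main obstacle,'' namely that the base relation $h^{r+1}$ should acquire a non-polynomial deformation controlled by $\f(q^\ell)$, does not materialize: the paper cites \cite{CoKa} (proof of Prop.~11.2.17) together with \cite{LLW}, Lemma~5.2, to show that for this particular local model the small quantum ring is \emph{already} the Batyrev ring, with polynomial relations
\begin{equation*}
QH^*_{small}(X) \cong \mathbb{C}[h,\xi][q_1,q_2]\big/\bigl(h^{r+1} - q_1(\xi-h)^{r+1},\ \xi(\xi-h)^{r+1} - q_2\bigr),
\end{equation*}
despite $X$ being only semi-Fano. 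The rational function $\f(q^\ell)$ is what emerges when one \emph{solves} these polynomial relations to read off three-point functions; it is not a coefficient in the defining relations. Second, rather than the generic Jacobian/resultant count you sketch, the paper simply solves the two relations for the eigenvalues of $h*$ and $\xi*$ in closed form (as fractional powers of $q_1, q_2$ twisted by $(r{+}1)$-st and $(r{+}2)$-nd roots of unity), observes they are pairwise distinct, and concludes semisimplicity directly; your resultant approach would also work once the correct polynomial relations are in hand, but the explicit eigenvalue solution is both shorter and more informative, as it exhibits the idempotents needed later for the quantization formalism.
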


\begin{proof}
By \cite{CoKa}, the proof of Proposition 11.2.17 and \cite{LLW},
Lemma 5.2, the small quantum cohomology ring is given by Batyrev's
ring (though $X$ is only semi-Fano). Namely for $q_1 = q^\ell$ and
$q_2 = q^\gamma$,
$$
QH^*_{small}(X) \cong \mathbb{C}[h, \xi][q_1, q_2] / (h^{r+1} -
q_1(\xi - h)^{r + 1}, (\xi - h)^{r + 1}\xi - q_2 ).
$$
Solving the relations, we get the eigenvalues of the quantum
multiplications $h*$ and $\xi*$:
\begin{equation}
h = \eta^j \omega^i q_1^{\frac{1}{r+1}} q_2^{\frac{1}{r + 2}}(1 +
\omega^i q_1^{\frac{1}{r + 1}})^{-\frac{1}{r + 2}},\quad \xi =
\eta^j q_2^{\frac{1}{r + 2}} (1 + \omega^i q_1^{\frac{1}{r +
1}})^{\frac{r + 1}{r + 2}}
\end{equation}
for $i = 0, 1, \cdots, r$ and $j = 0, 1, \cdots, r + 1$. where
$\omega$ and $\eta$ are the $(r + 1)$-th and the $(r + 2)$-th root
of unity respectively. As these eigenvalues of $h*$ (resp.~$\xi*$)
are all different, we see that $h*$ and $\xi*$ are semisimple
operators, hence $QH^*_{small}(X)$ is semisimple.

This proves that the formal Frobenius manifold $(QH^*, *)$ is
semisimple at the origin $s = 0$. Since semisimplicity is an
open condition, the formal Frobenius manifold $QH^*(X)$ is also
semisimple.
\end{proof}

\begin{remark}
The Batyrev ring for any toric variety, whether or not equal to
the small quantum ring, is always semisimple.
\end{remark}

\subsection{Invariance of mixed invariants of special type}

\begin{proposition} \label{p:local}
For the local models, the correspondence $\T$ for a simple flop
induces, after the analytic continuation, an isomorphism of the
ancestor potentials.
\end{proposition}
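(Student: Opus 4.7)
The plan is to use Givental's quantization formula~\eqref{e:ancestor} from Theorem~\ref{quantization} to reduce the invariance of the ancestor potential to the invariance of the underlying genus zero Frobenius manifold structure, which is already established in Theorem~\ref{QH-inv}. The strategy is legitimate because Lemma~\ref{s-s} guarantees that $QH^*(X_{loc})$ is semisimple, so Teleman's reconstruction applies and every ingredient on the right-hand side of~\eqref{e:ancestor} is determined by the genus zero data alone.

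First, I would upgrade Theorem~\ref{QH-inv} into an isomorphism of Frobenius manifolds. Corollary~\ref{c:3.2} already produces such an isomorphism $H_{X_{loc}} \cong H_{X'_{loc}}$ after analytic continuation, which on the local model is in fact an isomorphism of \emph{analytic} Frobenius manifolds over the extended K\"ahler base. The key additional fact is that $\T$ preserves the Poincar\'e pairing (established in~\cite{LLW}), so that the induced map on each tangent space is an isomorphism of Frobenius algebras, intertwining the Euler fields as well.

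Next, I would match each factor of~\eqref{e:ancestor} term by term. The idempotent frame $\{\tilde\epsilon_i\}$ and canonical coordinates $\mathbf{u}$ are canonical attributes of the semisimple Frobenius algebra (up to $S_N$ permutation), so $\T$ carries those of $X_{loc}$ to those of $X'_{loc}$; the transition matrix $\Psi$ follows from the frame together with the pairing; the $R$-matrix $R_X(s,z)$ is uniquely reconstructed from the semisimple Frobenius structure and the Euler field; and the scalar $\bar c(s) = \tfrac{1}{48}\ln\det(\epsilon_i,\epsilon_j)$ depends only on the pairing. Since $\T$ moreover extends to a symplectic isomorphism of the loop spaces $(\mathscr{H}_{X_{loc}},\Omega) \cong (\mathscr{H}_{X'_{loc}},\Omega)$, the Weyl quantization~\eqref{e:wq} is functorial with respect to $\T$, and the universal seed $\mathscr{D}_N(\mathbf{t})$ depends only on $N = \dim H$, which is common to both sides since a simple flop preserves Betti numbers. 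Applying $\T$ to~\eqref{e:ancestor} for $X_{loc}$ then yields the analogous formula for $X'_{loc}$, giving $\T\mathscr{A}_{X_{loc}} = \mathscr{A}_{X'_{loc}}$ after analytic continuation.

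The main obstacle I anticipate is tracking the analytic continuation uniformly across all these ingredients: while Theorem~\ref{QH-inv} and Corollary~\ref{c:3.2}(iii) provide honest analytic functions on $\mathbb{P}^1_{q^\ell}\setminus\{(-1)^{r+1}\}$ at the genus zero level, one must verify that $\Psi$, $R$, and $\mathbf{u}$ inherit this same analytic continuation \emph{simultaneously}, so that the identification under $\T$ is global rather than merely formal on each side. The analyticity of the local Frobenius manifold from Corollary~\ref{c:3.2}(iii) together with the functorial reconstruction of $R$ from semisimple Frobenius data with Euler field are the two features that make this feasible, and they are exactly what the present local setting provides (whereas in the global $X$ case one would only have formal analytic continuation and would need the degeneration analysis of Section~4 to transport this back).
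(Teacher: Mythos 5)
Your proposal matches the paper's own proof in essentially every respect: both invoke semisimplicity (Lemma~\ref{s-s}) so that Theorem~\ref{quantization} applies, and both reduce to checking that each ingredient $\bar c$, $\Psi$, $\mathbf{u}$, $R$ of the quantization formula transforms covariantly under $\T$, because each is canonically determined by the semisimple conformal Frobenius structure that $\T$ preserves by Theorem~\ref{QH-inv}. The only presentational difference is that the paper makes the covariance of $R$ fully explicit via the recursion $(R_n)_{ij}(du^i - du^j) = [(\Psi\,d\Psi^{-1} + d)R_{n-1}]_{ij}$ and the induction $\T R_n = R'_n$, whereas you cite the uniqueness of $R$ for semisimple conformal Frobenius manifolds more abstractly.
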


\begin{proof}
Since a flop induces $K$-equivalence, by (\ref{Euler}) the Euler
vector fields of $X$ and $X'$ are identified under $\T$. By
Theorem \ref{QH-inv} and Lemma \ref{s-s}, $X$ and $X'$ give rise
to isomorphic semisimple \emph{conformal} formal Frobenius
manifolds over $\R$ (or rather $\R_{loc}$):
$$
QH^*(X) \cong QH^*(X')
$$
under $\T$. The first statement then follows from Theorem
\ref{quantization}, the quantization formula, since all the
quantities involved are uniquely determined by the underlying
abstract Frobenius structure.

To be more explicit, to compare $\T \mathscr{A}_X$ with
$\mathscr{A}_{X'}$ is equivalent to compare $\T
(\widehat{\Psi}_X^{-1} \widehat{R}_X) e^{\widehat{\mathbf{u}}/z}$
with $\widehat{\Psi}_{X'}^{-1} \widehat{R}_{X'}
e^{\widehat{\mathbf{u'}}/z}$, and $\T \bar{c}$ with $\bar{c}'$.
Recall that
\[
 \epsilon_i :=  \partial_{u^i}, \quad
 \tilde{\epsilon}_i := \frac{\epsilon_i}
 {\sqrt{(\epsilon_i, \epsilon_i)}}.
\]

\begin{lemma}
$\T$ sends canonical coordinates on $X$ to canonical coordinates
on $X'$: $\T \epsilon_i = \epsilon'_{i}$, $\T \tilde{\epsilon}_i =
\tilde{\epsilon}'_{i}$. Moreover, $\bar{c}$, $\Psi$ and
$\mathbf{u}$ transform covariantly under $\T$.
\end{lemma}

\begin{proof}
As $\T$ preserves the big quantum product, $\T$ sends idempotents
$\{ \epsilon_i \}$ to idempotents $\{ \epsilon'_{i} \}$. Since the
canonical coordinates are uniquely defined for \emph{conformal}
Frobenius manifolds (up to $S_N$ permutation which is fixed by
$\T$), $\T$ takes canonical coordinates on $X$ to those on $X'$.
Furthermore, $\T$ preserves the Poincar\'e pairing \cite{LLW},
hence that $\T \tilde{\epsilon}_i = \tilde{\epsilon}'_{i}$.

The $\T$ covariance of $\bar{c}(\s) = \frac{1}{48} \ln \det
(\epsilon_i, \epsilon_j)$, the matrix $\mathbf{u}_{ij} =
(\delta_{ij} u^i)$ and the matrix ${\Psi}_{\mu i} =
(T_{\mu}, \tilde{\epsilon}_i)$ also follow immediately. For
example,
\[
\T {\Psi}_{\mu i} = (\T T_{\mu}, \T \tilde{\epsilon}_i).
\]
\end{proof}

The lemma implies that the Darboux coordinate systems on $X$ and
$X'$ defined by canonical coordinates are compatible under $\T$.
By the definition of the quantization process (\ref{e:wq}), which
assigns differential operators $\partial/\partial q^i_k$'s in an
universal manner under a Darboux coordinate system, it clearly
commutes with $\T$. It is thus enough to prove the invariance of
the semi-classical counterparts, or equivalently the
``covariance'' of the corresponding matrix functions, under $\T$.
Note that all the invariance and covariance are up to an analytic
continuation.

Therefore, one is left with the proof of the covariance of the $R$
matrix under $\T$, after analytic continuation. Namely $\T R(\s) =
R'(\T \s)$.

This follows from the uniqueness of $R$ for semisimple formal
conformal Frobenius manifolds. To be explicit, recall that in the
proof of \cite{LP2}, Theorem 1, the formal series $R(s, z) =
\sum_{n = 0}^\infty R_n(s) z^n$ of the $R$ matrix is recursively
constructed by $R_0 = {\rm Id}$ and the following relation in
canonical coordinates:
\begin{equation}
(R_n)_{ij}(du^i - du^j) = [(\Psi d \Psi^{-1} + d)R_{n - 1}]_{ij}.
\end{equation}
Applying $\T$ to it, we get $\T R_n = R'_n$ by induction on $n$.
\end{proof}

In order to generalize Proposition~\ref{p:local} to simple flops
of general smooth varieties, which will be carried out in the next
section by degeneration analysis, we have to allow descendent
insertions at the infinity marked points, i.e.~those marked points
where the cohomology insertions come from $j_* H^*(E) \subset
H^*(X)$.

\begin{theorem} \label{t:local}
For the local models, the correspondence $\T$ for a simple flop
induces, after the analytic continuation, an isomorphism of the
generating functions of  mixed invariants of special type in the
stable range.
\end{theorem}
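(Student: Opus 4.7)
The plan is an induction on the total descendent level $\sum_i k_i$ of the mixed insertions, with Proposition~\ref{p:1.1} as the main reduction tool. The base case of vanishing descendents is exactly Proposition~\ref{p:local} applied to the ancestor potential $\mathscr{A}_X$.

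For the inductive step, pick an insertion $\tau_{k+1,\bar{l}\,}\alpha_1$ with $k+1 \ge 1$; the $f$-special type hypothesis forces $\alpha_1.Z=0$, so $\alpha_1 \in j_*H^*(E)$. Apply Proposition~\ref{p:1.1} to rewrite the generating function as $\langle \tau_{k,\overline{l+1}\,}\alpha_1, \cdots\rangle_g(\bar{t},\s)$ plus the correction
\begin{equation*}
\sum_\nu \langle \tau_{k\,}\alpha_1, T_\nu\rangle_0(\s)\, \langle \tau_{\overline{l}\,}T^\nu, \cdots\rangle_g(\bar{t},\s).
\end{equation*}
The first summand has strictly smaller total descendent level and preserves the special type assumption (trivially if the new index $k=0$, otherwise because $\alpha_1.Z=0$ still holds). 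The second factor of the correction has the first insertion replaced by the pure ancestor $\tau_{\overline{l}\,}T^\nu$, which carries no special type constraint, and has one fewer descendent-bearing slot; the untouched insertions keep their $f$-special type. Both are $\T$-invariant by the induction hypothesis.

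This reduces the problem to proving $\T$-invariance of the genus zero two-point generating function $\langle \tau_{k\,}\alpha_1, T_\nu\rangle_0(\s)$ for $\alpha_1 \in j_*H^*(E)$, which I view as the main obstacle. The plan here has two prongs. First, the $q^{d\ell}$-components with $d>0$ vanish: every curve of class $d\ell$ lies in $Z$, so $\ev_1$ factors through the inclusion $i_Z:Z \hookrightarrow X$, and the base change $i_Z^* j_* = 0$ (coming from $Z \cap E = \emptyset$) yields $\ev_1^*\alpha_1 = 0$. Hence no non-trivial analytic continuation in the $\ell$-direction is needed. Second, for the remaining non-extremal contributions I would invoke Givental's formalism: Proposition~\ref{p:local} identifies the Frobenius manifold structures under $\T$, and hence also the fundamental solution $S(\s, z^{-1})$ of the quantum differential equation, of which $\langle \tau_{k\,}\alpha_1, T_\nu\rangle_0(\s)$ is (up to normalization) a matrix entry. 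An alternative route is to iterate genus zero topological recursion relations to reduce to three-point cohomological invariants handled by Theorem~\ref{QH-inv}. The crux is that generic descendent invariants are \emph{not} $\T$-invariant; the $f$-special type hypothesis $\alpha_1.Z = 0$ is precisely what kills the problematic extremal $q^{d\ell}$-contributions and aligns the remaining $\T$-action with analytic continuation on the extended K\"ahler moduli. Once this two-point case is settled, the induction closes and the theorem follows.
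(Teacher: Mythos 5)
Your overall reduction matches the paper: induct via Proposition~\ref{p:1.1} on the total descendent power, with Proposition~\ref{p:local} as the base, and isolate the genus-zero two-point series $\langle \tau_k\alpha_1, T_\nu\rangle_0(\s)$ with $\alpha_1 \in j_*H^*(E)$ as the crux. Your first prong --- that the purely extremal contributions ($\beta = d_1\ell$ with $d_1 > 0$) vanish because $\ev_1$ factors through $Z$ while $\alpha_1$ is supported on $E$ and $E\cap Z = \emptyset$ --- is also correct and matches the paper's treatment of the $d_2 = 0$ piece.

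The gap is in your second prong. The fundamental solution $J(\s, z^{-1})$ of the quantum differential equation is \emph{not} determined by the Frobenius manifold structure alone: what picks it out among all flat sections is the normalization $J = \mathrm{Id} + O(z^{-1})$ \emph{as a formal $z^{-1}$-series over the Novikov ring}, and that formal structure is tied to $NE(X)$ on one side and $NE(X')$ on the other. Analytic continuation swaps these cones, so $\T J_X$ need not equal $J_{X'}$ even after Proposition~\ref{p:local} has identified the Frobenius structures. This is precisely why the descendent potential fails to be $\T$-invariant while the ancestor potential is; the only quantities that are recursively determined in a Novikov-independent, canonical-coordinate fashion are $\Psi$, $\mathbf{u}$ and the $z$-power series $R$, not the $z^{-1}$-series $J$. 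Killing the $d_2 = 0$ contributions does not repair this: the $d_2 > 0$ subseries still carry arbitrary powers $q^{d_1\ell}$ and still live in the $J$-picture. Your TRR alternative is likewise underspecified --- iterating the genus-zero TRR reintroduces sums over all splittings and curve classes, and the $\T$-invariance of each piece is exactly what must be proved, not assumed. The paper resolves this by first noting that each subseries (fixed insertions and $l$) supports a \emph{single} contact weight $d_2$ by virtual dimension count, and then, for $d_2 > 0$, citing Theorem~5.6 of \cite{LLW}, the genus-zero invariance of descendent generating functions of special type on local models. That result is the ingredient your argument is missing and cannot be replaced by the $S$-matrix covariance you invoke.
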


\begin{proof}
Using Proposition~\ref{p:local} and \ref{p:1.1} and by induction
on the power $k$ of descendent, the theorem is reduced to the case
of $g = 0$ and with exactly one descendent insertion. It is of the
form $\langle \tau_k \alpha, T_{\nu} \rangle_0 (s)$ with $k \ge 0$
and by our assumption $\alpha \in j_*H^*(E)$. (Notice that for $s
= 0$ this is not in the stable range.) This series is a formal sum
of subseries
$$
\langle \tau_k \alpha, T_{\nu}, T_{\mu_1}, \cdots T_{\mu_l}
\rangle_{0, 2 + l}
$$
which are sums over $\beta \in NE(\tilde E)$. Each such series
supports a unique $d_2 \ge 0$ in $\beta = d_1 \ell + d_2 \gamma$.
If $d_2 = 0$ then the series and its counterpart in $X' = \tilde
E'$ (which supports the same $d_2$) are both trivial since
$\alpha$ is supported in $E$. If $d_2 > 0$, then the invariance
follows from \cite{LLW}, Theorem 5.6.
\end{proof}

We will generalize the theorem into the form of Theorem
\ref{t:main-2} by removing the local model condition after we
discuss the degeneration formula.

\begin{remark}
By section \ref{s-s-local} and the proof of Proposition
\ref{p:local}, the canonical coordinates $u^i$'s, idempotents
$\epsilon_i$'s, hence the transition matrix $\Psi$ and the $R$
matrix can all be solved in some integral extension $\tilde
\R_{loc}$ of $\R_{loc}$. It is interesting to know whether all
genus $g$ ancestor $n$-point generating functions take value in
$\tilde \R_{loc}$ and $\T \langle \tau_{\bar{l}\,} \alpha
\rangle^X_g = \langle \tau_{\bar{l}\,} \T \alpha \rangle^{X'}_g$
in $\tilde \R_{loc}$. This is plausible from Theorem
\ref{quantization} since the quantization process requires no
further extensions. In fact the calculation in genus one in
the Appendix suggests that $\langle \tau_{\bar{l}\,} \alpha \rangle^X_g$
might belong to $\R_{loc}$.
\end{remark}

\section{Degeneration analysis} \label{s:degeneration}

Let $f:X \dasharrow X'$ be a simple $\mathbb{P}^r$ flop with $\T$
being the graph correspondence. To prove Theorem \ref{t:main-2},
we need to show that, up to analytic continuation,
$$
\T \langle \tau_{k, \bar{l}\,} \alpha \rangle^X_g = \langle
\tau_{k, \bar{l}\,} \T \alpha \rangle^{X'}_g
$$
for all $\tau_{k, \bar l\,} \alpha = (\tau_{k_1, \bar l_1}
\alpha_1, \ldots, \tau_{k_n, \bar l_n} \alpha_n)$ being of
$f$-special type and $g \ge 0$ with $2g + n \ge 3$.

We follow the same strategy employed in Section~4 of \cite{LLW}.
The two minor changes are
\begin{enumerate}
\item to generalize primary invariants to ancestors (and
descendents); \item to generalize genus zero invariants to
arbitrary genus.
\end{enumerate}
Since it is quite straightforward to make the necessary
modifications, we will simply comment on the necessary changes and
ask the interested readers to consult Section~4 of \cite{LLW} for
further details.

The first step is to apply degeneration to the normal cone
$$
W = {\rm Bl}_{Z \times \{0\}} X \times \mathbb{A}^1 \to
\mathbb{A}^1.
$$
$W_0 = Y_1 \cup Y_2$, $Y_1 = Y = {\rm Bl}_Z X
\mathop{\to}\limits^\phi X$ and $Y_2 = \tilde E = \mathbb{P}_Z
(N_{Z/X} \oplus \scr O) \mathop{\to}\limits^p Z$. $E = Y \cap
\tilde E$ is the $\phi$ exceptional divisor as well as the
infinity divisor of $\tilde E$.

Define the generating series for genus $g$ (connected) relative
invariants
\begin{equation}
\langle A \mid \e, \mu \rangle^{(\tilde{E}, E)}_g
 := \sum_{\beta_2 \in NE(\tilde E)}
 \frac{1}{|{\rm Aut}\,\mu|} \langle A \mid \e, \mu
 \rangle_{g,\beta_2}^{(\tilde E, E)}\,\q^{\beta_2}
\end{equation}
and the similar one with possibly disconnected domain curves
\begin{equation}
\langle A \mid \e, \mu \rangle^{\bullet (\tilde E, E)} :=
\sum_{\Gamma;\, \mu_\Gamma = \mu} \frac{1}{|{\rm Aut}\,\Gamma|}
\langle A \mid \e, \mu \rangle_{\Gamma}^{\bullet (\tilde E,
E)}\,\q^{\beta^\Gamma}\,\hbar^{g^\Gamma - |\Gamma|}.
\end{equation}
Here for connected invariants of genus $g$ we assign the
$\hbar$-weight $\hbar^{g - 1}$, while for disconnected ones we
simply assign the product weights.

Since the degeneration formula is really about the degeneration of
the virtual cycles, the ancestors and descendents obey the same
formula. Therefore, Proposition~4.6 of \cite{LLW} can be
generalized into the following form:

\begin{proposition}[Reduction to relative local models]  \label{p:4.6}
To prove
\[
\T \langle \tau_{k, \bar{l}\,} \alpha \rangle^X_g = \langle
\tau_{k, \bar{l}\,} \T \alpha \rangle^{X'}_g
\]
for all $\alpha$ and $k, l$, it suffices to show
\[
\T \langle \tau_{k, \bar{l}\,} A \mid \e, \mu
\rangle^{(\tilde{E},E)}_h = \langle \tau_{k, \bar{l}\,} \T A \mid
\e, \mu \rangle^{(\tilde{E}',E)}_h
\]
for all $A \in H^*(\tilde E)^n$, $k, l\in \mathbb{Z}_+^n$, $\e \in
H^*(E)^\rho$, contact type $\mu$, and all $h \le g$.
\end{proposition}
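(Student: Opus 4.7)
The plan is to follow the strategy of \S4 of \cite{LLW}: apply the degeneration formula to the degeneration to the normal cone $W\to\mathbb{A}^1$ with central fibre $W_0=Y\cup\tilde E$, and exploit the fact that $Y={\rm Bl}_Z X={\rm Bl}_{Z'}X'$ is the \emph{same} smooth variety, realizing the graph closure $\bar\Gamma_f$ on both sides of the flop $f:X\dashrightarrow X'$. Since $Y$ is common to both degenerations, the entire birational discrepancy is concentrated on the local factors $(\tilde E,E)$ and $(\tilde E',E)$, so invariance of relative invariants on the local model will yield invariance on $X$ term by term in the degeneration sum.

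Concretely, the degeneration formula reads
\[
\langle \tau_{k,\bar l\,}\alpha\rangle^{\bullet X}_g
=\sum_\mu m(\mu)\sum_I \langle\tau_{k,\bar l\,}\alpha_1\mid\e_I,\mu\rangle^{\bullet(Y,E)}\langle\tau_{k,\bar l\,}\alpha_2\mid\e^I,\mu\rangle^{\bullet(\tilde E,E)},
\]
with the marked points (and the $\psi$ and $\bar\psi$ classes attached to them) split between the two relative pairs in all possible ways. Because descendents $\psi_i$ are defined locally at marked points and ancestors $\bar\psi_i=\pi^*\psi_i$ are pulled back from the target-independent moduli $\overline{M}_{g,m}$, both classes distribute along the splitting of the virtual fundamental class exactly as primary insertions do in the genus-zero arguments of \cite{LLW}, and the degeneration formula takes the same shape in the ancestor/descendent setting.

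The $(Y,E)$ factor, as a relative invariant on the common variety $Y$, transforms covariantly under $\T$: writing $\T=(\phi')_*\phi^*$, the $f$-special hypothesis $\alpha_i\cdot Z=0$ whenever $k_i\neq 0$ ensures $\phi^*\alpha=(\phi')^*\T\alpha$ on $Y$, so that descendent classes on the $Y$ side match between $X$ and $X'$; and the Novikov relation $\T\phi_*\beta_1=\phi'_*\beta_1+|\mu|\ell'$ is absorbed by a Novikov shift on the $(\tilde E',E)$ side, exactly as in \cite{LLW}. Once the $(Y,E)$ factors are identified between the two sides, invariance of $\langle\tau_{k,\bar l\,}\alpha\rangle^X_g$ reduces term-by-term to invariance of the relative local-model generating function $\langle\tau_{k,\bar l\,}A\mid\e,\mu\rangle^{(\tilde E,E)}_h$ for all $h\le g$.

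The main obstacle is the verification that the ancestor class $\bar\psi_i$ interacts correctly with the splitting of the virtual fundamental class on $W_0$ --- specifically, that the factorization $\pi=\ft\circ\st$ commutes with the degeneration of the target. This should follow cleanly from the fact that $\overline{M}_{g,m}$ is independent of the target and that the stabilization morphism is compatible with the degeneration, but it requires careful bookkeeping of multiplicities $m(\mu)$, contact types, and automorphism factors in the disconnected formula across all splitting types in the stable range $2g+n\ge3$. Modulo this bookkeeping, which parallels the primary, genus-zero case of \cite{LLW} almost word for word, the reduction is complete.
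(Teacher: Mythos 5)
Your overall strategy is the same as the paper's: apply the degeneration to the normal cone $W_0 = Y_1\cup \tilde E$, note that the $(Y_1,E)$ relative invariants are common to $X$ and $X'$, and thereby reduce the comparison to the relative local model $(\tilde E, E)$. The observation that ancestors and descendents obey the same degeneration formula because it is a statement about virtual cycles is also correct and is the key observation the paper uses.

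However, there are two genuine gaps in your write-up. First, you attribute the matching of the $(Y_1,E)$ factors to the $f$-special hypothesis $\alpha\cdot Z = 0$ and claim $\phi^*\alpha=(\phi')^*\T\alpha$. That is not the mechanism here: Proposition~\ref{p:4.6} is stated for \emph{all} $\alpha$, not just $f$-special ones, and the $f$-special condition enters only later (in the proof of the main theorem, where it is used to force nontrivial descendent insertions onto the $(Y_1,E)$ side). What actually matches the $(Y_1,E)$ factors is the freedom to choose cohomology liftings $\alpha\mapsto(\alpha_1,\alpha_2)$ with $\alpha_1'=\alpha_1$ and $\alpha_2'=\T\alpha_2$, as in Proposition~4.4 of \cite{LLW}; without invoking this lifting argument explicitly, the term-by-term identification you assert is not justified. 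Second, the "all $h\le g$" in the conclusion needs an actual argument, which you omit. In the degeneration formula the $\hbar$-weights recombine as $g-1 = \sum_i (g^{\Gamma_i}-|\Gamma_i|)+\rho$, so the genus-$g$ absolute invariant pulls in relative invariants of all genera $h\le g$; one must carry out the sum with these weights, split the disconnected relative invariants into connected pieces, and then truncate modulo $\hbar^{g+1}$ to isolate a fixed genus. This is more than "bookkeeping paralleling the genus-zero case": the genus-zero argument in \cite{LLW} has no $\hbar$ tracking at all, and the genus recombination and modular truncation are exactly the new ingredient that makes the higher-genus statement go through.
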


\begin{proof}
For the $n$-point mixed generating function
$$
\langle \tau_{k, \bar l\,}\alpha \rangle^X = \sum_g \langle
\tau_{k, \bar l\,}\alpha \rangle^X_g \,\hbar^{g - 1} =
\sum_{g;\,\beta \in NE(X)} \langle \tau_{k, \bar l\,}\alpha
\rangle^X_{g,\beta}\, \q^\beta\,\hbar^{g - 1},
$$
the degeneration formula gives (let $m(\mu) = \prod \mu_i$,
$C_\eta = m(\mu)/|{\rm Aut}\,\eta|$):
\begin{align*}
&\langle \tau_{k, \bar l\,}\alpha \rangle^X \\
&= \sum_{\beta \in NE(X)}\sum_{\eta \in \Omega_\beta} \sum_I
C_\eta \langle \tau_{k_1, \bar{l}_1}\alpha_1 \mid \e_I, \mu
\rangle_{\Gamma_1}^{\bullet (Y_1,E)} \langle \tau_{k_2,
\bar{l}_2}\alpha_2 \mid \e^I, \mu \rangle_{\Gamma_2}^{\bullet
(Y_2,E)}\,\q^{\phi^*\beta}\,
\hbar^{g - 1}\\
&= \sum_{\mu} \sum_I \sum_{\eta \in \Omega_\mu} C_\eta
\left(\langle \tau_{k_1, \bar{l}_1}\alpha_1 \mid \e_I, \mu
\rangle_{\Gamma_1}^{\bullet
(Y_1,E)}\,\q^{\beta_1}\,\hbar^{g^{\Gamma_1} - |\Gamma_1|}\right)
\\
&\qquad \qquad \qquad \qquad \times \left(\langle \tau_{k_2,
\bar{l}_2}\alpha_2 \mid \e^I, \mu \rangle_{\Gamma_2}^{\bullet
(Y_2,E)}\,\q^{\beta_2}\,\hbar^{g^{\Gamma_2} - |\Gamma_2|}\right)
\hbar^\rho,
\end{align*}
where we have used $g - 1 = \sum_i(g^{\Gamma_i} - |\Gamma_i|) +
\rho$ with $\rho$ being the number of contact points. Notice that
$\beta = \phi_* \beta_1 + p_* \beta_2$ and we identify
$q^{\beta_1} = q^{\phi_* \beta_1}$, $q^{\beta_2} = q^{p_*
\beta_2}$ throughout our degeneration analysis.

We consider also absolute invariants $\langle \tau_{k,
\bar{l}\,}\alpha \rangle^{\bullet X}$ with product weights in
$\hbar$. Then by comparing the order of automorphisms,
\begin{equation}
\langle \tau_{k, \bar{l}\,}\alpha \rangle^{\bullet X} = \sum_{\mu}
m(\mu) \sum_I \langle \tau_{k_1, \bar{l}_1}\alpha_1 \mid \e_I, \mu
\rangle^{\bullet (Y_1,E)} \langle \tau_{k_2, \bar{l}_2}\alpha_2
\mid \e^I, \mu \rangle^{\bullet (Y_2,E)}\,\hbar^\rho.
\end{equation}

To compare $\T \langle \tau_{k, \bar{l}\,}\alpha \rangle^{\bullet
X}$ and $\langle \tau_{k, \bar{l}\,}\T \alpha \rangle^{\bullet
X'}$, by \cite{LLW}, Proposition 4.4, we may assume that $\alpha_1
= \alpha_1'$ and $\alpha_2' = \T\alpha_2$. This choice of
cohomology liftings identifies the relative invariants of $(Y_1,
E)$ and those of $(Y'_1, E')$ with the same topological types. It
remains to compare
$$
\langle \tau_{k_2, \bar{l}_2}\alpha_2 \mid \e^I, \mu
\rangle^{\bullet (\tilde E, E)} \quad \mbox{and} \quad \langle
\tau_{k_2, \bar{l}_2}\alpha_2 \mid \e^I, \mu \rangle^{\bullet
(\tilde E', E)}.
$$

We further split the sum into connected invariants. Let
$\Gamma^\pi$ be a connected part with the contact order $\mu^\pi$
induced from $\mu$. Denote $P: \mu = \sum_{\pi \in P} \mu^\pi$ a
partition of $\mu$ and $P(\mu)$ the set of all such partitions.
Then
$$
\langle A \mid \s, \mu \rangle^{\bullet (\tilde E, E)} = \sum_{P
\in P(\mu)} \prod_{\pi \in P} \sum_{\Gamma^\pi} \frac{1}{|{\rm
Aut}\,\mu^\pi|} \langle A^\pi \mid \s^\pi, \mu^\pi
\rangle_{\Gamma^\pi}^{(\tilde E, E)}\,\q^{\beta^{\Gamma^\pi}}\,
\hbar^{g^{\Gamma^\pi} - 1}.
$$

In the summation over $\Gamma^\pi$, the only index to be summed
over is $\beta^{\Gamma^\pi}$ on $\tilde{E}$ and the genus. This
reduces the problem to $\langle A^\pi \mid \s^\pi, \mu^\pi
\rangle^{(\tilde{E}, E)}_g$.

Instead of working with all genera, the proposition follows from
the same argument by reduction modulo $\hbar^{g + 1}$.
\end{proof}

\begin{proposition}[Relative to absolute] \label{p:4.8}
For a simple flop $\tilde{E} \dashrightarrow \tilde{E}'$, to prove
\[
\T \langle \tau_{\bar{l}\,} A \mid \e, \mu
\rangle^{(\tilde{E},E)}_g = \langle \tau_{\bar{l}\,} \T A \mid \e,
\mu \rangle^{(\tilde{E}',E)}_g
\]
for all $A, l, \e, \mu$, it suffices to show for mixed invariants
of special type
\[
\T \langle \tau_{\bar{l}\,} A, \tau_{k\,} \e \rangle^{\tilde{E}}_h
= \langle \tau_{\bar{l}\,} \T A, \tau_{k\,} \e
\rangle^{\tilde{E}'}_h
\]
for all $A, l, \e$ and $k \in \mathbb{Z}_+^\rho$, and all $h \le
g$.
\end{proposition}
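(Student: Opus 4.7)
The plan is to invert the degeneration formula via induction on the complexity of the contact profile $\mu$, generalizing the Maulik--Pandharipande relative-to-absolute correspondence (used for genus zero primary invariants in LLW, Proposition~4.8) to ancestors and higher genus. I consider the deformation to the normal cone of $E \subset \tilde E$,
\[
 W = {\rm Bl}_{E \times \{0\}}(\tilde E \times \mathbb{A}^1) \to \mathbb{A}^1,
\]
whose central fiber is $\tilde E \cup_E P$ with $P := \mathbb{P}_E(N_{E/\tilde E} \oplus \mathscr{O})$ a $\mathbb{P}^1$-bundle cap. Since both $E$ and $P$ are disjoint from the flop locus $Z \subset \tilde E$, the cap $P$ and all relative invariants of $(P, E)$ are tautologically identified under $\T$; that is, $P = P'$ and $\langle \cdots \rangle^{(P,E)} = \langle \T(\cdots) \rangle^{(P',E')}$.

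Applying the degeneration formula to the absolute mixed invariant
\[
 \langle \tau_{\bar{l}\,} A,\, \tau_{\mu - 1\,}\e \rangle^{\tilde E}_h,
 \qquad \tau_{\mu-1}\e := (\tau_{\mu_1 - 1}\e_1, \ldots, \tau_{\mu_\rho - 1}\e_\rho),
\]
the hypothesis $\e_i \in j_*H^*(E)$ together with the descendent $\psi^{\mu_i - 1}$ concentrates each such marked point on the $P$-side and pins the contact order there to be exactly $\mu_i$ (this is the standard ``Faber--Pandharipande cap'' identity for $\mathbb{P}^1$-bundles). Extracting the top stratum yields a triangular identity
\[
 \langle \tau_{\bar{l}\,} A,\, \tau_{\mu - 1\,}\e \rangle^{\tilde E}_h
 = C_\mu\, \langle \tau_{\bar{l}\,} A \mid \e, \mu \rangle^{(\tilde E, E)}_h
 + R(\mu, h),
\]
with $C_\mu = \prod_i \mu_i / |{\rm Aut}\,\mu| \neq 0$ coming from the explicit cap computation, and $R(\mu, h)$ a $\mathbb{Q}$-linear combination of relative invariants of $(\tilde E, E)$ of strictly smaller complexity --- in genus, total contact weight, or number of contact points --- paired with tautologically $\T$-invariant factors on $(P, E)$.

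I then induct on the lexicographic triple $(g, |\mu|, \rho(\mu))$. The base case $\mu = \emptyset$ reduces relative invariants to absolute ancestor invariants, covered by the hypothesis with $k = 0$. For the inductive step, solving the triangular relation gives
\[
 \langle \tau_{\bar{l}\,} A \mid \e, \mu \rangle^{(\tilde E, E)}_h
 = C_\mu^{-1}\bigl(\langle \tau_{\bar{l}\,} A,\, \tau_{\mu - 1\,}\e \rangle^{\tilde E}_h - R(\mu, h)\bigr).
\]
The first summand is $\T$-invariant by the hypothesis of the proposition, as it is a mixed invariant of $f$-special type (since $\e.Z = 0$ for $\e \in j_*H^*(E)$). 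The error term $R(\mu, h)$ is $\T$-invariant by the inductive hypothesis combined with the $\T$-triviality of the $(P, E)$ factors. The $\T$-covariance of the degeneration formula itself follows from the canonical identification of $E$, its tubular neighborhood in $\tilde E$, and all cohomology classes away from $Z$ across the flop (cf.~LLW, Proposition~4.4).

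The main obstacle is the clean verification of the triangular structure: establishing $C_\mu \neq 0$ and characterizing the error $R(\mu, h)$ precisely. This requires a uniform evaluation of rubber/cap invariants on $(P, E)$ with contact profile $\mu$ at one end and descendent insertion $\tau_{\mu - 1}\e$ at the opposite end, which reduces via the projective bundle structure to the Faber--Pandharipande formula for $\mathbb{P}^1$ targets tensored with classes on $E$. In higher genus one must additionally use the connected-to-disconnected relation via the $\hbar$-grading (introduced just above the proposition) to ensure that the cap side only contributes to genus strata of strictly smaller inductive weight. Beyond this bookkeeping, no essentially new idea is needed past the $g = 0$ argument of LLW.
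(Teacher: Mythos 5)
Your proposal is essentially the paper's own proof: deform $\tilde E$ to the normal cone of $E$ to obtain $W_0 = \tilde E \cup_E P$ with cap $P = \mathbb{P}_E(\mathscr{O}_E(-1,-1)\oplus\mathscr{O})$, use the fiber-integral structure of the $(P,E)$ relative invariants to isolate a single top-order term with nonzero coefficient $C(\mu)$ equal to the target relative invariant, and induct to absorb the remainder $R$. One bookkeeping point to fix: the paper's induction is on $(g,|\mu|,n,\rho)$, where the number $n$ of interior insertions must be tracked (interior classes can migrate to the $(P,E)$ side) and where $\rho$ is taken in the \emph{reverse} ordering --- so ``fewer contact points'' is not the right notion of lower complexity; the error terms can have finer contact profiles with the same $|\mu|$.
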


\begin{proof}[Sketch of Proof]
We apply degeneration to the normal cone for $Z \hookrightarrow
\tilde E$ to get $W \to \mathbb{A}^1$. Then $W_0 = Y_1 \cup Y_2$
with $\pi: Y_1 \cong \mathbb{P}_E(\mathscr{O}_E(-1, -1)\oplus
\mathscr{O}) \to E$ a $\mathbb{P}^1$ bundle and $Y_2 \cong \tilde
E$.

By induction on $g$ and then on $(|\mu|, n, \rho)$ with $\rho$ in
the reverse ordering, the same procedure used in the proof of
\cite{LLW}, Proposition 4.8 leads to
\begin{align*}
&\langle \tau_{\bar l\,} A, \tau_{\mu_1 - 1}\e_{i_1}, \ldots,
\tau_{\mu_\rho - 1} \e_{i_\rho} \rangle^{\bullet\tilde E}_g
= \sum_{\mu'} m(\mu') \times \\
&\quad \sum_{I'} \langle \tau_{\mu_1 - 1} \e_{i_1}, \ldots,
\tau_{\mu_\rho - 1} \e_{i_\rho} \mid \e^{I'}, \mu'
\rangle^{\bullet (Y_1, E)}_0 \langle \tau_{\bar l\,} A \mid
\e_{I'}, \mu' \rangle^{(\tilde E, E)}_g + R,
\end{align*}
where $R$ denotes the remaining terms which either have lower
genus or have total contact order smaller than $d_2 = |\mu| =
|\mu'|$ or have number of insertions fewer than $n$ on the
$(\tilde E, E)$ side or the invariants on $(\tilde E, E)$ are
disconnected ones.

For the main terms, the integrals on $(Y_1, E)$ are all fiber
integrals and this allows to conclude that there is a single top
order term in the sum given by
$$
C(\mu)\langle \tau_{\bar l\,} A \mid \e_I, \mu\rangle^{(\tilde E,
E)}
$$
with $C(\mu) \ne 0$. Thus the proposition follows by induction.

\end{proof}

\begin{proof}[Proof of Main Theorems]
We only need to prove Theorem \ref{t:main-2}.

By Proposition \ref{p:4.6}, the theorem is reduced to the relative
local case. Moreover, for any insertion $\tau_{k,
\bar{l}\,}\alpha$ with nontrivial descendent ($k \ge 1$), we may
select the cohomology lifting of $\alpha$ to be $(\alpha, 0)$. To
avoid trivial invariants this insertion must go to the $(Y_1, E)$
side in the degeneration formula. Hence the theorem is reduced to
the case of relative local models $X = \tilde E =
\mathbb{P}_{\mathbb{P}^r} (\mathscr{O}(-1)^{r+1} \oplus
\mathscr{O})$ with at most ancestor insertions.

Now by Propositions \ref{p:4.8}, the proof is further reduced to
the case  for absolute invariants of the form
$$
\langle \tau_{\bar{l}\,} A, \tau_{k\,} \e \rangle^{\tilde{E}}_g
$$
which are mixed invariants of special type. But this is exactly
the content of Theorem \ref{t:local}. The proof is complete.
\end{proof}

\begin{remark}
The proof also shows that nontrivial descendent invariants of
$f$-special type without $2g + n \ge 3$, that is $(g, n) = (0, 1)$
or $(0, 2)$, are also invariant under $\T$.
\end{remark}

\section{Explicit formulae for primary invariants\\
attached to the extremal ray}

In this section we specialize our curve classes 
to the extremal ray and investigate the invariance in more explicit terms.
Note that all Gromov--Witten invariants discussed here are primary.

\subsection{Generalities concerning flopping curves}
\label{background}

Let $\dim X \ge 3$. In general, for $\ell$ being a curve class
with $(K_X.\ell) = 0$, the virtual dimension of
$\Mbar_{g, n}(X, d\ell)$ is given by
\begin{equation}
D_{g, n} = (\dim X - 3)(1 - g) + n
\end{equation}
which is independent of $d$. If moreover $\ell$ is a log-extremal
ray of flopping type (e.g.~in our case $\ell$ is the line class of
$Z$), $\langle \alpha \rangle_{g,n, d\ell}$ depends only on the
local geometry of $(Z, N_{Z/X})$ for $ d\ge 1$. But for $d = 0$ it
depends on the global geometry of $X$.

If $D_{g,n}$ is negative, all Gromov--Witten invariants must vanish.
From fundamental class axiom, all primary invariants
$\langle 1, \cdots \rangle_{g,n,d\ell}$ must vanish if
$\Mbar_{g,n-1}(X,d\ell)$ exists. We are therefore left with 3 cases:
$g=0$, $g=1$, $\dim X =3$ (and $g\ge 2$).

\vspace{5pt}
$g = 0$ then $D_{g, 3} = \dim X$ and the 3-point functions with
$d \ge 1$ are expected to correct the classical cubic product
corresponding to $d = 0$, which is indeed the case for simple
flops. There is no $n$-point invariant with $n \ge 4$ and $d = 0$.
In fact for simple flops the $n$-point functions with $n \ge 4$,
$d \ge 1$ are invariant under $\T$.

\vspace{5pt}
If $g = 1$ then $D_{1, n} = n$. By the fundamental class axiom
each cohomology insertion must be a divisor.
Hence if $d \ge 1$ by the divisor axiom the $n$-point
invariants are determined by the ``partition function''
$$
\int_{[\Mbar_{1,0}(X, d \ell)]^{\vir}} 1.
$$
For $d = 0$ and $n \ge 2$, the divisor axiom shows that $\langle \alpha
\rangle_{1, n, 0} = 0$.
$n = 1$ case requires different consideration.

Indeed it is well known that $\overline M_{g, n}(X, 0) \cong X
\times \overline M_{g, n}$ and the virtual fundamental class is
given by
\begin{equation} \label{virt-0}
[\overline M_{g, n}(X, 0)]^{vir} = e(\mathcal{E})\cap [X \times
\overline M_{g, n}]
\end{equation}
where the obstruction bundle is given by $\mathcal{E} = \pi_1^*
T_X \otimes \pi_2^* \mathcal{H}_g^\vee$ with $\mathcal{H}_g$ being
the Hodge bundle. Let $\lambda_i = c_i(\mathcal{H}_g)$.

For $(g, n) = (1, 1)$ we clearly have
$$
c(\mathcal{E}) = c_{\rm top}(X) - c_{{\rm top} - 1}(X).\lambda_1.
$$
Thus for one point invariants we get a (semi-)classical term
\begin{equation} \label{g=1-defect}
\langle \alpha \rangle_{1,0}^X = -(c_{{\rm top} - 1}(X).\alpha)_X
\cdot \int_{\overline M_{1,1}} \lambda_1 = -\frac{1}{24}(c_{{\rm
top} - 1}(X).\alpha)_X,
\end{equation}
where the basic Hodge integral $\int_{\overline{M}_{1,1}}
\lambda_1 = 1/24$ is used.

For simple flops, we will verify that $\T\langle \alpha
\rangle_1^X = \langle \T\alpha \rangle_1^{X'}$ in the next
subsection by showing that the genus one invariants with $d \ge 1$
correct the semi-classical defect $\langle \alpha \rangle_{1,0}^X
- \langle \T\alpha \rangle_{1,0}^{X'}$.

\vspace{5pt}
For $\dim X = 3$ (and $g \ge 2$), $D_{g, n} = n$.
As in the $g = 1$ case we are reduced to consider the case $n = 0$.
For simple $\mathbb{P}^1$ flop, the extremal invariants with $d \ge 1$ are
determined by Faber and Pandharipande \cite{FP} to be
$$
\langle - \rangle_{g, d} := \int_{[\Mbar_{g,0}(X, d \ell)]^{\vir}} 1 = C_g\, d^{2g - 3}
$$
where $C_g = |\chi(M_g)|/(2g - 3)!$.
We claim that the generating function
\begin{equation}
\langle - \rangle_g := \sum_{d = 0}^\infty \langle - \rangle_{g,
d}\, q^{d} = \langle - \rangle_{g, 0} + C_g\, \delta^{2g - 3} \f,
\end{equation}
is invariant under $\T$ (since $2g - 3 \ge 1$), where the operator
$\delta$ is defined in \eqref{e:delta}.
The second term is invariant following the analysis in Section~\ref{s:3}.
For degree zero term, it is not difficult to see
$\langle - \rangle_{g,0}^X = \langle - \rangle_{g,0}^{X'}$:
The degeneration analysis in Section~\ref{s:degeneration} reduces
the proof to a corresponding statement for local models.
The local models of $X$ and $X'$ are both isomorphic to
$\mathbb{P}_{\mathbb{P}^1}(\scr O(-1)^2 \oplus \scr O)$, and hence
have the same degree zero invariants.
In fact it is not hard to see from (\ref{virt-0}) that
\begin{equation}
e(\mathcal{E}) = \frac{(-1)^g}{2} \big( c_3(X) - c_2(X).c_1(X)
\big)\lambda^3_{g - 1}.
\end{equation}
The invariance of these Chern numbers can be directly verified for
$\mathbb{P}^1$ flops.

\subsection{The genus one case}

Let $G$ be the genus one potential of twisted GW invariants on
$\scr{O}_{\bb P^r}(-1)^{r+1}$ without marked points. Namely, for
$\ell$ being the line class in $\mathbb{P}^r$,
$$
G(q) := \langle - \rangle_1 = \sum_{d \ge 0} \langle - \rangle_{1,
d\ell}\, q^d.
$$

For $r = 1$, $d \ge 1$, the formula
$$
\langle - \rangle_{1, d} = \frac{1}{12d}
$$
was first obtained by physical consideration in \cite{BCOV} and
later mathematically justified in \cite{GP}. Here, by using the
theory of semisimple Frobenius manifolds, we generalize it to all
$r \in \mathbb{N}$ and get
\begin{proposition}
For $d \in \mathbb{N}$,
\begin{equation} \label{g=1-general r}
\langle - \rangle_{1, d} = (-1)^{d(r + 1)}\frac{r + 1}{24d}.
\end{equation}
\end{proposition}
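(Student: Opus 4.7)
The plan is to combine the quantization formula (Theorem~\ref{quantization}) with the semisimplicity of $QH^*(X_{loc})$ established in Lemma~\ref{s-s} to obtain a closed form for $G(q) = \sum_{d\ge 0}\langle - \rangle_{1,d\ell}\,q^d$, and then to extract the coefficient of $q^d$ combinatorially. Since the extremal invariants $\langle - \rangle_{1,d\ell}$ depend only on the pair $(Z, N_{Z/X})$, they may be computed on $X_{loc} = \mathbb P_{\mathbb P^r}(\scr O(-1)^{r+1}\oplus\scr O)$; equivalently one may work with the twisted genus-one theory on $\mathbb P^r$ with bundle $\scr O(-1)^{r+1}$, whose Frobenius structure inherits semisimplicity from the ambient local model.

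By Theorem~\ref{quantization}, in the semisimple regime $\scr A_{X_{loc}} = e^{\bar c}\widehat\Psi^{-1}\widehat R_{X_{loc}}\,e^{\widehat{\mathbf{u}/z}}\scr D_N$. Extracting the $\hbar^0$ coefficient with no $\bar t$ insertion yields the standard Dubrovin--Givental closed-form expression for the genus-one free energy $\overline F_1(\s)$ in terms of the canonical coordinates $\{u^i\}$, the normalized idempotents $\{\tilde\epsilon_i\}$, and (through the $R$-matrix) Givental's $G$-function. Restricting along the extremal direction $\s = t h$, $q = q^\ell = e^t$, these ingredients are controlled by the roots of the Batyrev-type relation in Lemma~\ref{s-s}, which are indexed by the $(r+1)$-th roots of unity $\omega^i$, $i = 0,\dots,r$.

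The remaining step, and the main obstacle, is the combinatorial extraction of the $q^d$ coefficient. Summing the resulting geometric series over the $(r+1)$-th roots of unity gives the closed-form identity
\begin{equation*}
 q\,\partial_q G(q) = \tfrac{(-1)^{r+1}(r+1)}{24}\,\f(q),
\end{equation*}
where $\f(q) = q/(1-(-1)^{r+1}q)$ is the rational function from Section~\ref{s:3}. Term-by-term integration and extraction of the coefficient of $q^d$ then gives the claimed formula $(-1)^{d(r+1)}(r+1)/(24d)$, consistent with and generalizing the BCOV/Faber--Pandharipande answer $1/(12d)$ at $r = 1$. The combinatorial details, promised in the introduction to be handled via elementary combinatorics, are carried out in the appendix.
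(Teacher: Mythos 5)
Your strategy is essentially the paper's: work on the twisted theory over $\mathbb P^r$, exploit semisimplicity to write the genus-one free energy in terms of canonical coordinates, $\Delta_i$ and the $R$-matrix, pass to the non-equivariant limit, and integrate $q\,\partial_q G = (-1)^{r+1}\tfrac{r+1}{24}\f$ term by term to extract $\langle - \rangle_{1,d}$. Two caveats, though. The entry point is different: the paper explicitly notes that the $g=1$ case needs none of the quantization machinery and instead cites Givental's earlier elliptic GW formula (\cite{Gi}, Theorem~4.1), which gives $dG = \sum_i\bigl[d\log\Delta_i/48 - c^i_{-1}\,du_i/24 + R^1_{ii}\,du_i/2\bigr]$ directly from the equivariant genus-zero data; deriving the same formula by expanding Theorem~\ref{quantization} at $\hbar^0$ forces you to lean on Teleman's theorem and buys nothing here. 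More seriously, the crux --- that these ingredients collapse to $(-1)^{r+1}\tfrac{r+1}{24}\f$ in the non-equivariant limit --- is not a matter of ``summing a geometric series over the roots of unity.'' One must actually compute $\Delta_i$ and $\Psi\,d\Psi^{-1}$ from the roots $p_i$ of the characteristic polynomial, solve the flatness relation for $R^1_{ij}$ and then for $R^1_{ii}$, and, crucially, establish the nontrivial identity $\Xi_r := \sum_{k=1}^r g_k(\xi) = -\tfrac{(r+2)(r+1)^2 r}{24}$ (Lemma~\ref{key}). Your proposal neither produces this identity nor indicates how it falls out, and pointing to ``the appendix'' does not discharge it in a blind attempt; that computation is the substance of the proof, and it is missing.
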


In fact the $g = 1$ case can be achieved without using the
machinery of quantization. Givental has shown in \cite{Gi},
Theorem 4.1 that the total differential $dG$ can be expressed, up
to a constant, in terms of the canonical coordinates $u_i$'s as
\begin{equation} \label{dG}
dG = \sum_i \left[d \log \Delta_i/48 - c^i_{-1} du_i/24 + R^1_{ii}
du_i/2\right],
\end{equation}
where the RHS is determined by the {\em equivariant genus zero
theory}.

In the appendix we will determine all the terms $u_i$, $\Delta_i
:= 1/(\epsilon_i.\epsilon_i)$ and $R^1_{ii}$ step by step. The
final result (Theorem~\ref{t:g1}) is equivalent to
(\ref{g=1-general r}):
\begin{equation} \label{final}
\delta_h G = (-1)^{r + 1} \frac{r+1}{24} \f
\end{equation}
in the {\em non-equivariant limit}, where $\delta_h := q d/dq$.

We use it in the following setting: Let $f: X \dasharrow X'$ be a
simple $\mathbb{P}^r$ flop. Under the canonical correspondence $\T
= [\bar\Gamma_f]$ we have  $\T\ell = -\ell'$ and we identify $q' =
q^{-1}$. Then $\delta_{h'} = -\delta_h$ and by (\ref{f-eq}),
$\delta_h \f(q) = \delta_{h'} \f(q')$. Hence $\delta_h^2 G(q) =
\delta_{h'}^2 G'(q')$ and by the divisor axiom
$$
\langle h, \ldots, h \rangle_{1, n}^X = \delta_h^n G(q) = (-1)^{n
- 2} \delta_{h'}^n G'(q') = (-1)^n \langle h', \ldots, h'
\rangle_{1, n}^{X'}.
$$

Since $\T h^k = (-1)^{k} h'^k$, this implies the invariance of
$n$-point functions for all $n \ge 2$. (The invariance for $g \ge
2$, $n \ge 1$ is proved in the same way.)

For $n = 1$, to prove the invariance we may assume that $X$ and
$X'$ are local models. We compute via (\ref{g=1-defect}),
(\ref{final}) and (\ref{f-eq}) that
$$
\langle h \rangle_1^X - \langle \T h \rangle_1^{X'} =
-\frac{1}{24}\left((c_{2r}(X).h) - (c_{2r}(X').(\xi' - h'))
\right) - \frac{r + 1}{24}.
$$
Since $X \cong X'$, the invariance will follow from
\begin{proposition}
$$(c_{2r}(X).(2h - \xi)) = -(r + 1).$$
\end{proposition}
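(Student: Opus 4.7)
The approach is a direct Chern class calculation on $\pi : X = \mathbb{P}_Z(N \oplus \mathscr{O}) \to Z = \mathbb{P}^r$ with $N = \mathscr{O}_Z(-1)^{\oplus(r+1)}$, so $\dim X = 2r + 1$. The relative Euler sequence
\[
0 \to \mathscr{O}_X \to \pi^*(N \oplus \mathscr{O}) \otimes \mathscr{O}_X(1) \to T_{X/Z} \to 0
\]
together with $c(\pi^* T_Z) = (1+h)^{r+1}$ yields
\[
c(T_X) = (1+h)^{r+1}(1 + \xi - h)^{r+1}(1+\xi).
\]

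The plan is to introduce the variables $u = h$ and $v = \xi - h$, in which $c(T_X) = (1+u)^{r+1}(1+v)^{r+1}(1+u+v)$ and the class to integrate becomes $c(T_X)(2h - \xi) = c(T_X)(u - v)$. The cohomology ring of $X$ is $\mathbb{Z}[u, v]/(u^{r+1}, v^{r+1}(u+v))$, from which the reduction $v^{r+2} = -u v^{r+1}$ (and inductively $v^{r+1+k} = (-u)^k v^{r+1}$) follows; combined with the normalization $\int_X u^r v^{r+1} = 1$, this gives the integration rule
\[
\int_X u^a v^b = \begin{cases} (-1)^{b-r-1} & \text{if } a + b = 2r + 1 \text{ and } r + 1 \le b \le 2r + 1, \\ 0 & \text{otherwise}. \end{cases}
\]

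The key algebraic step is the telescoping identity $(u - v)(1+u)^{r+1}(1+v)^{r+1} = (1+u)^{r+2}(1+v)^{r+1} - (1+u)^{r+1}(1+v)^{r+2}$, which combined with $1 + u + v = (1+u) + v = (1+v) + u$ expresses $c(T_X)(u-v)$ as a sum of four terms, each a product of $(1+u)^m (1+v)^n$ with a factor of degree at most one. Applying the integration rule to each summand yields four alternating binomial sums; the range constraint restricts each to at most three nonzero terms. A short manipulation, pulling out a common factor of $(r+1)/6$, shows the remaining numerator telescopes to $-6$, giving the total $-(r+1)$.

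The main obstacle is bookkeeping: the reduction $v^{r+2} = -u v^{r+1}$ must be tracked carefully when terms of formal total degree exceeding $2r+1$ arise from the expansion. No deeper idea is needed. As a sanity check, at $r = 1$ the relations $h^2 = 0$ and $\xi^3 = 2h\xi^2$ give directly $c_2(T_X)(2h - \xi) = -2 h \xi^2$, integrating to $-2 = -(r+1)$.
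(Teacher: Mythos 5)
Your proposal is correct, and it reaches the same answer by a cleaner algebraic route than the paper. The paper works directly in the variables $h,\xi$ with the presentation $H^*(X)=\mathbb{Z}[h,\xi]/(h^{r+1},\,\xi(\xi-h)^{r+1})$: it expands $(1+\xi)(1+\xi-h)^{r+1}$ selectively using $\xi(\xi-h)^{r+1}=0$, observes that only the $h^r$ and $h^{r-1}$ terms of $(1+h)^{r+1}$ can contribute in total degree $2r+1$, and then evaluates four surviving products individually. Your substitution $u=h$, $v=\xi-h$ recasts the ring as $\mathbb{Z}[u,v]/(u^{r+1},\,v^{r+1}(u+v))$, from which the uniform integration rule $\int_X u^a v^b=(-1)^{b-r-1}$ for $a+b=2r+1$, $r+1\le b\le 2r+1$, follows immediately; combined with the telescoping identity $(u-v)(1+u)^{r+1}(1+v)^{r+1}=(1+u)^{r+2}(1+v)^{r+1}-(1+u)^{r+1}(1+v)^{r+2}$ and the split $1+u+v=(1+v)+u=(1+u)+v$, the class to integrate reduces to a few terms of the form $u^\epsilon(1+u)^m(1+v)^n$, and the binomial sums collapse by Pascal's rule to $-(r+1)$. (I checked the arithmetic directly: with the two-term split one gets $\binom{r+2}{3}-(r+1)\binom{r+2}{2}+(r+2)\binom{r+1}{2}-\binom{r+1}{3}=\tfrac{r+1}{2}\bigl[(r+3)r-(r+1)(r+2)\bigr]=-(r+1)$, confirming the claimed telescoping.) Both arguments are elementary and of comparable length, but your change of variables makes the ring relations and the integration normalization $\int_X u^r v^{r+1}=\int_X h^r\xi^{r+1}=1$ transparent, and the telescoping identity replaces the paper's term-by-term bookkeeping with a structural simplification; it would also generalize more readily to other twists of the same fibration. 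The $r=1$ sanity check is correct.
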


\begin{proof}
Since $X = \mathbb{P}_Z (N_{Z/X} \oplus \scr O)
\mathop{\to}\limits^p Z$, by the Leray-Hirsh theorem,
$$
H^*(X) = \mathbb{Z}[h, \xi]/(h^{r + 1}, \xi(\xi - h)^{r + 1}).
$$

From $0 \to \scr O \to \scr O(1)\otimes p^*(N_{Z/X} \oplus \scr O)
\to T_{X/Z} \to 0$ we get $c(X) = p^*c(Z).c(T_{X/Z}) = (1 + h)^{r
+ 1}(1 + \xi)(1 + \xi - h)^{r + 1}$. So we need to calculate the
degree $2r + 1$ terms in
$$
c(X).(2h - \xi) = (1 + h)^{r + 1}(1 + \xi)(1 + \xi - h)^{r + 1}(2h
- \xi)
$$
under the additional relation $h^r \xi^{r + 1} = 1$.

Since $\xi(\xi - h)^{r + 1} = 0$, $(1 + \xi)(1 + (\xi - h))^{r +
1}$ start with
$$
(\xi - h)^{r + 1} + C^{r + 1}_1 \xi(\xi - h)^r + C^{r + 1}(\xi -
h)^r + C^{r + 1}_2 \xi(\xi - h)^{r - 1} + \mbox{lower terms}.
$$
Thus in $(1 + h)^{r + 1} = h^r + C^{r + 1}_1 h^{r - 1} + \cdots$
only $h^r$ and $h^{r - 1}$ contribute to terms with total degree
$2r + 1$. There are four such terms:
$$
C^{r + 1}_1 C^{r + 1}_1 h^r(\xi - h)^r (2h - \xi) = -(C^{r +
1}_1)^2,
$$
$$
C^{r + 1}_1 C^{r + 1}_2 h^r \xi(\xi - h)^{r - 1}(2h - \xi) = -C^{r
+ 1}_1 C^{r + 1}_2,
$$
$$
C^{r + 1}_2 h^{r - 1}(\xi - h)^{r + 1}(2h - \xi) = 2C^{r + 1}_2
$$
and (using the Chern relation $0 = \xi(\xi - h)^{r + 1} = \xi^{r +
1} -C^{r + 1}_1 \xi^r h + \cdots$)
\begin{align*}
&C^{r + 1}_2 C^{r + 1}_1 h^{r - 1}\xi(\xi - h)^r (2h - \xi)= C^{r
+ 1}_2 C^{r + 1}_1 (2h^r \xi(\xi - h)^r -h^{r -
1}\xi^2(\xi - h)^r)\\
&= C^{r + 1}_2 C^{r + 1}_1 (2 - C^{r + 1}_1 + C^r_1) = C^{r + 1}_2
C^{r + 1}_1.
\end{align*}
The sum is $-(r + 1)^2 + (r + 1)r = -(r + 1)$ as expected.
\end{proof}

\section{Appendix: The calculations for $g = 1$}

We calculate the genus one twisted Gromov--Witten invariants on
the bundle $\scr{O}_{\bb P^r}(-1)^{r+1}$ by using Givental's work
\cite{Gi}.

\subsection{The Frobenius structure and the canonical coordinates}

This calculation follows the general scheme outlined in \cite{Gi}.

Let $E$ be the total space of $\scr{O}_{\bb P^r}(-1)^{r+1}$ over
$\mathbb{P}^r$ and consider the torus action of $\mathbb{C}^*
\times \mathbb{C}^* $ on $E$ such that the first $\mathbb{C}^*$
acts trivially on $\mathbb{P}^r$ and by scalar multiplication on
the fiber of $E$ and the second one acts by
$$
\alpha \cdot [x_0:x_1:\cdots :x_r] = [\alpha^{l_0}
x_0:\alpha^{l_1} x_1:\cdots :\alpha^{l_r} x_r].
$$
Let $\lambda$ and $\lambda'$ denote the characters of these two
actions respectively.  Let $i: E_{fixed} \hookrightarrow E$ be the
injection, where $E_{fixed}$ denotes the fixed loci. Denote by $p$
the equivariant hyperplane class of $\bb P^r$ for the first
action.

\begin{proposition} \label{char}
The characteristic polynomial of $p*$ in equivariant small quantum
cohomology is given by
\begin{equation} \label{char-poly}
q(\lambda -p)^{r + 1} = p^{r + 1}.
\end{equation}
\end{proposition}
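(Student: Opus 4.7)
The plan is to derive the relation by identifying the equivariant small quantum cohomology of $E$ with a twisted Gromov--Witten theory on $\bb P^r$. Since $E$ is the total space of the concave bundle $V = \scr O_{\bb P^r}(-1)^{r+1}$ and the first $\mathbb{C}^*$ scales the fibers of $V$ with character $\lambda$ (trivial on $\bb P^r$), the equivariant GW invariants of $E$ localize to the zero section, and the reduced theory on $\bb P^r$ is obtained by twisting with the equivariant Euler class of $R^1\pi_*f^*V$. I would invoke Givental's mirror theorem in its form for concave bundles; the key structural input is the Calabi--Yau condition $c_1(T\bb P^r) + c_1(V) = (r+1)h - (r+1)h = 0$, which ensures that the twisted $I$-function equals the $J$-function with no mirror-map correction, so that the small equivariant quantum $D$-module is read off directly from $I$.

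The next step is to write the twisted $I$-function explicitly. For $V$ concave with equivariant Chern roots $\lambda - p$ (each repeated $r+1$ times), the usual recipe yields
\begin{equation*}
I(q,z) \;=\; \sum_{d \ge 0} q^{d}\,
\frac{\prod_{k=0}^{d-1}(\lambda - p - kz)^{r+1}}
     {\prod_{k=1}^{d}(p+kz)^{r+1}}.
\end{equation*}
Let $T_d$ denote the coefficient of $q^d$. Then $T_d/T_{d-1} = (\lambda - p - (d-1)z)^{r+1}/(p+dz)^{r+1}$, and one verifies by bookkeeping that the operator $\mathcal{P}_1 := (zq\partial_q + p)^{r+1}$ multiplies $q^d T_d$ by $(p+dz)^{r+1}$, while $\mathcal{P}_2 := q(\lambda - p - zq\partial_q)^{r+1}$ shifts the index $d \mapsto d+1$ precisely so as to cancel the $(p+(d+1)z)^{r+1}$ in the denominator of $T_{d+1}$. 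Therefore the telescoping $(\mathcal{P}_1 - \mathcal{P}_2)I$ collapses to the $d=0$ term, which is $p^{r+1}$.

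Passing to the semiclassical (small quantum cohomology) limit $z \to 0$ turns $\mathcal{P}_1 - \mathcal{P}_2$ into multiplication by $p^{r+1} - q(\lambda - p)^{r+1}$, so this element annihilates the $J$-function at $z = 0$ and hence vanishes in $QH^*_{\mathrm{small},\,\mathrm{eq}}(E)$. Equivalently, $p*$ satisfies the characteristic polynomial $q(\lambda - p)^{r+1} = p^{r+1}$, as claimed.

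The main obstacle I anticipate is fixing the precise sign and range conventions in the concave $I$-function (the placement of $\lambda$ versus $-\lambda$, and whether the numerator product runs $k = 0, \dots, d-1$ or $k = 1, \dots, d$); once these are pinned down by cross-checking against the lowest nontrivial degree (say $d = 1$, three-point invariant) via direct virtual localization on $\Mbar_{0,3}(\bb P^r,\ell)$, the rest is a formal consequence of the quantum $D$-module structure. An alternative route, which avoids any appeal to the mirror theorem, would be to specialize the presentation of $QH^*_{\mathrm{small}}(\bb P(V \oplus \scr O))$ obtained in the proof of Lemma~\ref{s-s} to its equivariant version (with $\scr O$ of weight $0$ and each summand of $V$ of weight $\lambda$) and then restrict to the open piece $E \subset \bb P(V \oplus \scr O)$ by deleting the infinity divisor; this should reproduce the same relation and serves as a useful consistency check.
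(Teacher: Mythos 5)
Your proposal is correct in substance but takes a genuinely different route from the paper. The paper's proof is a two-line affair: it either reads off the relation formally from the presentation of $QH^*_{\mathrm{small}}(\tilde E)$ in Lemma~\ref{s-s} (substituting $h,\xi \leadsto p,\lambda$), or invokes a ready-made formula from \cite{Gi}, Corollary 4.4, namely $\mathrm{Quantum}\,(\lambda-p)^{r+1} = \mathrm{Classical}\,\tfrac{1}{1+(-1)^r q}(\lambda-p)^{r+1}$, and then unwinds it using $p^{r+1}=0$ classically. Your argument re-derives what that citation encodes: you write down the concave twisted $I$-function, build the operators $\mathcal{P}_1 = (zq\partial_q+p)^{r+1}$ and $\mathcal{P}_2 = q(\lambda-p-zq\partial_q)^{r+1}$, and verify the telescoping identity $\mathcal{P}_1 I_{d+1} = \mathcal{P}_2 I_d$ so that $(\mathcal{P}_1-\mathcal{P}_2)I = p^{r+1} = 0$. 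This is a valid $D$-module derivation, more self-contained than the paper's but longer. Your closing ``alternative route'' is essentially the paper's own first proof.

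One point worth tightening: your semiclassical step ``$z\to 0$ turns $\mathcal{P}_1-\mathcal{P}_2$ into multiplication by $p^{r+1} - q(\lambda-p)^{r+1}$'' glosses over the fact that $\mathcal{P}_1,\mathcal{P}_2$ mix the cohomology-class multiplication operator $p$ with the derivation $zq\partial_q$, because you have stripped off the $e^{p\log q/z}$ prefactor from the $I$-function. The combination $zq\partial_q + p$ acting on the reduced $I$-function is the conjugate of $zq\partial_q$ acting on the full $I$-function and degenerates as a whole to the quantum multiplication operator $p\ast$; taking $z\to 0$ naively term-by-term would instead give the classical class $p^{r+1}$ for $\mathcal{P}_1$, which is not what you want. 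Once you state this as ``the quantum connection $z\nabla_{q\partial_q}$ degenerates to $p\ast$'' the step is standard. The use of the Calabi--Yau condition to trivialize the mirror map is correctly invoked but is in fact only needed to identify the $q$-variable with the Novikov variable; the relation itself can be extracted from the $I$-function regardless.
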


\begin{proof}
Formally this follows from the formula for small quantum ring of
local models in Lemma \ref{s-s}, with $h$, $\xi$ being replaced by
$p$, $\lambda$.

Alternatively, by \cite{Gi} Corollary 4.4 (in the limit $\lambda'
\rightarrow 0$ which exists),
$$
\mbox{\small Quantum}\,(\lambda -p)^{r+1} = \mbox{\small
Classical}\,\frac{1}{1+(-1)^rq}(\lambda -p)^{r+1}.
$$

Using the fact that $p^{r+1}=0$ in the classical product and the
quantum $p^k$ coincides with the classical one for $k \le r$, we
get
$$
(1 + (-1)^r q)(\lambda - p)^{r + 1} = (\lambda - p)^{r + 1} -
(-1)^{r + 1}p^{r + 1}
$$
in small quantum product. The proposition follows.
\end{proof}

Solving this formally in $q$ or locally analytically in the
K\"ahler moduli coordinate $t$ with $q = e^t$, we get (with $\xi
:= e^{2\pi i/(r + 1)}$):
\begin{equation} \label{roots}
p_i = \frac{\lambda}{1+(-1)^r\xi^i q^{-\frac{1}{r+1}}},\qquad i =
0, 1, \ldots, r.
\end{equation}

Using the {\em standard basis} $1, p, \ldots, p^r$ of $H^*_{{\bb
C}^\times}({\bb P}^r)$, it is easy to see that $p_i$'s are the
eigenvalues of the quantum multiplication operator $p*$. It
follows that for $k \le r$, $p_i^k$'s are the eigenvalues of $p^k*
= (p*)^{\circ k}$. The common eigenvectors $\epsilon_i$'s which
simultaneously diagonalize the quantum product $p^k * \epsilon_i =
p^k_i \epsilon_i$ are known as the {\em canonical basis}. Let
$t_i$'s be the {\em standard (flat) coordinates} dual to $\{1, p,
\ldots, p^r\}$ and $u_i$'s be the {\em canonical coordinates} dual
to $\{\epsilon_0, \ldots, \epsilon_r\}$. The canonical basis
$\{\epsilon_i\}$ is {\em orthogonal} with respect to the
Poincar\'e pairing since quantum multiplications are self adjoint.

In practice we may simply define $u_i$ and then $\epsilon_i$ by
the relation
\begin{equation} \label{canonical}
du_i = \sum_{k = 0}^r p_i^k \, dt_k.
\end{equation}
Calculations in canonical coordinates are thus essentially {\em
formal linear algebra} which in our case are reduced to the {\em
roots-coefficients} relation
\begin{equation} \label{char-poly2}
(-1)^r p^{r + 1} - \f C^{r + 1}_1 \lambda p^r + \f C^{r + 1}_2
\lambda^2 p^{r - 1} - \cdots + (-1)^{r + 1} \f \lambda^{r + 1} =
0.
\end{equation}

\begin{remark}
It is important to point out that all the coefficients involve
$\f$ only! Thus {\em in principle} everything {\em canonically
determined} by the Frobenius structure should be invariant under
flops after one more differentiation by $\delta_h = qd/dq \equiv
d/dt$. This note provides a demonstration in this direction.
\end{remark}

We start with determining $\sum_{i}c^i_{-1}du_i/24$. Recall that
$c^i_{-1}$ is the localization of $c_{\dim - 1}/c_{\dim}$ at the
$i$-th fixed point $[0:\cdots:1:\cdots:0]$. Since the Chern roots
at there are given by
\begin{align*}
\underbrace{l_i\lambda'+\lambda,\cdots,
l_i\lambda'+\lambda}_{r+1}, \quad
\underbrace{(l_j-l_i)\lambda'}_{r} \quad(j\neq i),
\end{align*}
we see that
$$
c^i_{-1} = \frac{r+1}{l_i\lambda'+\lambda}+\sum_{j\neq
i}\frac{1}{(l_j-l_i)\lambda'}.
$$
In the limit $\lambda' \rightarrow 0$, the trouble terms with
$1/\lambda'$ must be canceled out with the corresponding terms in
$R^1_{ii}$'s via (\ref{dG}). So $c^i_{-1} = (r+1)/\lambda$ and
\begin{equation}
\sum_{i}c^i_{-1} du_i/24 = \frac{r + 1}{24\lambda} \sum_{i} du_i =
\frac{r + 1}{24\lambda} \sum_{k = 0}^r \Big(\sum_{i} p_i^k\Big)
dt_k.
\end{equation}

Since we are only interested in the non-equivariant limit $\lambda
\to 0$, by (\ref{char-poly2}) the only terms remaining are with $k
= 1$, hence we obtain
\begin{proposition} \label{2nd_term}
$$
\sum_{i}c^i_{-1} du_i/24 = (-1)^r \frac{(r + 1)^2}{24} \f\, dt_1.
$$
\end{proposition}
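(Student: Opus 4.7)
The plan is to evaluate the expression
\[
\sum_i c^i_{-1}\,du_i/24 \;=\; \frac{r+1}{24\lambda}\sum_{k=0}^{r}\Big(\sum_i p_i^k\Big)\,dt_k
\]
derived just above, by computing each power sum $s_k:=\sum_i p_i^k$ in the non-equivariant limit $\lambda\to 0$ and identifying the unique value of $k$ that produces a finite, nonzero contribution after division by $\lambda$.

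The first step is to observe, either directly from (\ref{roots}) which shows $p_i$ is divisible by $\lambda$, or equivalently from (\ref{char-poly2}) rewritten as a monic polynomial whose coefficient of $p^{r+1-k}$ is $(-1)^{k+r}\f\binom{r+1}{k}\lambda^k$, that the elementary symmetric functions $e_k$ of the $p_i$ lie in $\lambda^k\cdot \mathbb{C}[\f]$. Newton's identities then propagate this divisibility to the power sums: $s_k\in\lambda^k\cdot \mathbb{C}[\f]$ for every $k\ge 1$. Hence $s_k/\lambda$ is of order $\lambda^{k-1}$, which vanishes as $\lambda\to 0$ whenever $k\ge 2$ and leaves only the $k=1$ slot as a possible source of a finite nonzero term.

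For that slot, Vieta applied to (\ref{char-poly2}) reads off $\sum_i p_i = -(-1)^{r+1}(r+1)\lambda\f = (-1)^{r}(r+1)\lambda\f$; alternatively one can sum the explicit geometric series in (\ref{roots}) using $\prod_{i=0}^{r}(1-z\xi^i) = 1-z^{r+1}$ with $z=(-1)^{r+1}q^{-1/(r+1)}$ and recover the same value. Substituting this into the $k=1$ term yields
\[
\frac{r+1}{24\lambda}\cdot (-1)^r(r+1)\lambda\f\cdot dt_1 \;=\; (-1)^r\,\frac{(r+1)^2}{24}\,\f\,dt_1,
\]
which is the asserted identity.

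The one genuine subtlety concerns the $k=0$ slot, which naively contributes a formally divergent $(r+1)^2/(24\lambda)\,dt_0$. This divergence is of exactly the same nature as the $1/\lambda'$ divergences flagged earlier in the calculation of $c^i_{-1}$: it is cancelled by matching terms in the $R^1_{ii}\,du_i/2$ summand of (\ref{dG}), and equivalently the string equation forces the $dt_0$ component of $dG$ itself to be trivial. Dropping the $dt_0$ piece from this partial sum is therefore consistent with reading only the surviving finite contribution to $dG$, and this is the one step that requires care beyond routine Vieta/Newton bookkeeping.
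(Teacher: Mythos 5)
Your proof is correct and follows essentially the same route as the paper: expand $du_i=\sum_k p_i^k\,dt_k$, note that $p_i=O(\lambda)$ (so all power sums $s_k$ with $k\ge 2$ vanish after dividing by $\lambda$ and letting $\lambda\to0$), and read off $\sum_i p_i=(-1)^r(r+1)\lambda\f$ from Vieta applied to (\ref{char-poly2}). The Newton's-identities detour is harmless but unnecessary since (\ref{roots}) already exhibits each $p_i$ as $\lambda$ times a $\lambda$-independent function.

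You are right that the $k=0$ slot is a genuine subtlety which the paper leaves implicit, but your first explanation of it does not survive scrutiny. From the formula the paper derives later,
\[
R^1_{ii}=\frac{(-1)^r\Xi_r}{(r+1)^3\lambda}\bigl(\xi^{-i}q^{1/(r+1)}+\xi^{i}q^{-1/(r+1)}\bigr),
\]
one gets $\sum_i R^1_{ii}=0$ because $\sum_{i=0}^{r}\xi^{\pm i}=0$, so the $R^1_{ii}\,du_i/2$ summand has \emph{no} $dt_0$ component in the non-equivariant limit and cannot absorb the $(r+1)^2/(24\lambda)\,dt_0$ divergence. (It is the $1/\lambda'$ pieces of $c^i_{-1}$, which vanish pairwise by antisymmetry in $i,j$ anyway, that are matched against equivariant pieces of $R^1_{ii}$, not the $1/\lambda$ piece.) Your second explanation is the correct one and should stand alone: the genus-one fundamental class axiom gives $\langle 1\rangle_{1,1,\beta}=0$, so $\partial_{t_0}G\equiv 0$, and (\ref{dG}) is being read only in the $dt_1$-direction — precisely the identification $t=t_1$ the paper makes immediately after this proposition. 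Dropping the claimed $R^1_{ii}$ cancellation and keeping the string-axiom argument makes the treatment airtight.
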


In Givental's formulation, we will need to identify $t = t_1$ in
the sequel. But we will keep $q = e^t$ independent of $t_1$
whenever possible.

\subsection{The Poincar\'e pairing and the formula for
$\Delta_i := \langle \epsilon_i.\epsilon_i \rangle^{-1}$} {\ }

\begin{lemma} \label{poincare}
The equivariant Poincar\'e pairing is given by
$$
\langle p^k.p^l\rangle = C^{2r - d}_{r - d} \frac{1}{\lambda^{2r +
1 - d}}
$$
where $d = k + l$. It vanishes if $d > r$.
\end{lemma}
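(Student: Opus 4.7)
\smallskip

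The plan is to compute the equivariant pairing $\langle p^k . p^l \rangle$ on the non-compact total space $E$ by applying Atiyah--Bott localization to the fiber-scaling $\mathbb{C}^*$-action with character $\lambda$. The fixed locus of this action is precisely the zero section $\mathbb{P}^r \subset E$, and its equivariant normal bundle is $\mathcal{O}_{\mathbb{P}^r}(-1)^{\oplus(r+1)}$ with each summand twisted by the character $\lambda$. Hence the equivariant Euler class of the normal bundle is $(\lambda - p)^{r+1}$, where $p$ denotes the (equivariant) hyperplane class of $\mathbb{P}^r$.

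By the localization formula, for any equivariant class $\alpha$ on $E$,
\[
\int_E \alpha \;=\; \int_{\mathbb{P}^r} \frac{i^*\alpha}{(\lambda - p)^{r+1}}.
\]
Applied with $\alpha = p^k \cup p^l = p^d$ where $d = k+l$, this gives
\[
\langle p^k . p^l \rangle \;=\; \int_{\mathbb{P}^r} \frac{p^d}{(\lambda - p)^{r+1}}.
\]
Next I would expand the denominator geometrically,
\[
\frac{1}{(\lambda - p)^{r+1}} \;=\; \frac{1}{\lambda^{r+1}} \sum_{j \ge 0} \binom{r + j}{j} \frac{p^j}{\lambda^j},
\]
and extract the coefficient of $p^r$ in $p^d/(\lambda-p)^{r+1}$, which forces $j = r - d$. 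Since $\int_{\mathbb{P}^r} p^r = 1$, this yields
\[
\langle p^k . p^l \rangle \;=\; \binom{2r-d}{r-d} \frac{1}{\lambda^{2r+1-d}} \;=\; C^{2r-d}_{r-d}\,\frac{1}{\lambda^{2r+1-d}},
\]
as desired. For $d > r$, we have $p^d = 0$ in $H^*_{\mathbb{C}^*}(\mathbb{P}^r)$ (the first $\mathbb{C}^*$ acts trivially on $\mathbb{P}^r$, so the relation $p^{r+1}=0$ still holds), hence the pairing vanishes.

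The argument is essentially a bookkeeping exercise, so there is no real obstacle; the only point requiring care is ensuring the sign/weight convention for the equivariant Chern class of $\mathcal{O}(-1)$ (with the prescribed fiber scaling by $\lambda$) yields $\lambda - p$ rather than $\lambda + p$, which is dictated by the normalization that $E$ is the total space of $\mathcal{O}(-1)^{\oplus(r+1)}$ with fibers scaled positively by the first $\mathbb{C}^*$. Once this convention is fixed, the localization and the elementary series expansion close the proof.
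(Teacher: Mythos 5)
Your proof is correct and follows essentially the same route as the paper: apply Atiyah--Bott localization for the fiber-scaling $\mathbb{C}^*$-action to reduce the integral over $E$ to one over $\mathbb{P}^r$ with the Euler class $(\lambda - p)^{r+1}$ in the denominator, then expand geometrically and extract the coefficient of $p^r$. The paper's $C^{r+j}_j$ coefficients match your $\binom{r+j}{j}$, and both close with the same bookkeeping.
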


\begin{proof}
The localization formula for the first $\mathbb{C}^\times$ action
reads as:
$$
\int_{E} \omega=\int_{\mathbb{P}^r}
\frac{i^*\omega}{(\lambda-p)^{r+1}}=\int_{\mathbb{P}^r}
\frac{i^*\omega}{\lambda^{r+1}}\Big(1 - \frac{p}{\lambda}
\Big)^{-(r+1)}.
$$
Since $p^k$ vanishes if $k > r$, we get by Taylor expansion that
$$
\Big(1 - \frac{p}{\lambda}\Big)^{-(r + 1)} = 1 + C^{r + 1}_{1}
\frac{p}{\lambda} + C^{r + 2}_{2} \Big(\frac{p}{\lambda}\Big)^2 +
\cdots + C^{r + r}_{r} \Big(\frac{p}{\lambda}\Big)^r.
$$
So the Poincar\'e pairing is given by
$$
\langle p^k.p^l\rangle = \int_E p^d = C^{r + r - d}_{r - d}
\frac{1}{\lambda^{r + 1 + (r - d)}} = C^{2r - d}_{r - d}
\frac{1}{\lambda^{2r + 1 - d}}.
$$
\end{proof}

Define $a_i := 1 + (-1)^r \xi^i q^{-\frac{1}{r+1}}$ so $p_i =
\lambda/a_i$. For convenient we also denote by $c_i = (-1)^r \xi^i
q^{-\frac{1}{r+1}}$, so $a_i = 1 + c_i$ and $c_j = c_0 \xi^j = c_i
\xi^{j - i}$.

\begin{proposition} \label{epsilon}
The canonical basis $\epsilon_i$'s are given by
$$
\epsilon_i = \frac{q  c_i}{r + 1} a_i^{r} \prod_{l\neq i} \Big(1 -
a_l \frac{p}{\lambda}\Big).
$$
They are vector fields along the K\"ahler moduli variable $q$.
\end{proposition}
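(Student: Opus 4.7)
The plan is to recognize that the canonical basis $\{\epsilon_i\}$ consists of the primitive idempotents of the equivariant small quantum ring and to compute them explicitly via Lagrange interpolation. By (\ref{canonical}) the matrix $(\partial u_i/\partial t_k) = (p_i^k)_{i,k}$ is Vandermonde, hence invertible because the eigenvalues $p_i = \lambda/a_i$ in (\ref{roots}) are pairwise distinct. A standard inverse-Vandermonde calculation identifies $\epsilon_i = \partial/\partial u_i$, expressed in the basis $\{p^k\}_{k=0}^{r}$, with the coefficients of the Lagrange polynomial
\begin{equation*}
L_i(z) = \prod_{l \neq i} \frac{z - p_l}{p_i - p_l}.
\end{equation*}
Since $\deg L_i = r$, the substitution $z \mapsto p$ may be carried out classically in $\mathbb{Q}[\lambda][p]$ without invoking the quantum relation (\ref{char-poly}), so $\epsilon_i = L_i(p)$ in the quantum ring.

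For the explicit simplification, I would substitute $p_l = \lambda/a_l$ and rewrite each factor as $p - p_l = -(\lambda/a_l)(1 - a_l p/\lambda)$ and $p_i - p_l = \lambda(a_l - a_i)/(a_i a_l)$. Taking the product over $l \neq i$, the $\lambda$ factors cancel and the $a_l$ factors in the numerator and denominator combine to yield
\begin{equation*}
\epsilon_i = \frac{a_i^{r}}{\prod_{l \neq i}(a_i - a_l)} \prod_{l \neq i}\left(1 - a_l \frac{p}{\lambda}\right).
\end{equation*}

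The remaining task is to evaluate the scalar $\prod_{l \neq i}(a_i - a_l)$. Using $a_j - a_i = c_0(\xi^j - \xi^i)$ and reindexing $k = l - i$ modulo $r+1$, this product equals $c_0^r \xi^{i r} \prod_{k=1}^{r}(1 - \xi^k)$. The cyclotomic identity $\prod_{k=1}^{r}(1 - \xi^k) = r+1$, obtained by evaluating $(z^{r+1} - 1)/(z - 1) = 1 + z + \cdots + z^r$ at $z = 1$, together with $c_0^r \xi^{i r} = c_i^r$, gives $\prod_{l \neq i}(a_i - a_l) = (r+1)\,c_i^r$. The identity $c_i^{r+1} = (-1)^{r(r+1)} q^{-1} = 1/q$ (since $r(r+1)$ is always even) then converts $1/c_i^r$ into $q c_i$, which delivers the stated formula. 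The only subtle point is the bookkeeping of the signs and roots of unity in the identity $c_i^{r+1} = 1/q$; once this is handled the remaining manipulations are routine, and $\epsilon_i$ is manifestly a vector field along $q$ since both $a_i$ and $c_i$ depend on $q$.
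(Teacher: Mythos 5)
Your proof is correct, and it takes a genuinely complementary route to the paper's. The paper simply \emph{verifies} the stated formula: calling the right-hand side $\varepsilon_i$, it checks $du_j(\varepsilon_i)=\delta_{ji}$ by substituting $p\mapsto p_j$ (which is what applying $du_j$ amounts to under \eqref{canonical}) and then simplifying the resulting product to $\delta_{ij}$ via the root-of-unity identity $\prod_{l\neq i}(1-\xi^{l-i})=r+1$. You instead \emph{derive} the formula: you observe that $(\partial u_i/\partial t_k)=(p_i^k)$ is Vandermonde with distinct nodes, so $\epsilon_i=\partial/\partial u_i$ is read off the inverse Vandermonde as the Lagrange basis polynomial $L_i(p)$; you then reduce $L_i(p)$ to the stated closed form. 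Both approaches ultimately hinge on the same two facts --- that quantum powers $p^{*k}$ agree with classical $p^k$ for $k\le r$ (implicit in your ``evaluate classically'' step and in the paper's use of $du_j(p^k)=p_j^k$), and the cyclotomic product $\prod_{k=1}^{r}(1-\xi^k)=r+1$ --- so the underlying content is the same, but your presentation explains \emph{why} the formula has the shape it has (idempotents of a semisimple algebra generated by a single element are Lagrange interpolants at the spectrum), whereas the paper's is a terser consistency check. Your bookkeeping of $c_i^{r+1}=q^{-1}$, $\prod_{l\neq i}(a_i-a_l)=(r+1)c_i^r$, and the factor rewriting $p-p_l=-(\lambda/a_l)(1-a_l p/\lambda)$ all check out. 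One small caveat worth stating explicitly, which you only gesture at: the Lagrange polynomial $L_i$ is a polynomial of degree exactly $r$, and it is precisely the coincidence of classical and quantum $p^k$ for $k\le r$ (used in the proof of Proposition~\ref{char}) that licenses your ``may be carried out classically'' claim; a reader would appreciate seeing that cross-reference.
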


\begin{proof}
Call the RHS $\varepsilon_i$ and it suffices to show that
$du_j(\varepsilon_i) =\delta_{ji}$. By (\ref{canonical}), the
effect of $du_j$ is simply the replacement of $p$ by $p_j =
\lambda/a_j$. Hence
\begin{align*}
du_j(\varepsilon_i) &= \frac{q c_i}{r+1}
\Big(\frac{a_i}{a_j}\Big)^r \prod_{l\neq i} (a_j - a_l)\\
&= \frac{q c_i}{r+1}  \Big(\frac{a_i}{a_j}\Big)^r
\prod_{l\neq i} (-1)^r (\xi^j - \xi^l) q^{-\frac{1}{r+1}} \\
&= \frac{\xi^{jr+i}}{r+1}  \Big(\frac{a_i}{a_j}\Big)^r
\prod_{l\neq i} (1-\xi^{l-j}) = \left \{ \begin{array}{ll}
1 & \textrm{if $j=i$}\\
0 & \textrm{if $j \neq i$}
\end{array} \right..
\end{align*}
\end{proof}

Denote by $S^l_k(x)$ be the $k$-th elementary symmetric polynomial
in $x_j$'s with $0\le j \le r$ and $l \ne l$. We often need some
basic formulae on roots of unity whose verifications are
elementary and omitted.

\begin{lemma} \label{trivial}
The following formulae for $\xi$ hold:
\begin{itemize}
\item[(1)] $S^i_k(\xi) := S^i_k(\xi^0, \ldots, \xi^r) = (-1)^k
\xi^{ki}$ for $k = 0,1,\ldots, r$.

\item[(2)]
$\sum_{i=0}^{r}\xi^{ki}=0$ for $k = 1, 2, \ldots, r$.
\end{itemize}
\end{lemma}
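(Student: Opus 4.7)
Both identities are purely formal consequences of the factorization
\[
 x^{r+1} - 1 = \prod_{j=0}^{r}(x - \xi^j),
\]
which holds because $\xi = e^{2\pi i/(r+1)}$ is a primitive $(r+1)$-th root of unity.

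For part (1), the plan is to divide the factorization by the single factor $(x - \xi^i)$. Using the identity $1 = \xi^{(r+1)i}$, one recognizes
\[
 \prod_{j \ne i}(x - \xi^j) = \frac{x^{r+1} - 1}{x - \xi^i}
 = \frac{x^{r+1} - (\xi^i)^{r+1}}{x - \xi^i}
 = \sum_{k=0}^{r} \xi^{ki}\, x^{r-k},
\]
by the standard geometric-series expansion $(a^{r+1} - b^{r+1})/(a - b) = \sum_{k=0}^r a^{r-k} b^k$. On the other hand, by definition of the elementary symmetric polynomials,
\[
 \prod_{j \ne i}(x - \xi^j) = \sum_{k=0}^{r} (-1)^k S^i_k(\xi)\, x^{r-k}.
\]
Matching coefficients of $x^{r-k}$ gives $(-1)^k S^i_k(\xi) = \xi^{ki}$, which is equivalent to the claimed formula $S^i_k(\xi) = (-1)^k \xi^{ki}$.

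For part (2), I would use the finite geometric series: for $k = 1, \ldots, r$ we have $\xi^k \ne 1$ (since $k$ is not a multiple of $r+1$), so
\[
 \sum_{i=0}^{r} \xi^{ki} = \frac{\xi^{k(r+1)} - 1}{\xi^k - 1} = \frac{1 - 1}{\xi^k - 1} = 0,
\]
using $\xi^{r+1} = 1$. There is no real obstacle here; both parts are elementary and the only point requiring care is keeping the sign and index conventions straight when matching coefficients in (1).
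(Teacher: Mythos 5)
Your proof is correct and complete. The paper does not actually supply an argument for this lemma --- it states that the verifications ``are elementary and omitted'' --- so there is no author proof to compare against. Your approach, factoring $x^{r+1}-1=\prod_{j=0}^{r}(x-\xi^j)$, dividing out the single factor $(x-\xi^i)$, expanding via the geometric-series identity, and matching coefficients against $\sum_{k=0}^r(-1)^k S^i_k(\xi)\,x^{r-k}$, is exactly the standard argument, and the sign and index bookkeeping is handled correctly; part (2) is likewise the usual finite geometric sum using $\xi^k\neq 1$ for $1\le k\le r$.
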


Clearly the lemma applies to $c_j$'s as well. Then
\begin{equation} \label{epsilon2}
\begin{split}
\epsilon_i &= \frac{q c_i a_i^{r}}{r + 1} \prod_{j\neq i}
\left[\Big(1 - \frac{p}{\lambda}\Big) - c_j
\frac{p}{\lambda}\right]  \\
&= \frac{q c_i a_i^{r}}{r + 1} \sum_{k = 0}^r (-1)^k S^i_k(c)
\Big(\frac{p}{\lambda}\Big)^k \Big(1 - \frac{p}{\lambda}\Big)^{r -
k} \\
&= \frac{q c_i a_i^{r}}{r + 1} \sum_{k = 0}^r c_i^k
\Big(\frac{p}{\lambda}\Big)^k \Big(1 - \frac{p}{\lambda}\Big)^{r -
k}.
\end{split}
\end{equation}

It is convenient to denote $\langle p \rangle^{k + l} = \langle
p^k.p^l \rangle = \langle p^{k + l} \rangle$.

\begin{lemma} \label{zero}
For $k = 0, 1, \ldots, r - 1$,
$$
\Big< \Big(\frac{p}{\lambda}\Big)^k \Big(1 -
\frac{p}{\lambda}\Big)^{2r - k} \Big> = 0.
$$
\end{lemma}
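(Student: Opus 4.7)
\textbf{Proof Plan for Lemma \ref{zero}.}

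The plan is to expand $(1 - p/\lambda)^{2r-k}$ by the binomial theorem, apply Lemma \ref{poincare} term by term, and then recognize that the truncated binomial sum one is left with collapses to $(1-1)^m$ for a positive integer $m$.

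First I would write
\[
 \Big(\frac{p}{\lambda}\Big)^k \Big(1-\frac{p}{\lambda}\Big)^{2r-k}
 = \sum_{j=0}^{2r-k} (-1)^j \binom{2r-k}{j} \Big(\frac{p}{\lambda}\Big)^{k+j}.
\]
By Lemma \ref{poincare}, $\langle p^a\rangle = 0$ whenever $a>r$, so only the terms with $k+j\le r$ survive; that is, $j$ ranges from $0$ to $r-k$. For the surviving terms Lemma~\ref{poincare} gives
\[
 \Big\langle \Big(\frac{p}{\lambda}\Big)^{k+j} \Big\rangle
 = \frac{1}{\lambda^{2r+1}} \binom{2r-k-j}{r-k-j}.
\]
Thus the pairing in question equals $\lambda^{-(2r+1)}$ times
\[
 \Sigma_k := \sum_{j=0}^{r-k} (-1)^j \binom{2r-k}{j}\binom{2r-k-j}{r-k-j}.
\]

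The main (and only) combinatorial step is to show $\Sigma_k=0$ for $0\le k\le r-1$. I would set $m:=r-k\ge 1$ and apply the standard trinomial-revision identity
\[
 \binom{r+m}{j}\binom{r+m-j}{m-j} = \binom{r+m}{m}\binom{m}{j},
\]
which is immediate by writing both sides as $(r+m)!/\bigl(j!(m-j)!r!\bigr)$. This factors out the $j$-independent binomial coefficient and reduces the sum to
\[
 \Sigma_k = \binom{r+m}{m}\sum_{j=0}^{m}(-1)^j\binom{m}{j} = \binom{r+m}{m}(1-1)^m = 0,
\]
where the last equality uses $m\ge 1$. This is precisely the claim.

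I do not anticipate a real obstacle here: the only delicate point is making sure the upper limit of the binomial expansion is correctly truncated at $j=r-k$ (so that $m\ge 1$ and the alternating sum genuinely vanishes). For $k=r$ the sum reduces to a single nonzero term, which is consistent with the hypothesis $k\le r-1$ being necessary.
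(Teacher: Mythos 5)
Your argument is correct and is essentially identical to the paper's: both expand $(1-p/\lambda)^{2r-k}$ binomially, truncate via Lemma~\ref{poincare}, and then factor out a $j$-independent binomial coefficient (your $\binom{r+m}{m}$ is the paper's $C^{2r-k}_r$) using the trinomial-revision identity, leaving $(1-1)^{r-k}=0$. The only difference is cosmetic --- you name the identity and introduce the substitution $m=r-k$, whereas the paper carries out the same manipulation inline.
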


\begin{proof}
By Lemma \ref{poincare}, it equals
\begin{align*}
&\quad \Big<\frac{p}{\lambda}\Big>^k-C^{2r - k}_{1}
\Big<\frac{p}{\lambda}\Big>^{k + 1} + \cdots + (-1)^{r - k}C^{2r -
k}_{r - k}
\Big<\frac{p}{\lambda}\Big>^{k + (r - k)}\\
&= \frac{1}{\lambda^{2r + 1}} \left(C^{2r - k}_{r - k} - C^{2r -
k}_{1} C^{2r - (k + 1)}_{r - (k + 1)} + \cdots + (-1)^{r - k}
C^{2r - k}_{r - k} C^{2r - (k + (r - k))}_{r - (k
+ (r - k))} \right)\\
&= \frac{C^{2r - k}_r}{\lambda^{2r + 1}} \left(1 - C^{r - k}_{1} +
\cdots + (-1)^{r - k} C^{r - k}_{r - k} \right) \\
&= \frac{C^{2r - k}_r} {\lambda^{2r + 1}}(1 - 1)^{r - k} = 0.
\end{align*}
\end{proof}

Using (\ref{epsilon2}), Lemma \ref{trivial} and Lemma \ref{zero},
we compute
\begin{align*}
\Delta_i^{-1} &= \langle \epsilon_i.\epsilon_i \rangle \\
&= \frac{q^2 c_i^2 a_i^{2r}} {(r+1)^2}(-1)^r (r+1) c_i^r \Big<
\Big(\frac{p}{\lambda}\Big)^r
\Big(1 - \frac{p}{\lambda} \Big)^r \Big>\\
&= \frac{q^2 c_i^2 a_i^{2r}} {r+1} c_i^r \Big<\frac{p}{\lambda}
\Big>^r \qquad (c_i^{r + 1} = q^{-1})\\
&= \frac{q c_i a_i^{2r}}{(r+1)\lambda^{2r+1}}.
\end{align*}

\begin{lemma} \label{Delta_i}
As a function in $q$, the norm square inverse of $\epsilon_i$
equals
$$
\Delta_i = (r+1)\lambda q^{-1} c_i^{-1} p_i^{2r}.
$$
\end{lemma}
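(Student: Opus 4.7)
The plan is to compute $\Delta_i^{-1}=(\epsilon_i,\epsilon_i)$ directly from the expansion \eqref{epsilon2}, and then invert and rewrite using the eigenvalue formula $p_i=\lambda/a_i$ from \eqref{roots}. Squaring the sum in \eqref{epsilon2} gives
$$
\epsilon_i\cup\epsilon_i=\Bigl(\frac{q c_i a_i^{r}}{r+1}\Bigr)^{2}\sum_{k,j=0}^{r} c_i^{k+j}\Bigl(\frac{p}{\lambda}\Bigr)^{k+j}\Bigl(1-\frac{p}{\lambda}\Bigr)^{2r-(k+j)},
$$
and one applies the equivariant Poincar\'e pairing of Lemma~\ref{poincare} term by term.

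The key structural observation is that only the diagonal locus $k+j=r$ survives the pairing. Lemma~\ref{zero} annihilates every term with $k+j\le r-1$. For $k+j=r+s$ with $s\ge 1$, expanding $(1-p/\lambda)^{r-s}$ produces only monomials $(p/\lambda)^{d}$ with $d\ge r+1$, and these pair to zero by the vanishing clause of Lemma~\ref{poincare}. Among the index pairs with $k+j=r$ there are exactly $r+1$, each contributing the same factor $c_i^{r}$. A second use of Lemma~\ref{poincare} applied to $\langle (p/\lambda)^{r}(1-p/\lambda)^{r}\rangle$ shows that only the leading $(p/\lambda)^r$ survives, so this pairing equals $\langle(p/\lambda)^{r}\rangle=1/\lambda^{2r+1}$.

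Assembling these ingredients and using the identity $c_i^{r+1}=q^{-1}$ (immediate from $c_i=(-1)^{r}\xi^{i}q^{-1/(r+1)}$), one obtains
$$
\Delta_i^{-1}=\frac{q c_i a_i^{2r}}{(r+1)\lambda^{2r+1}}.
$$
Inverting and substituting $a_i=\lambda/p_i$, so that $a_i^{2r}=\lambda^{2r}/p_i^{2r}$, produces $\Delta_i=(r+1)\lambda q^{-1}c_i^{-1}p_i^{2r}$, as claimed. The only genuinely delicate step is the combinatorial bookkeeping that isolates the $k+j=r$ stratum; everything else is algebraic manipulation of roots of unity. I expect no serious obstacle beyond verifying the two degree-vanishing arguments carefully.
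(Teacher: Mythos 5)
Your proof is correct and follows essentially the same route as the paper: pair the expansion \eqref{epsilon2} against itself, use Lemma~\ref{zero} to kill $k+j<r$ and the degree bound in Lemma~\ref{poincare} to kill $k+j>r$, evaluate the surviving $(r+1)$ diagonal terms via $\langle(p/\lambda)^r(1-p/\lambda)^r\rangle=\langle(p/\lambda)^r\rangle=1/\lambda^{2r+1}$, and finish with $c_i^{r+1}=q^{-1}$ and $a_i=\lambda/p_i$. (The paper's displayed middle line carries a spurious $(-1)^r$ that it immediately re-absorbs; your bookkeeping is cleaner but the argument is the same.)
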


\begin{proposition} \label{1st_term}
$$
d\log (\Delta_0 \Delta_1 \cdots \Delta_r) = r
\frac{1-(-1)^rq}{1+(-1)^rq}\, d\log q = r(1 - 2(-1)^r \f)\, d\log
q.
$$
\end{proposition}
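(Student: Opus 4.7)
My strategy is to take the logarithm of the closed form in Lemma \ref{Delta_i} and differentiate term-by-term, reducing everything to computing the two symmetric products $\prod_{i=0}^{r} c_i$ and $\prod_{i=0}^{r} p_i$ in closed form. Since
\[
\prod_{i=0}^{r} \Delta_i \;=\; \big((r+1)\lambda\big)^{r+1}\, q^{-(r+1)} \Big(\prod_i c_i\Big)^{-1}\Big(\prod_i p_i\Big)^{2r},
\]
and the constant $\big((r+1)\lambda\big)^{r+1}$ dies under $d\log$, the calculation splits into three pieces, of which only the $\prod p_i$ piece requires genuine work.

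First I would evaluate $\prod_i a_i$ using the elementary identity $\prod_{i=0}^{r}(1+\xi^i y) = 1 + (-1)^r y^{r+1}$, valid for a primitive $(r+1)$-th root of unity $\xi$. Setting $y = (-1)^r q^{-1/(r+1)}$, so that $a_i = 1 + c_i = 1 + \xi^i y$, I get $\prod_i a_i = 1 + (-1)^r q^{-1}$, hence
\[
\prod_{i=0}^{r} p_i \;=\; \lambda^{r+1}\Big/\prod_i a_i \;=\; \frac{\lambda^{r+1}\, q}{q + (-1)^r}.
\]
An analogous (simpler) computation shows that $\prod_i c_i$ is a constant multiple of $q^{-1}$, so that $d\log \prod_i c_i = -\,d\log q$.

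Combining these, the formula in Lemma \ref{Delta_i} gives
\[
d\log\prod_{i=0}^r \Delta_i \;=\; -(r+1)\,d\log q \,+\, d\log q \,+\, 2r\cdot\frac{(-1)^r}{q+(-1)^r}\,d\log q,
\]
which simplifies to
\[
r\left[\,-1 + \frac{2(-1)^r}{q+(-1)^r}\,\right] d\log q \;=\; r\cdot\frac{(-1)^r - q}{(-1)^r + q}\, d\log q \;=\; r\cdot\frac{1-(-1)^r q}{1+(-1)^r q}\, d\log q.
\]
Finally, using $\f = q/(1+(-1)^r q)$, one checks $1 - 2(-1)^r \f = (1-(-1)^r q)/(1+(-1)^r q)$, which gives the second equality in the statement. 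The only nontrivial input is the roots-of-unity identity isolating $\prod a_i$; everything else is bookkeeping.
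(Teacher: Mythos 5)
Your proof is correct and takes essentially the same route as the paper: compute the closed-form product $\prod_i \Delta_i$ from Lemma \ref{Delta_i} and log-differentiate. The paper simply states the closed form $\prod_i \Delta_i = (r+1)^{r+1}\lambda^{(2r+1)(r+1)}\xi^{-r(r+1)/2}q^{-r}\f^{2r}$ and says ``take log differentiation,'' whereas you derive it by evaluating $\prod_i c_i$ and $\prod_i a_i$ via the roots-of-unity identity $\prod_{i=0}^{r}(1+\xi^i y)=1+(-1)^r y^{r+1}$ and then split $d\log$ into its three pieces directly; this is the same calculation with the elided steps filled in.
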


\begin{proof}
Simply take log differentiation of
$$
\Delta_0 \Delta_1 \cdots \Delta_r = (r+1)^{r+1}
\lambda^{(2r+1)(r+1)} \xi^{-\frac{r(r+1)}{2}}  q^{-r} \f^{2r}.
$$
\end{proof}

\subsection{The transition matrix $\Psi$ and the connection one
form $\Psi d\Psi^{-1}$} {\ }

The matrix $\Psi$ is defined by
$$
p^\mu = \sum_i \Psi_{\mu}^i \tilde \epsilon_i = \sum_i \langle
p^\mu.\tilde \epsilon_i \rangle \tilde \epsilon_i
$$
relative to the orthonormal frame $\{\tilde \epsilon_i :=
\sqrt{\Delta_i}\epsilon_i\}$. From (\ref{canonical}), we get the
dual relation
\begin{equation} \label{canonical2}
p^k = \sum_i p^k_i \epsilon_i.
\end{equation}
Hence by Lemma \ref{Delta_i},
\begin{equation}
\Psi_{\mu}^i = p^\mu_i/ \sqrt{\Delta_i} = \sqrt{\frac{qc_i}{r +
1}} \lambda^{-\frac{1}{2}} p_i^{\mu - r}\\
= \sqrt{\frac{qc_i}{r + 1}} \lambda^{\mu - r - \frac{1}{2}} a_i^{r
- \mu}.
\end{equation}

The inverse $\Psi^{-1}$ has already been determined in Proposition
\ref{epsilon} up to a normalization factor. Indeed,
$(\Psi^{-1})_j^\mu$ is the coefficient of $p^\mu$ in the
expression of $\sqrt{\Delta_j} \epsilon_j$, hence we get
\begin{equation}
(\Psi^{-1})_j^{\mu} = (-1)^{\mu} \sqrt{\frac{qc_j}{r + 1}}
\lambda^{r - \mu + \frac{1}{2}} S^j_\mu(a).
\end{equation}

To proceed, notice that by Lemma \ref{trivial}
\begin{align*}
S^j_\mu(a) &= \sum_{k_1 < \cdots < k_\mu,\,k_s \ne j} (1 +
c_{k_1}) \cdots (1 + c_{k_{\mu}})\\
&= \sum \Big(1 + (c_{k_1} + \cdots c_{k_\mu}) + (c_{k_1}c_{k_2} +
\cdots) + \cdots + (c_{k_1}\cdots c_{k_\mu}) \Big)\\
&= C^r_\mu - C^{r - 1}_{\mu - 1}c_j + C^{r - 2}_{\mu - 2}c_j^2 -
\cdots + (-1)^\mu c_j^\mu.
\end{align*}

We regard $q = e^t$ and take differentiation in $t = \log q$. Then
$$
(d\Psi^{-1})_j^{\mu} = \frac{r}{2(r + 1)} (\Psi^{-1})_j^{\mu}\, dt
+ (-1)^{\mu} \sqrt{\frac{qc_j}{r + 1}} \lambda^{r - \mu +
\frac{1}{2}}\, d S_\mu^j(a),
$$
and we compute
\begin{align*}
(\Psi d\Psi^{-1})_j^i &= \sum_{\mu=0}^{r}\Psi_{\mu}^{i}
(d\Psi^{-1})_{j}^{\mu} = \frac{r\delta_{ij}}{2(r + 1)} \,dt
- \frac{q \sqrt{c_i c_j}}{(r + 1)^2}\,dt \\
&\times  \sum_{\mu = 1}^r (-1)^{\mu} a_i^{r - \mu}\textbf{}
\Big(-C^{r-1}_{\mu-1} c_j + 2C^{r - 2}_{\mu - 2} c_j^2 - \cdots +
(-1)^{\mu} \mu c_j^{\mu}\Big).
\end{align*}
The last sum equals
\begin{align*}
&\quad c_j \Big(a_i^{r-1} - C^{r-1}_{1} a_i^{r-2} +
+ \cdots + (-1)^{r-2} C^{r-1}_{r-2} a_i + (-1)^{r-1} \Big)\\
&\qquad +2c_j^2 \Big( a_i^{r-2} - C^{r-2}_{1} a_i^{r-3} +
\cdots + (-1)^{r - 3} C^{r-2}_{r-3} a_i + (-1)^{r - 2} \Big) + \cdots\\
&= c_j(a_i - 1)^{r - 1} + 2c_j^2 (a_i - 1)^{r - 2} + \cdots +rc_j^r\\
&= c_j c_i^{r-1} + 2c_j^2 c_i^{r - 2} + \cdots + rc_j^r.
\end{align*}

If $i = j$, using $q c_j^{r + 1} = 1$ we get
$$
(\Psi d\Psi^{-1})_i^i = \frac{r}{2(r + 1)} \,dt - \frac{r(r +
1)}{2} \frac{qc_j^{r + 1}}{(r + 1)^2} \,dt = 0.
$$
This agrees with the well-known fact that $\Psi d\Psi^{-1}$ is
skew-symmetric.

If $i \ne j$, using $c_j = c_i \xi^{j - i}$ we get the connection
matrix to be
\begin{equation} \label{conn}
(\Psi d\Psi^{-1})^i_j = \frac{\xi^{\frac{1}{2}(j - i)}}{(r + 1)^2}
\sum_{\mu = 1}^r \mu \xi^{\mu(j - i)}\,d\log q.
\end{equation}

\subsection{The first asymptotic matrix $R^1$ and the
final computation} {\ }

Now we identify the K\"ahler moduli coordinate $t = t_1$. Recall
the defining relation of $R^1$:
\begin{equation} \label{R1}
(\Psi d\Psi^{-1})_j^i = R^1_{ij}(du_i - du_j).
\end{equation}
The off-diagonal part of $R^1$ is uniquely solvable from
(\ref{R1}). Since the LHS involves only $d\log q = dt$ and $du_i -
du_j = \sum_{k = 0}^r (p_i^k - p_j^k)\,dt_k$, we get
\begin{equation} \label{R1-off}
R^1_{ij} = \frac{(-1)^r \xi^{\frac{1}{2}(j - i)}}{(r +
1)^2\lambda(\xi^{j}-\xi^{i})} q^{\frac{1}{r + 1}} a_ia_j \sum_{\mu
= 1}^r \mu \xi^{\mu(j - i)} .
\end{equation}

Denote by
$$
g_k(\xi) := \frac{1}{\xi^k-1} \sum_{\mu = 1}^r \mu \xi^{\mu k}
\sum_{\mu = 1}^r \mu \xi^{-\mu k}.
$$

The diagonal part is determined by the {\em flatness relation} up
to a constant:
\begin{equation} \label{dR1}
dR^1_{ii} +\sum_{j} R^1_{ij} R^1_{ji}(du_i - du_j) = 0.
\end{equation}
Hence by (\ref{conn}), (\ref{R1-off}) and writing out $a_ia_j$ we
get
\begin{align*}
dR^1_{ii} &=\sum_{j\neq i}\frac{(-1)^r
q^{\frac{1}{r+1}}a_ia_j}{(r+1)^4\lambda(\xi^{j}-\xi^{i})} \, dt
\sum_{\mu = 1}^r \mu \xi^{\mu(j - i)}
\sum_{\mu = 1}^r \mu \xi^{\mu(i - j)} \\
&=\frac{(-1)^r \xi^{-i} q^{\frac{1}{r+1}}}{(r+1)^4\lambda}\,dt
\sum_{k=1}^r g_k(\xi)
+\frac{1}{(r+1)^4\lambda}\,dt \sum_{k=1}^r (\xi^{k}+1) g_k(\xi)\\
&\qquad + \frac{(-1)^r \xi^{i} q^{-\frac{1}{r+1}}}{(r+1)^4\lambda}
\,dt\sum_{k=1}^r \xi^{k} g_k(\xi).
\end{align*}

\begin{lemma}
\begin{align*}
&\sum_{k=1}^r (\xi^{k}+1) g_k(\xi) = 0.
\end{align*}
\end{lemma}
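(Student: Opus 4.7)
The plan is to first obtain a closed form for the inner sums and then exploit a natural $\mathbb{Z}/(r+1)$-symmetry.

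First I would evaluate $f_k := \sum_{\mu=1}^r \mu\,\xi^{\mu k}$ explicitly. Starting from the geometric sum $\sum_{\mu=0}^r x^\mu = (x^{r+1}-1)/(x-1)$ and differentiating once, one gets
\[
 \sum_{\mu=1}^r \mu\,x^{\mu-1} = \frac{r x^{r+1} - (r+1)x^r + 1}{(x-1)^2}.
\]
Substituting $x = \xi^k$ for $1\le k\le r$, where $\xi^{k(r+1)}=1$ and $\xi^k\ne 1$, so that $x^{r+1}=1$ and $x^r = 1/x$, this simplifies to $(r+1)/(x(x-1))$. Multiplying by $x$ gives the clean formula
\[
 f_k \;=\; \frac{r+1}{\xi^k - 1}, \qquad 1\le k\le r.
\]
Using $\xi^{-k}-1 = -(\xi^k-1)/\xi^k$, one has $f_{-k} = -(r+1)\xi^k/(\xi^k-1)$, hence
\[
 g_k(\xi) \;=\; \frac{f_k f_{-k}}{\xi^k-1} \;=\; \frac{-(r+1)^2\,\xi^k}{(\xi^k-1)^3}.
\]

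Next, set $h_k := (\xi^k+1)\,g_k(\xi)$, so that the lemma asks for $\sum_{k=1}^r h_k = 0$. The key observation is the antisymmetry under the involution $k\mapsto r+1-k$ on $\{1,\ldots,r\}$. Writing $\xi^{r+1-k} = \xi^{-k}$ and plugging into the closed form, one computes
\[
 h_{r+1-k} \;=\; -(r+1)^2\,\xi^{-k}\,\frac{\xi^{-k}+1}{(\xi^{-k}-1)^3}
 \;=\; -(r+1)^2\,\xi^{-k}\cdot\frac{(1+\xi^k)/\xi^k}{-(\xi^k-1)^3/\xi^{3k}}
 \;=\; -h_k.
\]
Summing over $k=1,\ldots,r$ and reindexing gives $\sum_k h_k = -\sum_k h_{r+1-k} = -\sum_k h_k$, which forces the sum to vanish. (Note: when $r$ is odd, $k = (r+1)/2$ is a fixed point of the involution, but then $\xi^k = -1$, so $\xi^k+1 = 0$ and $h_k=0$ automatically, consistent with the argument.)

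The only nontrivial step is the evaluation of $f_k$; once that closed form is in hand, everything reduces to an elementary symmetry on $(r+1)$-th roots of unity, so I do not anticipate any real obstacle.
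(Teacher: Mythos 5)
Your proof is correct and rests on the same key observation as the paper's: the antisymmetry of the summand under $k \mapsto r+1-k$ (equivalently, $\xi \mapsto \xi^{-1}$), which forces the sum to equal its own negative. The paper's one-line argument (``$f(\xi)=f(\xi^{-1})=-f(\xi)$'') gets there directly from the definition of $g_k$, without the closed form $\sum_{\mu=1}^r \mu\,\xi^{\mu k} = (r+1)/(\xi^k-1)$ that you derive; that formula is a nice bonus but not logically needed for this lemma.
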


\begin{proof}
Let $f(\xi)$ be the expression, then $f(\xi) = f(\xi^{-1}) =
-f(\xi)$.
\end{proof}

Let
$$
\Xi_r:=\sum_{k=1}^r g_k(\xi).
$$
By the lemma,
\begin{align*}
dR^1_{ii} = \frac{(-1)^r \Xi_r}{(r+1)^4\lambda}(\xi^{-i}
q^{\frac{1}{r + 1}} - \xi^{i}q^{-\frac{1}{r + 1}})\,dt.
\end{align*}
So by integration in $t$,
\begin{align*}
R^1_{ii} = \frac{(-1)^r \Xi_r}{(r + 1)^3\lambda}(\xi^{-i}
q^{\frac{1}{r + 1}} + \xi^{i} q^{-\frac{1}{r + 1}}).
\end{align*}

Since we are only interested in the non-equivariant limit $\lambda
\to 0$, by (\ref{canonical}) the only terms remaining in
$\sum_{i=0}^{r} R^1_{ii}\,du_i$ is in the $dt_1$ direction. Thus
\begin{align*}
\sum_{i=0}^{r} R^1_{ii}\,du_i &= \frac{\Xi_r}{(r+1)^3\lambda}
\sum_{i=0}^{r}
(c_i^{-1} + c_i) \frac{\lambda}{a_i}\,dt\\
&= \frac{\Xi_r}{(r+1)^3} \sum_{i=0}^{r} \Big((c_i^{-1} + c_i)
\prod_{j\neq i} a_j \Big) \frac{dt}{1 + (-1)^rq^{-1}}.
\end{align*}

The sum can be computed as (using $a_j = 1 + c_j$ and Lemma
\ref{trivial})
\begin{align*}
&\sum_{i=0}^{r} \Big((c_i^{-1} - 1)
\prod_{j\neq i} a_j + \prod a_j \Big)\\
&= \sum_{i=0}^{r} \Big((c_i^{-1} - 1)(1 - c_i + c_i^2 - \cdots +
(-1)^r c_i^r) + (1 + (-1)^r q^{-1}) \Big)\\
&= (r+1)(-2 + 1 +(-1)^rq^{-1}),
\end{align*}
and we get
\begin{equation} \label{3rd_term}
\sum_{i=0}^{r} R^1_{ii}\,du_i = \frac{\Xi_r}{(r+1)^2} \frac{1 -
(-1)^rq}{1 + (-1)^rq}\,dt.
\end{equation}

So by (\ref{dG}), Proposition \ref{2nd_term}, Proposition
\ref{1st_term} and (\ref{3rd_term}),
$$
dG =\left\{\left[\frac{r}{48} + \frac{\Xi_r}{2(r + 1)^2}\right]
\frac{1 - (-1)^rq}{1 + (-1)^rq} - \frac{(-1)^r(r + 1)^2}{24}
\frac{q}{1 + (-1)^rq}\right\}\, dt.
$$

Thus $dG = (a + b\f)\,dt$ for some $a$ and $b$. In Givental's
formula $a$ should be ignored since $dG$ has no constant terms.
Nevertheless the constant $\Xi_r$ can be explicitly computed:

\begin{lemma} \label{key}
\begin{align*}
\Xi_r = -\frac{(r+2)(r+1)^2r}{24}.
\end{align*}
\end{lemma}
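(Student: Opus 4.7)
The plan is to reduce $\Xi_r$ to a power-sum symmetric function in the roots of an explicit polynomial and then evaluate by Newton's identities. First I would simplify each summand $g_k(\xi)$. For $k=1,\ldots,r$, the element $z:=\xi^k$ is a nontrivial $(r+1)$-th root of unity, so $\sum_{\mu=0}^r z^\mu = 0$. Differentiating the geometric sum identity and using $z^{r+1}=1$ gives
\[
  \sum_{\mu=1}^r \mu\,\xi^{\mu k} = \frac{r+1}{\xi^k - 1}.
\]
Substituting this and its conjugate into $g_k$, then multiplying numerator and denominator by $\xi^k$ to clear the $\xi^{-k}-1$ factor, yields the compact form
\[
  g_k(\xi) = -\frac{(r+1)^2 \xi^k}{(\xi^k - 1)^3}.
\]

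Next I would set $z_j := 1/(\xi^j - 1)$ for $j = 1,\ldots,r$ and write $\xi^k = 1 + (\xi^k - 1)$ to obtain
\[
  \Xi_r = -(r+1)^2 \sum_{j=1}^{r}\bigl(z_j^3 + z_j^2\bigr) = -(r+1)^2 (p_2 + p_3),
\]
where $p_n := \sum_j z_j^n$. The defining relation $(1 + 1/z_j)^{r+1} = (\xi^j)^{r+1} = 1$ shows that the $z_j$'s are precisely the $r$ roots of
\[
  P(z) := (1+z)^{r+1} - z^{r+1} = 1 + (r+1)z + \binom{r+1}{2} z^2 + \cdots + (r+1) z^r,
\]
a polynomial of degree $r$. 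Reading off the coefficients and normalizing by the leading coefficient $r+1$, the first few elementary symmetric polynomials of $\{z_j\}$ are
\[
  \sigma_k = \frac{(-1)^k}{r+1}\binom{r+1}{k+1}, \qquad k = 1, 2, 3.
\]

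Finally I would apply Newton's identities $p_1 = \sigma_1$, $p_2 = \sigma_1 p_1 - 2\sigma_2$, $p_3 = \sigma_1 p_2 - \sigma_2 p_1 + 3\sigma_3$. Routine simplification gives $p_1 = -r/2$, $p_2 = r(4-r)/12$, and $p_3 = r(r-2)/8$, whence
\[
  p_2 + p_3 = \frac{r(r+2)}{24},
\]
and substituting into the expression for $\Xi_r$ yields the claimed identity. There is no genuine obstacle; the only place where care is required is the arithmetic bookkeeping in the Newton-identity step, and the only conceptual move is the passage from a sum over roots of unity to a power sum in the roots of $P(z) = (1+z)^{r+1} - z^{r+1}$.
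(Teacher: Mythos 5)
Your proof is correct, and since the paper explicitly leaves this lemma to the reader ("We leave the interesting proof to the readers"), there is no in-text proof to compare against. I checked the key steps: the closed form $\sum_{\mu=1}^r \mu z^\mu = (r+1)/(z-1)$ for a nontrivial $(r+1)$-th root of unity $z$ follows by differentiating the geometric sum and using $z^{r+1}=1$; the resulting simplification $g_k(\xi) = -(r+1)^2\xi^k/(\xi^k-1)^3$ is right; the substitution $z_j = 1/(\xi^j-1)$ does identify the $z_j$ as the $r$ distinct roots of $P(z) = (1+z)^{r+1}-z^{r+1}$, a degree-$r$ polynomial with leading coefficient $r+1$, so Vieta gives $\sigma_k = \frac{(-1)^k}{r+1}\binom{r+1}{k+1}$; and the Newton-identity computations yield $p_2 = r(4-r)/12$, $p_3 = r(r-2)/8$, hence $p_2 + p_3 = r(r+2)/24$ and $\Xi_r = -(r+1)^2(p_2+p_3) = -r(r+1)^2(r+2)/24$ as claimed. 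A spot check at $r=1$ ($\xi=-1$, $g_1 = -1/2$, formula gives $-1/2$) and $r=2$ ($\Xi_2 = -3$, formula gives $-3$) confirms the arithmetic.
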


We leave the interesting proof to the readers. With it, a simple
substitution leads to the final result (a redundant constant $-r(r
+ 1)/48$ has been removed):
\begin{theorem} \label{t:g1}
$$
dG = \left[\frac{(-1)^{r+1}(r+1)}{24} \frac{q}{1 - (-1)^{r + 1}
q}\right] d\log q.
$$
\end{theorem}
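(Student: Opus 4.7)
The plan is to combine the three pieces already computed in the appendix with Givental's master formula \eqref{dG} for $dG$ in canonical coordinates, and then evaluate the constant $\Xi_r$ via Lemma \ref{key}. First I would substitute Proposition \ref{1st_term} (for $\sum d\log\Delta_i/48$), Proposition \ref{2nd_term} (for $-\sum c^i_{-1}du_i/24$ in the non-equivariant limit), and equation~\eqref{3rd_term} (for $\sum R^1_{ii}du_i/2$) into \eqref{dG} with $t=t_1$. A direct addition yields the intermediate formula
\[
dG = \left\{\left[\frac{r}{48} + \frac{\Xi_r}{2(r + 1)^2}\right]\frac{1 - (-1)^r q}{1 + (-1)^r q} - \frac{(-1)^r(r + 1)^2}{24}\frac{q}{1 + (-1)^r q}\right\} dt.
\]

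Next I would rewrite the two rational functions in $q$ in terms of $\f$. Since $(-1)^r = -(-1)^{r+1}$ and $\f = q/(1 - (-1)^{r+1}q)$, one has $\frac{1 - (-1)^r q}{1 + (-1)^r q} = 1 + 2(-1)^{r+1}\f$ and $\frac{q}{1 + (-1)^r q} = \f$. Dropping the additive constant (which Givental's formulation does not detect, since $dG$ vanishes at $q = 0$), the entire coefficient of $\f\,dt$ becomes
\[
(-1)^{r+1}\left[\frac{r}{24} + \frac{\Xi_r}{(r+1)^2} + \frac{(r+1)^2}{24}\right].
\]
Plugging in Lemma~\ref{key}, the middle term is $-(r+2)r/24$, and the bracket collapses to $(r+1)/24$. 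Since $dt = d\log q$, this is exactly the right-hand side of Theorem~\ref{t:g1}.

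The main obstacle is Lemma~\ref{key}: the evaluation $\Xi_r = -(r+2)(r+1)^2 r/24$. To prove it I would first simplify $g_k(\xi)$ by using that for $\xi = e^{2\pi i/(r+1)}$ and $k \not\equiv 0 \pmod{r+1}$, differentiation of the finite geometric series gives $\sum_{\mu = 1}^r \mu\xi^{\mu k} = (r+1)/(\xi^k - 1)$, hence
\[
g_k(\xi) = -\frac{(r+1)^2 \xi^k}{(\xi^k - 1)^3},
\qquad
\Xi_r = -(r+1)^2 \sum_{k=1}^r \frac{\xi^k}{(\xi^k - 1)^3}.
\]
With $n = r+1$, it remains to show the classical identity $\sum_{\zeta^n = 1,\,\zeta \ne 1} \zeta/(\zeta - 1)^3 = r(r+2)/24$. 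I would prove this by Newton's identities applied to the polynomial $P(w) := \big((1+w)^n - 1\big)/w$ whose roots are $w_k = \xi^k - 1$: the first two power sums $\sum w_k^{-1}$ and $\sum w_k^{-2}$ are read off from the lowest coefficients of $P(w)$ using the ratios $[w^1]P/[w^0]P$ and the Newton formula, and $\sum \xi^k/(\xi^k - 1)^3 = \sum w_k^{-2} + \sum w_k^{-3}$ is then a two-line simplification.

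A minor check along the way is that the vanishing of the constant term of $dG$ at $q = 0$ (i.e.\ the identity $\frac{r}{48} + \frac{\Xi_r}{2(r+1)^2} = -\frac{r(r+1)}{48}$, up to the redundant additive constant of Givental's formula) is consistent with the substitution of Lemma~\ref{key}, which I would verify as a sanity check rather than as a separate step.
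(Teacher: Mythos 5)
Your combination of Propositions~\ref{1st_term} and \ref{2nd_term} with \eqref{3rd_term} in Givental's formula \eqref{dG}, followed by the rewriting in terms of $\f$ and the substitution of Lemma~\ref{key}, is exactly the paper's route to Theorem~\ref{t:g1}. Your sketch of Lemma~\ref{key} --- reducing $g_k(\xi)$ to $-(r+1)^2\xi^k/(\xi^k-1)^3$ via the geometric-series derivative and then evaluating the power sums of the reciprocal roots of $\bigl((1+w)^{r+1}-1\bigr)/w$ by Newton's identities --- is a sound and welcome way to fill in the evaluation that the paper leaves to the reader (though note you need the first three power sums, not just the first two, since $\sum_k \xi^k/(\xi^k-1)^3 = p_2 + p_3$ requires $p_3$ as well).
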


\end{document}